\newtheorem{theorem}{Theorem}[section]
\newtheorem{corollary}{Corollary}[section]
\newtheorem{lemma}{Lemma}[section]
\newtheorem{remark}{Remark}[section]
\def\ltt{\lesssim}
\def\gtt{\gtrsim}
\newcommand{{\Reals}}{\mathbb{R}}
\def\bsigma{\boldsymbol\sigma}
\numberwithin{equation}{section}
\date{\empty}
\begin{document}

% % Title and author(s)
%%%%%%%%%%%%%%%%%%%%%%%%%%%%%%%%%%%%%%%%%%%%%%%%%%%%%%%
\title{A non-iterative domain decomposition method for the interaction between a fluid and a thick structure}
\author{
{Anyastassia Seboldt}
\thanks{Department of Applied and Computational Mathematics and Statistics, University
of Notre Dame, Notre Dame, IN 46556, USA. Email: {aseboldt@nd.edu}. }
\and 
{Martina Buka\v{c}}
\thanks
{Department of Applied and Computational Mathematics and Statistics, University
of Notre Dame, Notre Dame, IN 46556, USA. Email: {mbukac@nd.edu}. (Corresponding author.)}
}

% \date{}

\maketitle

%  \keywords{fluid-structure interaction, thick structure, non-iterative method, domain-decomposition}

  \begin{abstract}
  This work focuses on the development and analysis of a partitioned numerical method for moving domain, fluid-structure interaction problems. We model the fluid using incompressible Navier-Stokes equations, and the structure using linear elasticity equations. We assume  that the structure is thick, i.e., described in the same dimension as the fluid.  
We propose a non-iterative, domain decomposition method where the fluid and the structure sub-problems are solved separately. The method is based on generalized Robin boundary conditions, which are used in both fluid and structure sub-problems. Using energy estimates, we show that the proposed method applied to a  moving domain problem is unconditionally stable. We also analyze the convergence of the method and show $\mathcal{O}(\Delta t^\frac12)$ convergence in time and optimal convergence in space. Numerical examples are used to demonstrate the performance of the method. In particular, we explore the relation between the combination parameter used in the derivation of the generalized Robin boundary conditions and the accuracy of the scheme. We also compare the performance of the method to a  monolithic solver. 
  \end{abstract}

\section{Introduction}

Fluid-structure interaction (FSI) problems arise in many applications, such as aerodynamics, hemodynamics and geomechanics. They are used to predict flow properties in patient-specific arterial geometries, microfluidic devices and in the design of many industrial components.   
FSI problems are moving domain problems,  characterized by highly non-linear coupling between fluid flow and structure deformation. As a result, the development of robust numerical algorithms is a subject of intensive research.

The solution strategies for FSI problems can be classified as monolithic and  partitioned
methods. In monolithic algorithms~\cite{bazilevs2008isogeometric,deparis2003acceleration,gerbeau2003quasi,nobile2001numerical,gee2011truly,ryzhakov2010monolithic,hron2006monolithic,bathe2004finite}, the coupling conditions are imposed implicitly and the entire coupled problem is solved as one system of algebraic equations. However, they may require  long computational time, large memory allocation and
 well-designed preconditioners~\cite{gee2011truly,badia2008modular,heil2008solvers}.
In partitioned methods~\cite{degroote2008stability,farhat2006provably,bukavc2012fluid,badia2009robin,BorSunMulti,Fernandez2012incremental,fernandez2013fully,nobile2008effective,hansbo2005nitsche,lukavcova2013kinematic,banks2014analysis,banks2014analysis2,oyekole2018second,bukavc2016stability}, the fluid flow and structure deformation are solved separately as smaller and better conditioned sub-problems, which reduces the computational cost.
However, they often suffer from numerical instabilities, which makes the design and analysis of stable and efficient partitioned schemes
challenging even for simplified, linear problems.

 The design of partitioned algorithms is especially challenging in blood flow applications due to numerical instabilities known as \textit{the added mass effect}~\cite{causin2005added}, which are manifested when the fluid and structure have comparable densities. Furthermore, design of non-iterative, partitioned methods is particularly difficult when the dimension of the solid domain is the same as the dimension of the fluid domain. When the structure is thin, i.e., described by a lower-dimensional model, it serves as a fluid-structure interface with mass, which is exploited in the design of many partitioned methods~\cite{oyekole2018second,bukavc2016stability,Fernandez2012incremental,lukavcova2013kinematic} where parts of the structure equation are used as a Robin boundary condition for the fluid problem. However, when the structure is thick, no additional mass is present at the fluid-structure interface, which makes the design of stable, non-iterative partitioned algorithms especially challenging. 
 
It is well-known that classical, Dirichlet-Neumann partitioned methods are unconditionally unstable when fluid and structure have comparable densities~\cite{causin2005added}, which can be resolved by  sub-iterating between fluid and structure sub-problems within each time step. As an alternative to the Dirichlet-Neumann approach, which can exhibit convergence issues, Robin-Dirichlet, Robin-Neumann, or Robin-Robin  methods were designed in~\cite{nobile2008effective,badia2009robin,badia2008fluid,gerardo2010analysis,degroote2011similarity}. In the design of these methods,  the coupling conditions are linearly combined to obtain the generalized Robin interface conditions, which are then used in the fluid and/or structure  sub-problems. 
We also mention the fictitious-pressure and fictitious-mass algorithms proposed in~\cite{baek2012convergence,yu2013generalized}, in which  the added mass effect is accounted for by incorporating additional terms into governing equations. However, algorithms proposed in~\cite{nobile2008effective,badia2009robin,badia2008fluid,gerardo2010analysis,degroote2011similarity,baek2012convergence,yu2013generalized} still require sub-iterations between the fluid and the structure sub-problems in order to achieve stability.

A different partitioned scheme  was proposed in~\cite{burman2009stabilization,burman2013unfitted}, where the fluid-structure coupling conditions are imposed using Nitsche's penalty method~\cite{hansbo2005nitsche} and some terms are time-lagged to uncouple the fluid and solid sub-problems. It was shown that the scheme is stable under a CFL condition if a weakly consistent stabilization term
that includes pressure variations at the interface is added. The authors show that the rate of convergence in time is sub-optimal, which is then corrected by proposing a few defect-correction sub-iterations. A non-iterative, partitioned algorithm based on the so-called added-mass partitioned Robin conditions was proposed  in~\cite{banks2014analysis2}.  It was shown that the algorithm is stable  under a condition on  the time step, which depends on the structure parameters. Even though the authors do not derive the convergence rates, their numerical results indicate that the scheme is second-order accurate in time. 
A generalized Robin-Neumann explicit coupling scheme based on an interface operator accounting for the
solid inertial effects within the fluid has been proposed in~\cite{fernandez2015generalized}. The scheme has been analyzed on a linear FSI problem and shown to be stable under a time-step condition. 
In our previous work~\cite{bukavc2014modular}, we developed a partitioned scheme for FSI with a thick, linearly viscoelastic structure based on an operator-splitting approach. However, the assumption that the structure is viscoelastic was necessary in the derivation of the scheme, and the solid viscosity was solved implicitly with the fluid problem. Furthermore, the scheme was shown to be stable only under a condition on the time step~\cite{bukavc2016stability}.

In this work, we propose a partitioned, loosely-coupled  method for FSI problems with thick structures. As opposed to the previous work, the method presented here is unconditionally stable, and  sub-iterations or stabilization terms are not needed to achieve stability. 
Furthermore,  a moving domain problem  was considered in the stability analysis. 
The fluid is modeled using the Navier-Stokes equations for an incompressible, viscous fluid, and the structure using the equations of linear elasticity. The deformation of the fluid mesh is treated using the Arbitrary Lagrangian-Eulerian approach (ALE)~\cite{hughes1981lagrangian,donea1983arbitrary,nobile2001numerical}, where the fluid mesh is allowed to deform matching the deformation of the structural domain. 
The proposed partitioned method is based on generalized Robin boundary conditions, which are formulated in a novel way. Unconditional stability is shown  on a  moving domain, semi-discrete problem using energy estimates. The proposed method is discretized in space and implemented using the finite element method. We  preform error analysis of the fully discrete method on a linearized problem and show that the scheme exhibits $\mathcal{O}(\Delta t^\frac12)$ convergence in time and optimal convergence in space. The relation between the combination parameter used in the formulation of generalized Robin boundary conditions and the accuracy of the method is explored in the numerical examples. We also compare our method to an implicit scheme on a benchmark problem under realistic parameters in blood flow modeling. 

This paper is organized as follows. The non-linear FSI problem is presented in Section 2, and the proposed numerical scheme is presented in Section 3. Stability analysis
is performed in Section 4 and error analysis is performed in Section 5. Numerical examples are presented in Section 6. Conclusions are drawn in Section 7.

%The main novelty of this work is a formulation of a non-iterative, partitioned  method for FSI problems with thick structures  which is unconditionally stable. In addition, fully non-linear, moving boundary problem has been used in the stability analysis. 

%Another difficulty in FSI problems stems from the movement of the fluid domain. 
%Problems emerging from solid mechanics are typically described in Lagrangian framework, in terms of
% the material  coordinates.
% Traditionally, the fluid dynamics problems are described 
%in an Eulerian frame of reference obtaining solutions in mesh points that are fixed in space.
%Although this approach works well for a geometrically fixed fluid domain, difficulties arise when
%the fluid domain changes shape. A common choice for treating FSI problems consists of using the Arbitrary Lagrangian-Eulerian approach (ALE)~\cite{hughes1981lagrangian,donea1983arbitrary,nobile2001numerical}, where the fluid mesh is allowed to deform matching the deformation of the structural domain. This approach works well when structural displacements are small. 
 %, which is important for accurate d escription of the coupling conditions.  

\section{Mathematical model}

We are interested in modeling fluid flow in a deformable channel, where the channel walls represent an elastic structure. We assume that the fluid is viscous and incompressible, that the structure is linearly elastic, and that the fluid and structure are both described in two-dimensional domains. The fluid and structure are two-ways coupled, resulting in a non-linear, moving domain problem.

\subsection{Computational domains and mappings}

We denote the reference fluid domain by $\hat{\Omega}_F$ and the reference structure domain by $\hat{\Omega}_S$ (see Figure~\ref{domain}).  The fluid and structure domains at time $t$ are denoted by $\Omega_F(t)$ and $\Omega_S(t)$, respectively. 
\begin{figure}[ht]
\centering{
\includegraphics[scale=0.6]{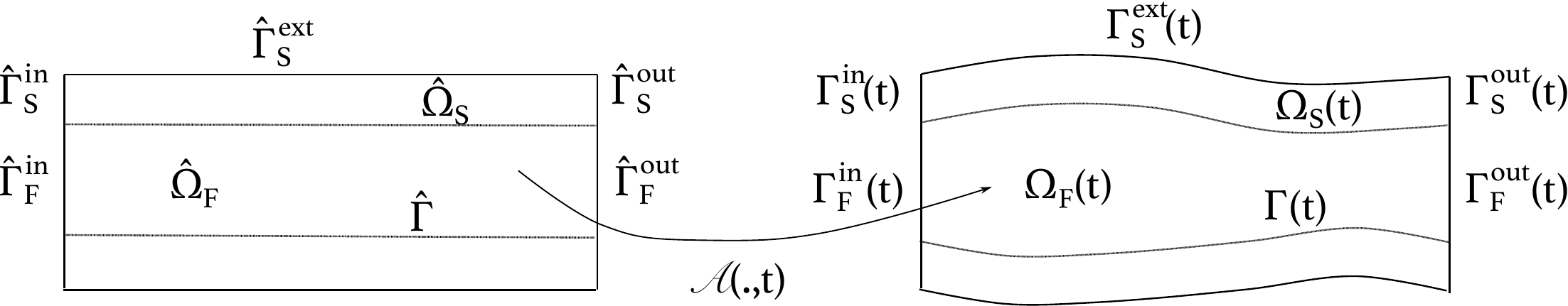}
}
\caption{Left: Reference domain $\hat{\Omega}_F \cup \hat{\Omega}_S$. Right: Deformed domain $\Omega_F(t) \cup \Omega_S(t).$}
\label{domain}
\end{figure}

We assume that the structure equations are given in a Lagrangian framework, with respect to the reference domain $\hat{\Omega}_S$.
The fluid equations will be described in the ALE formulation. To track the deformation of the fluid domain in time, we introduce a smooth, invertible, ALE mapping ${\mathcal{A}}: \hat{\Omega}_F \times [0,T] \rightarrow \Omega_F(t)$ given by
\begin{equation*}
{\mathcal{A}} ({\boldsymbol X},t)=  {\boldsymbol X} + {\boldsymbol \eta}_F({\boldsymbol X},t), \quad \; \textrm{for all } {\boldsymbol X} \in \hat{\Omega}_F, t \in [0,T],
\label{ale}
\end{equation*}
where ${\boldsymbol \eta}_F$ denotes the  displacement of the fluid domain. We assume that ${\boldsymbol \eta}_F$  equals the structure displacement on $\hat\Gamma$, and is arbitrarily extended into the fluid domain $\hat{\Omega}_F$~\cite{langer2018numerical}.
We denote the fluid deformation gradient by ${\boldsymbol F} = {\nabla}\mathcal{A}$ and its determinant by ${J}$.

\subsection{Fluid sub-problem}
To model the fluid flow, we use the Navier-Stokes equations in the ALE formulation~\cite{langer2018numerical,multilayered,thick}, given as follows:
\begin{align}\label{NSale1}
& \rho_F \left(  \partial_t \boldsymbol{v} |_{\hat{\Omega}_F}+ (\boldsymbol{v}-\boldsymbol{w}) \cdot \nabla \boldsymbol{v} \right)  = \nabla \cdot \boldsymbol\sigma_F(\boldsymbol v, p) + \boldsymbol f_F&  \textrm{in}\; \Omega_F(t)\times(0,T), \\
 \label{NSale2}
&\nabla \cdot \boldsymbol{v} = 0 & \textrm{in}\; \Omega_F(t)\times(0,T),
\end{align}
where $\boldsymbol{v}$ is the fluid velocity, $\boldsymbol w= \partial_t \boldsymbol x|_{\hat{\Omega}_F} = \partial_t \mathcal{A} \circ \mathcal{A}^{-1}$ is the domain velocity, $\rho_F$ is the fluid density,  $\boldsymbol\sigma_F$ is the fluid stress tensor and $\boldsymbol f_F$ is the forcing term. 
For a Newtonian fluid, the stress tensor is given by $\boldsymbol\sigma_F(\boldsymbol v,p) = -p \boldsymbol{I} + 2 \mu_F \boldsymbol{D}(\boldsymbol{v}),$ where  $p$ is the fluid pressure,
$\mu_F$ is the fluid viscosity and  $\boldsymbol{D}(\boldsymbol{v}) = (\nabla \boldsymbol{v}+(\nabla \boldsymbol{v})^{T})/2$ is the strain rate tensor. Notation $ \partial_t \boldsymbol{v} |_{\hat{\Omega}_F}$ denotes  the Eulerian description of the ALE field $\partial_t {\boldsymbol{v}} \circ \mathcal{A}$~\cite{formaggia2010cardiovascular}, {\emph i.e.},
\begin{equation*}
 \partial_t \boldsymbol{v}(\boldsymbol x,t) |_{\hat{\Omega}_F} = \partial_t \boldsymbol{v}(\mathcal{A}^{-1}(\boldsymbol x,t),t).
\end{equation*}

We denote the inlet and outlet  of the fluid domain by  $\Gamma_F^{in}(t)$ and $\Gamma_F^{out}(t)$, respectively.
At the inlet and outlet sections, we prescribe Neumann boundary conditions:
\begin{align}
& \boldsymbol {\sigma}_F \boldsymbol{ n}_{F} = -p_{in} (t) \boldsymbol{n}_{F}  &  \textrm{on} \; \Gamma_{F}^{in}(t) \times (0,T), \label{inlet}  \\
& \boldsymbol {\sigma}_F \boldsymbol{ n}_{F} = -p_{out}(t)  \boldsymbol{n}_{F}  &  \textrm{on} \; \Gamma_{F}^{out}(t) \times (0,T), \label{outlet} 
\end{align}
where $\boldsymbol n_F$ is the outward unit normal to the deformed fluid domain.
%Even though not physiologically optimal, these boundary conditions are common in blood flow simulations~\cite{badia2008fluid, miller2005computational, nobile2001numerical}.
We will also consider the dynamic pressure inlet and outlet data:
\begin{align}\
& \displaystyle{p+\frac{\rho_F}{2}|\boldsymbol v|^2}=p_{in}(t) &  \textrm{on} \; \Gamma_{F}^{in}(t) \times (0,T), \label{inlet2} \\
& \displaystyle{p+\frac{\rho_F}{2}|\boldsymbol v|^2}=p_{out}(t) &  \textrm{on} \; \Gamma_{F}^{out}(t) \times (0,T), \label{outlet2} \\
& \boldsymbol v \times \boldsymbol{n}_F = 0  &  \textrm{on} \; \Gamma_{F}^{in}(t) \cup \Gamma_F^{out}(t) \times (0,T). \label{dp3} 
\end{align}
Here, the fluid flow is driven by a prescribed dynamic pressure drop, 
and the flow enters and leaves the fluid domain orthogonally to the inlet and outlet boundary. 
While  Neumann boundary conditions~\eqref{inlet}-\eqref{outlet} are more convenient to use in numerical simulations, dynamic pressure boundary conditions~\eqref{inlet2}-\eqref{dp3} are used to derive the energy estimates of the fluid problem in a moving domain and in the stability analysis.

\subsection{Structure sub-problem}

To model the elastic structure, we use the elastodynamics equations written in the first order form as
\begin{align}
& \partial_{t} {\boldsymbol \eta}  =  \boldsymbol \xi &  \textrm{in}\; \hat{\Omega}_S\times(0,T), 
\\
&{\rho}_S \partial_{t} {\boldsymbol \xi}  =  {\nabla} \cdot \boldsymbol \sigma_S(\boldsymbol \eta) + \boldsymbol f_S&  \textrm{in}\; \hat{\Omega}_S\times(0,T), 
\end{align}
where $\boldsymbol{\eta}$ is the structure displacement,   $\boldsymbol{\xi}$ is the structure velocity, ${\rho}_S$ is the structure density, ${\boldsymbol \sigma_S}$ is the solid stress tensor and $\boldsymbol f_S$ is the volume force applied to the structure. We assume that the deformations are small and use the Saint-Venant Kirchhoff elastic model, given as
\begin{align*}
 \boldsymbol \sigma_S(\boldsymbol \eta) = 2 \mu_S \boldsymbol D(\boldsymbol \eta) + \lambda_S (\nabla \cdot \boldsymbol \eta) \boldsymbol I,
\end{align*}
where $\mu_S$ and $\lambda_S$ are Lam\'e constants.
We assume that the structure is fixed at the inlet and outlet boundaries:
\begin{equation}\label{homostructure1}
 {\boldsymbol \eta} = 0 \quad \textrm{on} \;\; \hat{\Gamma}_S^{in} \cup \hat{\Gamma}_S^{out} \times(0,T),
\end{equation}
and that the external structure boundary $\hat{\Gamma}_S^{ext}$ 
is exposed to zero external ambient pressure:
\begin{equation}\label{homostructure2}
{\boldsymbol \sigma_S}  {\boldsymbol{ n}}_S =  0 \quad \textrm{on}  \;\; \hat{\Gamma}_S^{ext} \times (0,T),
 \end{equation}
 where ${\boldsymbol n}_S$ is the outward normal to the reference structure domain.

\subsection{The  coupled FSI problem}
To couple the fluid and structure sub-problems, we prescribe the kinematic and dynamic coupling conditions~\cite{langer2018numerical,multilayered} given as follows:

\noindent \textbf{Kinematic coupling condition} describes the continuity of  velocity at the fluid-structure interface (no-slip):
\begin{equation}
{\boldsymbol{v}} \circ \mathcal{A}= {\boldsymbol \xi}  \;\; \; \textrm{on} \; \hat{\Gamma} \times (0,T). \label{kinematic}
\end{equation}  

\noindent \textbf{Dynamic coupling condition} describes the continuity of stresses at the fluid-structure interface due to the action-reaction principle. The condition reads:
\begin{equation}
 {J} \boldsymbol \sigma_F \boldsymbol{F}^{-T} \boldsymbol n_F +  {\boldsymbol \sigma_S } {\boldsymbol n}_S=0 \;\;\; \textrm{on} \; \hat\Gamma \times (0,T). \label{dynamic}
 \end{equation}
Hence, the fully-coupled fluid-structure interaction problem is given by:
\begin{align}\label{fsi1}
& \rho_F \left( \partial_t \boldsymbol{v} |_{\hat{\Omega}_F}+ (\boldsymbol{v}-\boldsymbol{w}) \cdot \nabla \boldsymbol{v} \right)  = \nabla \cdot \boldsymbol\sigma_F(\boldsymbol v, p) &  \textrm{in}\; \Omega_F(t)\times(0,T), \\
&\nabla \cdot \boldsymbol{v} = 0 & \textrm{in}\; \Omega_F(t)\times(0,T), 
\label{fsi11}\\
& \partial_{t} {\boldsymbol \eta}  =  \boldsymbol \xi &  \textrm{in}\; \hat{\Omega}_S\times(0,T), 
\\
&{\rho}_S \partial_{t} {\boldsymbol \xi}  =  {\nabla} \cdot \boldsymbol \sigma_S(\boldsymbol \eta) &  \textrm{in}\; \hat{\Omega}_S\times(0,T), 
\label{fsi12}
\\
& {\boldsymbol{v}} \circ \mathcal{A}= {\boldsymbol \xi} & \textrm{on} \; \hat{\Gamma} \times (0,T), \label{coupling_noslip}\\
&   {J} \boldsymbol \sigma_F \boldsymbol{F}^{-T} \boldsymbol n_F+  {\boldsymbol \sigma_S } {\boldsymbol n}_S =0 & \textrm{on} \; \hat\Gamma \times (0,T).\label{fsi2}
\end{align}
To update the fluid domain, we extend the solid displacement at the interface  using the harmonic extension, which is a common choice of the extension operator~\cite{badia}.
The fluid domain and domain velocity are determined, respectively, by
\begin{gather*}
%\mathcal{A}({\boldsymbol X},t) = {\boldsymbol X} + \boldsymbol \eta_F(\boldsymbol X, t), 
%\quad
\Omega_F(t) = \mathcal{A}(\hat{\Omega}_F, t),
\quad
\boldsymbol w = \partial_t  \mathcal{A} \circ \mathcal{A}^{-1}.
\end{gather*}
%where $\textrm{Ext}({\boldsymbol \eta} ({\boldsymbol X},t)|_{\hat{\Gamma}})$ is an arbitrary extension of the solid displacement at the interface onto the fluid domain. 

Initially, the fluid and the structure are assumed to be at rest, with zero displacement from the reference configuration.

\subsection{The weak formulation of the coupled problem}
Given an open set $S$, we  consider the usual Sobolev spaces $H^k(S)$, with $k \geq 0$. 
%and the closed subspace $H^1_0 (S)$, consisting of functions in $H^1(S)$ with zero trace on $\partial S$. 
For all $t \in [0,T]$ we introduce the following 
functional spaces:
\begin{align*}
&V^F(t) =\left\{ \boldsymbol \phi: \Omega_F(t) \rightarrow \mathbb{R}^2 \ | \ \boldsymbol \phi = \hat{\boldsymbol \phi} \circ \mathcal{A}^{-1}, \  \hat{\boldsymbol \phi} \in (H^1(\hat{\Omega}_F))^2   \right\},  \\
&V^{F,0}(t) =\left\{ \boldsymbol \phi \in V^F(t) \ | \  \boldsymbol \phi \times \boldsymbol n =0  \; \; \textrm{on} \; \Gamma_{F}^{in} \cup \Gamma_F^{out} \right\},  \\
&  Q^F(t)=  \left\{  \psi: \Omega_F(t) \rightarrow \mathbb{R} \ | \ \psi = \hat{ \psi} \circ \mathcal{A}^{-1}, \  \hat{ \psi} \in L^2(\hat{\Omega}_F) \right\},  \\
& V^S = \left\{ {\boldsymbol {\zeta}}: \hat{\Omega}_S \rightarrow \mathbb{R}^2 \ | \ {\boldsymbol {\zeta}} \in (H^1(\hat{\Omega}_S))^2 , \; {\boldsymbol {\zeta}}=0 \; \textrm{on} \; \hat{\Gamma}_S^{in} \cup \hat{\Gamma}_S^{out} \right\}, \\
& V^{FSI}(t) =  \left\{ (\boldsymbol \phi, {\boldsymbol \zeta}) \in V^{F,0}(t) \times V^S \ | \ \boldsymbol \phi  = {\boldsymbol \zeta} \circ \mathcal{A}^{-1} \; \textrm{on} \; \Gamma(t) \right\}.
\end{align*}
We define the following bilinear forms associated with the fluid and structure problems:
\begin{align}
&a_F(\boldsymbol v, \boldsymbol \phi) = 2 \mu_F \int_{\Omega_F(t)} \boldsymbol D(\boldsymbol v) : \boldsymbol D(\boldsymbol \phi) d \boldsymbol x, \quad  \forall \boldsymbol v, \boldsymbol \phi \in V^F(t),
\\
&b_F(\boldsymbol v,  \psi) =  \int_{\Omega_F(t)} \nabla \cdot \boldsymbol v \psi d \boldsymbol x, \quad  \forall \boldsymbol v \in V^F(t), \psi \in Q^F(t),
\\
&a_S(\boldsymbol \eta, \boldsymbol \zeta) = 2 \mu_S \int_{\hat{\Omega}_S} \boldsymbol D(\boldsymbol \eta) : \boldsymbol D(\boldsymbol \zeta) d \boldsymbol x+ \lambda_S \int_{\hat{\Omega}_S} (\nabla \cdot \boldsymbol \eta)(\nabla \cdot \boldsymbol \zeta) d \boldsymbol x, \quad   \forall \boldsymbol \eta, \boldsymbol \zeta \in V^S.
\end{align}
We also define norm $\|  \cdot \|_S$ associated with the bilinear form $a_S(\cdot, \cdot)$ as
\begin{gather}
\| \boldsymbol \eta \|_S = \left(a_S(\boldsymbol \eta,\boldsymbol \eta)\right)^{\frac12}.
\end{gather}

The weak formulation of the coupled fluid-structure interaction problem~\eqref{fsi1}-\eqref{fsi2} with boundary conditions~\eqref{inlet2}-\eqref{dp3} and~\eqref{homostructure1}-\eqref{homostructure2} is given as follows: Find $(\boldsymbol v, \boldsymbol \xi) \in V^{FSI}(t), p \in Q^F(t)$ and $ \boldsymbol \eta \in V^S$ such that $\partial_t \boldsymbol \eta = \boldsymbol \xi$ and 
\begin{gather*}
\rho_F \int_{\Omega_F(t)} \partial_t \boldsymbol v|_{\hat{\Omega}_F} \cdot \boldsymbol \phi d\boldsymbol{x} 
+\rho_F \int_{\Omega_F(t)} \left((\boldsymbol v-\boldsymbol w) \cdot \nabla \right) \boldsymbol v \cdot \boldsymbol \phi  d\boldsymbol{x} 
+2 \mu_F \int_{\Omega_F(t)} \boldsymbol
D(\boldsymbol v) : \boldsymbol D (\boldsymbol \phi) d \boldsymbol x
\notag \\
- \int_{\Omega_F(t)} p \nabla \cdot \boldsymbol \phi d\boldsymbol{x} 
+ \int_{\Omega_F(t)} q \nabla \cdot \boldsymbol v d\boldsymbol{x} 
+  {\rho}_S \int_{\hat\Omega_S}  \partial_{t} {\boldsymbol \xi} \cdot {\boldsymbol \zeta} d {\boldsymbol X}
+  2 \mu_S \int_{\hat\Omega_S}   D(\boldsymbol \eta) : \boldsymbol D (\boldsymbol \zeta) d \boldsymbol X
\notag \\
+  \lambda_S \int_{\hat\Omega_S}   (\nabla \cdot \boldsymbol \eta) (\nabla \cdot \boldsymbol \zeta) d \boldsymbol X
= 
%\int_{\Gamma_F^{in} \cup \Gamma_F^{out}} \boldsymbol \sigma_F \boldsymbol n \cdot \boldsymbol \phi d\boldsymbol{x} 
- \int_{\Gamma_F^{in}} p_{in} \boldsymbol \phi \cdot \boldsymbol{n}_F dx
- \int_{\Gamma_F^{out}} p_{out} \boldsymbol \phi \cdot \boldsymbol{n}_F dx
\notag 
\\
+\frac{\rho_F}{2} \int_{\Gamma_F^{in} \cup \Gamma_F^{out}} |\boldsymbol v |^2 \boldsymbol \phi \cdot \boldsymbol{n}_F dx,
\end{gather*}
for all $(\boldsymbol \phi, {\boldsymbol \zeta}) \in V^{FSI}(t), q \in Q^F(t)$.

To  derive the energy of the coupled FSI problem, we take $\boldsymbol \phi = \boldsymbol v, q=p$ and $\boldsymbol \zeta=\boldsymbol \xi$.
We transform $\int_{\Omega_F(t)} \rho_F   \partial_t {\boldsymbol{v}} |_{\hat{\Omega}_F} \cdot \boldsymbol v d\boldsymbol x$ on the reference domain $\hat{\Omega}_F$ as follows:
\begin{align*}
\int_{\Omega_F(t)}  \rho_F   \partial_t {\boldsymbol{v}} |_{\hat{\Omega}_F} \cdot \boldsymbol v d\boldsymbol x
& 
=
 \int_{\hat{\Omega}_F} \rho_F   J \partial_t \left({\boldsymbol{v} \circ \mathcal{A}}\right)  \cdot \left( {\boldsymbol v}  \circ \mathcal{A} \right) d\hat{\boldsymbol x}
 \\
 & 
 = 
\frac12 \int_{\hat{\Omega}_F}  \rho_F   \partial_t \left( J | {\boldsymbol{v}} \circ \mathcal{A} |^2 \right)  d\hat{\boldsymbol x}
-\frac12 \int_{\hat{\Omega}_F}  \rho_F   \partial_t   J  | {\boldsymbol{v}} \circ \mathcal{A}|^2  d\hat{\boldsymbol x}.
\end{align*}
Using the Euler expansion formula,
\begin{align}
\partial_t J|_{\hat{\Omega}_F} = J {\nabla} \cdot {\boldsymbol w},
\label{euleref}
\end{align}
 we have
\begin{align*}
\int_{\Omega_F(t)}  \rho_F   \partial_t {\boldsymbol{v}} |_{\hat{\Omega}_F} \cdot \boldsymbol v d\boldsymbol x
& 
=
\frac12 \int_{\hat{\Omega}_F}  \rho_F   \partial_t \left( J | {\boldsymbol{v} \circ \mathcal{A} }|^2 \right)   d\hat{\boldsymbol x}
-\frac12 \int_{\hat{\Omega}_F}  \rho_F    J {\nabla} \cdot \left( {\boldsymbol w} \circ \mathcal{A} \right) | {\boldsymbol{v}\circ \mathcal{A} }|^2   d\hat{\boldsymbol x}
   \\
 & 
=
\frac12 \frac{d}{dt} \int_{\hat{\Omega}_F}  \rho_F     J | {\boldsymbol{v} \circ \mathcal{A}}|^2  d\hat{\boldsymbol x}
-\frac12 \int_{\hat{\Omega}_F}  \rho_F    J {\nabla} \cdot \left( {\boldsymbol w} \circ \mathcal{A} \right) | {\boldsymbol{v}\circ \mathcal{A} }|^2   d\hat{\boldsymbol x}
     \\
 & 
 = 
\frac12 \frac{d}{dt} \int_{{\Omega_F(t)}}  \rho_F      | {\boldsymbol{v}}|^2   d{\boldsymbol x}
-\frac12 \int_{{\Omega_F(t)}}  \rho_F   {\nabla} \cdot {\boldsymbol w} | {\boldsymbol{v}}|^2   d{\boldsymbol x}.
\end{align*}
To handle the convective term,  after integration by parts and taking into account $\nabla \cdot \boldsymbol v=0$, we have
\begin{align}
\rho_F\int_{\Omega_F(t)}\left((\boldsymbol{v}-\boldsymbol{w}) \cdot \nabla\right) \boldsymbol{v} \cdot \boldsymbol v d\boldsymbol x 
& =
\frac{\rho_F}{2} \int_{\Omega_F(t)} \nabla \cdot \boldsymbol w  |\boldsymbol v|^2 d\boldsymbol x 
 +\frac{\rho_F}{2}  \int_{\Gamma(t)} \left((\boldsymbol v - \boldsymbol w) \cdot \boldsymbol n_F \right) |\boldsymbol v|^2 dS
\notag
\\
&
+\frac{\rho_F}{2}  \int_{\Gamma_F^{in}(t) \cup \Gamma_F^{out}(t)} \left((\boldsymbol v - \boldsymbol w) \cdot \boldsymbol n_F \right) |\boldsymbol v|^2 dS.
\notag
\end{align}
Since $\boldsymbol w = \boldsymbol u$ on $\Gamma(t)$ and $\boldsymbol w = \boldsymbol 0$ on $\Gamma_F^{in} \cup \Gamma_F^{out}$,  the following energy equality holds:
\begin{gather*}
\frac{\rho_F}{2} \frac{d}{dt}    \|   {\boldsymbol{v}}  \|^2_{L^2(\Omega_F(t))}
+2 \mu_F \| \boldsymbol D(\boldsymbol v) \|^2_{L^2(\Omega_F(t))}
+\frac{\rho_S}{2} \frac{d}{dt} \|  {\boldsymbol \xi} \|^2_{L^2(\hat{\Omega}_S)}
+\frac{1}{2} \frac{d}{dt}\| {\boldsymbol \eta} \|^2_{S}
\\
 = 
 - \int_{\Gamma_F^{in}} p_{in}(t)  \boldsymbol v \cdot \boldsymbol n  d  S
 - \int_{\Gamma_F^{out}} p_{out}(t)  \boldsymbol v \cdot \boldsymbol n  d  S.
\end{gather*}

%We also denote 
%\begin{align}
%& \tilde{\boldsymbol v}^{n} = \boldsymbol v^{n} \circ \mathcal{A}(t^{n}) \circ \mathcal{A}^{-1}(t^{n+1}), \;
%\tilde{p}^{n} = p^{n} \circ \mathcal{A}(t^{n}) \circ \mathcal{A}^{-1}(t^{n+1}), \;
%\tilde{\boldsymbol n}_F^{n}  = {\boldsymbol n}_F^{n}  \circ \mathcal{A}(t^{n}) \circ \mathcal{A}^{-1}(t^{n+1}),
%\\
%&\hat{\boldsymbol v}^{n+1} = \boldsymbol v^{n+1} \circ \mathcal{A}(t^{n+1}) \circ \mathcal{A}^{-1}(t^{n+\frac12}), \;
%\hat{\boldsymbol w}^{n+1} = \boldsymbol w^{n+1} \circ \mathcal{A}(t^{n+1}) \circ \mathcal{A}^{-1}(t^{n+\frac12}),
%\\
%&\bar{\boldsymbol v}^{n+1} = \boldsymbol v^{n+1} \circ \mathcal{A}(t^{n+1}) \circ \mathcal{A}^{-1}(t^{n}), 
%\\
%&
%\breve{\boldsymbol v}^{n} = \boldsymbol v^{n} \circ \mathcal{A}(t^{n}) \circ \mathcal{A}^{-1}(t^{n+\frac12}).
%\end{align}
%% and $\tilde{\boldsymbol \sigma}_F(\boldsymbol v^{n+1}, p^{n+1}) \tilde{\boldsymbol n}_F^{n+1}= \left({\boldsymbol \sigma}_F(\boldsymbol v^{n+1}, p^{n+1}) \boldsymbol n_F^{n+1} \right) \circ \mathcal{A}(t^{n+1}) \circ \mathcal{A}^{-1}(t^{n})$.
%

\section{Numerical method}
Let $\Delta t$ be the time step and $t^n = n \Delta t$ for $n=0, \ldots, N.$ We denote by $z^n$ the approximation of a time-dependent function $z$ at time level $t^n$.  We define the discrete backward difference operator $d_t z^{n+1}$ and the average $z^{n+\frac12}$ as 
$$
d_t z^{n+1} = \frac{z^{n+1}-z^n}{\Delta t},
\qquad z^{n+\frac12} = \frac{z^{n+1}+z^n}{2}.
%, \quad \delta_{tt} z^{n+1} = d_t (d_t z^{n+1}).
$$
Similar as in~\cite{badia,badia2009robin}, we consider a linear combination of  FSI coupling conditions~\eqref{kinematic}-\eqref{dynamic}
\begin{align}
& \alpha \boldsymbol \xi+  \boldsymbol{\sigma}_S {\boldsymbol n}_S= \alpha \boldsymbol{v} \circ \mathcal{A}(t)   - {J} \boldsymbol \sigma_F \boldsymbol{F}^{-T} \boldsymbol n_F    \;\;\; \textrm{on} \; \hat\Gamma \times (0,T), \label{lcomb}
 \end{align}
where $\alpha>0$ is a  combination parameter. Using~\eqref{dynamic} again, we introduce the following two time-discrete  transmission conditions of Robin type:
\begin{align}
& \alpha \boldsymbol \xi^{n+1}+  \boldsymbol{\sigma}_S^{n+1} {\boldsymbol n}_S= \alpha \boldsymbol{v}^{n} \circ \mathcal{A}(t^n)   - {J}^n \boldsymbol \sigma_F^n (\boldsymbol{F}^n)^{-T} \boldsymbol n_F^n \;\;\; \textrm{on} \; \hat\Gamma \times (0,T), \label{cc1}
\\
  & 
  \alpha \boldsymbol \xi^{n+1}   - {J}^{n+1} \boldsymbol \sigma_F^{n+1} (\boldsymbol{F}^{n+1})^{-T} \boldsymbol n_F^{n+1}
 = 
\alpha \boldsymbol{v}^{n+1}\circ \mathcal{A}(t^{n+1}) 
 - {J}^n \boldsymbol \sigma_F^n (\boldsymbol{F}^n)^{-T} \boldsymbol n_F^n.
  \quad  \textrm{on} \; \hat\Gamma \times (0,T).
   \label{cc2}
 \end{align}
Condition~\eqref{cc1} will serve as a Robin-type boundary condition for the structure sub-problem, and condition~\eqref{cc2} will serve as a Robin-type boundary condition for the fluid sub-problem. 
%\begin{remark}
%Note that the condition~\eqref{cc2} could be written on ${\Gamma}(t^{n+1})$ as
%\begin{align}
%\alpha \boldsymbol \xi^{n+1} \circ \mathcal{A}^{-1}(t^{n+1})
%-  \boldsymbol \sigma_F ({\boldsymbol v}^{n+1}, {p}^{n+1}) {\boldsymbol n}_F^{n+1}
%= \alpha {\boldsymbol{v}}^{n+1}
%  -\frac{J^n}{J^{n+1}}  \boldsymbol \sigma_F(\tilde{\boldsymbol v}^{n}, \tilde{p}^{n}) \tilde{\boldsymbol n}_F^n \;\; \; \textrm{on} \; \Gamma(t^{n+1}) \times (0,T). \label{robin2}
%\end{align}
%\end{remark}
To discretize the fluid and structure sub-problems in time, we use the Backward Euler scheme. 
The fluid and structure sub-problems, semi-discretized in time, are now given as follows:

\noindent \textbf{Structure sub-problem:} Find ${\boldsymbol \eta}^{n+1}$ and $\boldsymbol \xi^{n+1}$ such that
\begin{align}
& d_t \boldsymbol \eta^{n+1} = \boldsymbol \xi^{n+1}   &  \textrm{in}\; \hat{\Omega}_S,  
\label{scheme1}
\\
& {\rho}_S d_t  {\boldsymbol \xi}^{n+1}  =  {\nabla} \cdot \boldsymbol \sigma_S (\boldsymbol \eta^{n+1}) &  \textrm{in}\; \hat{\Omega}_S,  \\
& \alpha \boldsymbol \xi^{n+1}+  \boldsymbol{\sigma}_S^{n+1} {\boldsymbol n}_S= \alpha \boldsymbol{v}^{n} \circ \mathcal{A}(t^n)   - {J}^n \boldsymbol \sigma_F^n (\boldsymbol{F}^n)^{-T} \boldsymbol n_F^n  & \textrm{on} \; \hat{\Gamma}. 
\end{align}

\noindent \textbf{Geometry sub-problem:} Find ${\boldsymbol \eta}_F^{n+1}$  such that
\begin{align}
&- \Delta {\boldsymbol \eta}^{n+1}_F = 0 & \textrm{in} \; \hat{\Omega}_F,  \\
&  {\boldsymbol \eta}^{n+1}_F = 0 & \textrm{on} \; \hat{\Gamma}^{in}_F \cup \hat{\Gamma}^{out}_F, \\
&  {\boldsymbol \eta}^{n+1}_F = {\boldsymbol \eta}^{n+1} & \textrm{on} \; \hat{\Gamma},
\end{align}
and ${\boldsymbol w}^{n+1}$ such that
\begin{equation}
{\boldsymbol w}^{n+1} \circ \mathcal{A}(t^{n+1}) = d_t {\boldsymbol \eta}_F^{n+1} \quad \textrm{in} \; \hat{\Omega}_F.
\end{equation}
Compute $\Omega_F(t^{n+1})$ as $\Omega_F(t^{n+1}) = (I + {\boldsymbol \eta}_F^{n+1})(\hat{\Omega}_F)$.
Set $ \boldsymbol v^{n} \circ \mathcal{A}(t^{n})  = \boldsymbol w^{n+1} \circ \mathcal{A}(t^{n+1})$ on $\hat\Gamma$.

\noindent \textbf{Fluid sub-problem:} Find $\boldsymbol v^{n+1}$ and $p^{n+1}$ such that
\begin{align}\label{Tfluid}
& \rho_F \left( J^{n} \frac{ {\boldsymbol{v}}^{n+1} \circ \mathcal{A}(t^{n+1})-{\boldsymbol{v}}^{n} \circ \mathcal{A}(t^{n})}{\Delta t} + J^{n+\frac12} ({\boldsymbol{v}}^{n} \circ \mathcal{A}(t^{n})-\boldsymbol{w}^{n+1} \circ \mathcal{A}(t^{n+1})) \cdot \nabla \boldsymbol{v}^{n+1} \circ \mathcal{A}(t^{n+1}) \right)  
\notag
\\
&\qquad = J^{n+1} \nabla \cdot \boldsymbol\sigma_F({\boldsymbol v}^{n+1}, {p}^{n+1})  \circ \mathcal{A}(t^{n+1}) \qquad  \textrm{in}\; \hat{\Omega}_F, 
\\
&J^{n+1} \nabla \cdot {\boldsymbol{v}}^{n+1} = 0 \qquad  \textrm{in}\; \hat{\Omega}_F, \\
&
\alpha \boldsymbol \xi^{n+1}   - {J}^{n+1} \boldsymbol \sigma_F^{n+1} (\boldsymbol{F}^{n+1})^{-T} \boldsymbol n_F^{n+1}
 = 
\alpha \boldsymbol{v}^{n+1}\circ \mathcal{A}(t^{n+1}) 
 - {J}^n \boldsymbol \sigma_F^n (\boldsymbol{F}^n)^{-T} \boldsymbol n_F^n
  \qquad \textrm{on} \; \hat{\Gamma}. \label{scheme2}
%& \alpha \tilde{\boldsymbol{v}}^{n+1}+ \frac{J^{n+1}}{J^n}\boldsymbol \sigma_F (\tilde{\boldsymbol v}^{n+1}, \tilde{p}^{n+1}) \tilde{\boldsymbol n}_F^{n+1}= \alpha \boldsymbol \xi^{n+1} \circ \mathcal{A}^{-1}(t^{n}) + {\boldsymbol \sigma}_F(\boldsymbol v^{n}, p^{n}) {\boldsymbol n}_F^n
 % & \textrm{on} \; \Gamma(t^{n}). \label{scheme2}
\end{align}
We note that the continuous formulation of the fluid sub-problem is written  on the reference domain due to the use of  different time discretizations of the computational domain for different terms in the equation. However, the  deformed domains, as described in~\eqref{Fweak}, are considered in practice.

\subsection{Weak formulation of the semi-discrete partitioned scheme}

We define the following bilinear forms associated with the fluid problem:
%\begin{gather}
% a_F(\boldsymbol v, \boldsymbol \phi) = 2 \mu \int_{\Omega_F(t^n)} \boldsymbol
%D(\boldsymbol v) : \boldsymbol D (\boldsymbol \phi) d \boldsymbol x, 
%\qquad  b_F(p, \boldsymbol \phi) =  \int_{\Omega_F(t^n)} p \nabla \cdot \boldsymbol
%\phi d \boldsymbol x,
%\end{gather}
\begin{gather*}
 a_F^n(\boldsymbol v, \boldsymbol \phi) = 2 \mu_F \int_{\Omega_F(t^n)} \boldsymbol
D(\boldsymbol v) : \boldsymbol D (\boldsymbol \phi) d \boldsymbol x, 
\qquad  b_F^n(p, \boldsymbol \phi) =  \int_{\Omega_F (t^n)} p \nabla \cdot \boldsymbol
\phi d \boldsymbol x,
\end{gather*}
for all $\boldsymbol v, \boldsymbol \phi \in V^F(t^n)$ and $p \in Q^F(t^n)$. To simplify the notation  moving forward, we will write
$$
\int_{\Omega(t^{m})} \boldsymbol v^{n}
\qquad 
\textrm{instead of}
\quad
\int_{\Omega(t^{m})} \boldsymbol v^{n} \circ \mathcal{A}(t^{n}) \circ \mathcal{A}^{-1}(t^{m})
$$
whenever we need to integrate $\boldsymbol v^n$ on a domain $\Omega(t^{m})$, for $m \neq n$.
The weak formulation of the fluid and structure sub-problems is given as:

\noindent \textbf{Structure sub-problem:} Find $\boldsymbol \xi^{n+1} \in V^S$ and $\boldsymbol \eta^{n+1} \in V^S$, where $\boldsymbol \xi^{n+1} = d_t \boldsymbol \eta^{n+1}$, such that for all $\boldsymbol \zeta \in V^S$ we have
\begin{align}
\rho_S \int_{\hat{\Omega}_S} d_t \boldsymbol \xi^{n+1} \cdot \boldsymbol \zeta d \boldsymbol x + a_S(\boldsymbol \eta^{n+1}, \boldsymbol \zeta)
+\alpha \int_{\hat{\Gamma}} (\boldsymbol \xi^{n+1} - \boldsymbol v^n ) \cdot \boldsymbol \zeta d \boldsymbol x
= -\int_{\hat\Gamma} {J}^n \boldsymbol \sigma_F^n (\boldsymbol{F}^n)^{-T} \boldsymbol n_F^n \cdot \boldsymbol \zeta d\boldsymbol x. \label{Sweak} 
\end{align}

\noindent \textbf{Fluid sub-problem:} Find $\boldsymbol v^{n+1} \in V^F(t^{n+1})$ and $p^{n+1} \in Q^F(t^{n+1})$ such that for all $\boldsymbol \phi \in V^F(t^{n+1})$ and $\psi \in Q^F(t^{n+1})$ we have
\begin{align}
&\rho_F \int_{\Omega_F(t^{n})}  \frac{ {\boldsymbol{v}}^{n+1}-{\boldsymbol{v}}^{n}}{\Delta t}  \cdot {\boldsymbol \phi} d \boldsymbol x 
+\rho_F \int_{\Omega_F(t^{n+\frac12})}   \left(({\boldsymbol{v}}^{n}-{\boldsymbol{w}}^{n+1}) \cdot \nabla \right) {\boldsymbol{v}}^{n+1}  \cdot {\boldsymbol \phi} d \boldsymbol x 
+a_F^{n+1}({\boldsymbol v}^{n+1}, \boldsymbol \phi)
\notag
\\
&
\quad
-b_F^{n+1}({p}^{n+1}, \boldsymbol \phi)
+b_F^{n+1}(\psi, {\boldsymbol v}^{n+1})
+ \alpha  \int_{\Gamma(t^{n+1})}(\boldsymbol v^{n+1} - \boldsymbol \xi^{n+1} ) \cdot \boldsymbol \phi d \boldsymbol x
\notag
\\
&
\quad
 =\int_{\Gamma(t^{n})}  \boldsymbol \sigma_F(\boldsymbol v^n, p^n) \boldsymbol n_F^n \cdot {\boldsymbol \phi} d \boldsymbol x
 +\int_{\Gamma_F^{in} \cup \Gamma_F^{out}} \boldsymbol \sigma_F (\boldsymbol v^{n+1}, p^{n+1}) \boldsymbol n_F^{n+1} \cdot \boldsymbol \phi d \boldsymbol x.
\label{Fweak}
 \end{align}
 We note that the boundary conditions in the fluid sub-problem are not specified. Conditions~\eqref{inlet}-\eqref{outlet} will be used in numerical simulations in Section~\ref{numerics}, while conditions~\eqref{inlet2}-\eqref{dp3} will be used in stability analysis in Section~\ref{StabAn}.

\section{Stability analysis}~\label{StabAn}
Let $\mathcal{E}^n$ denote the sum of the kinetic energy of the fluid and the kinetic and elastic energy of the solid, given by
$$
\mathcal{E}^n= \frac{\rho_F}{2} \| \boldsymbol v^{n} \|^2_{L^2(\Omega_F(t^n))} 
+ \frac{\rho_S}{2}  \| \boldsymbol \xi^{n} \|^2_{L^2(\hat{\Omega}_S)}
+\frac12   \| \boldsymbol \eta^{n}\|^2_S,
$$ 
let $\mathcal{D}^n$ denote the fluid viscous dissipation, given by
$$
\mathcal{D}^n = \mu_F \Delta t \sum_{k=1}^{n}   \| \boldsymbol D(\boldsymbol v^{k}) \|^2_{L^2(\Omega_F(t^{k}))},
$$
and let $\mathcal{N}_1^n$ and $\mathcal{N}_2^n$ denote terms due to numerical dissipation, given by
\begin{align*}
& \mathcal{N}_1^n = 
\frac{\alpha \Delta t}{2}  \| \boldsymbol v^{n}\|^2_{L^2(\hat{\Gamma})} 
+\frac{ \Delta t}{2 \alpha}   \| {J}^n \boldsymbol \sigma_F^n (\boldsymbol{F}^n)^{-T} \boldsymbol n_F^n  \|^2_{L^2(\hat\Gamma)} , 
 \\
&  \mathcal{N}_2^n =
  \frac{\rho_S}{2} \sum_{k=0}^{n-1} \| \boldsymbol \xi^{k+1} - \boldsymbol \xi^{k} \|^2_{L^2(\hat{\Omega}_S)}  
+\frac12 \sum_{k=0}^{n-1} \| \boldsymbol \eta^{k+1} - \boldsymbol \eta^k\|^2_S 
 + \frac{\rho_F}{2} \sum_{k=0}^{n-1}  \| {\boldsymbol v}^{k+1} - \boldsymbol v^{k} \|^2_{L^2(\Omega_F(t^{k}))} 
  \\
 &\qquad 
+\frac{\alpha \Delta t}{2} \sum_{k=0}^{n-1}  \| \boldsymbol \xi^{k+1}-\boldsymbol v^k \|^2_{L^2(\hat{\Gamma})}. 
\end{align*}
The stability of method~\eqref{Sweak}-\eqref{Fweak} is presented in the following theorem. 
\begin{theorem}
Let $(\boldsymbol \xi^n, \boldsymbol \eta^n, \boldsymbol v^n, p^n$) be the solution of~\eqref{Sweak}-\eqref{Fweak}. Assume  boundary conditions~\eqref{inlet2}-\eqref{dp3} are imposed. 
Then, the following a priori energy estimate holds:
\begin{align}
& \mathcal{E}^{N}+\mathcal{D}^N+\mathcal{N}_1^N+\mathcal{N}_2^N
\leq  \mathcal{E}^0+\mathcal{N}_1^0 + \frac{\Delta t C_P^2 C_K^2 }{2 \mu_F} \| p_{in} \|^2_{L^2(\Gamma_F^{in})}
  +  \frac{\Delta t C_P^2 C_K^2}{2 \mu_F} \| p_{out} \|^2_{L^2(\Gamma_F^{in})}.
    \label{energy_inequality} %%%%NEW from Anya
\end{align}

\end{theorem}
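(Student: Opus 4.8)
The plan is to run the standard discrete energy argument: test the structure weak form \eqref{Sweak} with $\boldsymbol\zeta=\boldsymbol\xi^{n+1}=d_t\boldsymbol\eta^{n+1}$ and the fluid weak form \eqref{Fweak} with $\boldsymbol\phi=\boldsymbol v^{n+1}$, $\psi=p^{n+1}$, multiply both by $\Delta t$, add them, and sum over $n=0,\dots,N-1$. First I would dispose of the terms insensitive to the moving domain and to the coupling. The two pressure contributions $-b_F^{n+1}(p^{n+1},\boldsymbol v^{n+1})+b_F^{n+1}(p^{n+1},\boldsymbol v^{n+1})$ cancel. The inertial and elastic terms are handled by the polarization identity $(\boldsymbol a-\boldsymbol b)\cdot\boldsymbol a=\tfrac12(|\boldsymbol a|^2-|\boldsymbol b|^2+|\boldsymbol a-\boldsymbol b|^2)$ and its bilinear-form analogue for $a_S$: the terms $\Delta t\,\rho_S\int_{\hat\Omega_S}d_t\boldsymbol\xi^{n+1}\cdot\boldsymbol\xi^{n+1}$ and $\Delta t\,a_S(\boldsymbol\eta^{n+1},d_t\boldsymbol\eta^{n+1})$ produce the telescoping kinetic and elastic energies collected in $\mathcal E$, the squared-increment terms $\tfrac{\rho_S}{2}\|\boldsymbol\xi^{k+1}-\boldsymbol\xi^k\|^2$ and $\tfrac12\|\boldsymbol\eta^{k+1}-\boldsymbol\eta^k\|_S^2$ collected in $\mathcal N_2$, while the viscous term gives $2\mu_F\Delta t\|\boldsymbol D(\boldsymbol v^{n+1})\|^2$.

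The algebraic heart of the argument is the interface coupling. Writing $\boldsymbol S^n:=J^n\boldsymbol\sigma_F^n(\boldsymbol F^n)^{-T}\boldsymbol n_F^n$ and pulling the fluid interface integral back to $\hat\Gamma$, the four boundary contributions from \eqref{Sweak}--\eqref{Fweak} combine, per time step, into
\[
\alpha\big[(\boldsymbol\xi^{n+1}-\boldsymbol v^n)\cdot\boldsymbol\xi^{n+1}+(\boldsymbol v^{n+1}-\boldsymbol\xi^{n+1})\cdot\boldsymbol v^{n+1}\big]+\boldsymbol S^n\cdot(\boldsymbol\xi^{n+1}-\boldsymbol v^{n+1}).
\]
The key observation is that the natural boundary condition of \eqref{Fweak} is exactly the discrete Robin condition \eqref{cc2}, which yields the stress recurrence $\boldsymbol S^{n+1}=\boldsymbol S^n+\alpha(\boldsymbol\xi^{n+1}-\boldsymbol v^{n+1})$, hence $\boldsymbol\xi^{n+1}-\boldsymbol v^{n+1}=\tfrac1\alpha(\boldsymbol S^{n+1}-\boldsymbol S^n)$. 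Substituting this into the last term and applying polarization, the whole expression reduces on $\hat\Gamma$ to $\tfrac\alpha2|\boldsymbol\xi^{n+1}-\boldsymbol v^n|^2+\tfrac\alpha2(|\boldsymbol v^{n+1}|^2-|\boldsymbol v^n|^2)+\tfrac1{2\alpha}(|\boldsymbol S^{n+1}|^2-|\boldsymbol S^n|^2)$. After multiplying by $\Delta t$ and summing, the last two groups telescope into $\mathcal N_1^N-\mathcal N_1^0$, while the first supplies the interface dissipation $\tfrac{\alpha\Delta t}{2}\sum_k\|\boldsymbol\xi^{k+1}-\boldsymbol v^k\|_{L^2(\hat\Gamma)}^2$ in $\mathcal N_2$. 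This completion of squares is precisely what yields unconditional stability without sub-iterations or added stabilization.

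I expect the main obstacle to be the moving-domain bookkeeping in the fluid inertia and convection. The discrete time derivative is integrated over $\Omega_F(t^n)$, so polarization produces the fluid kinetic energy on the old domain, whereas $\mathcal E^{n+1}$ requires it on $\Omega_F(t^{n+1})$. To reconcile the two I would transform everything to the reference domain and use the discrete analogue of the Euler expansion formula \eqref{euleref}, relating $J^{n+1}-J^n$ to $\Delta t\,J^{n+\frac12}\nabla\cdot\boldsymbol w^{n+1}$; the factor $J^{n+\frac12}$ in the convective term of \eqref{Tfluid} is chosen precisely so that the divergence part arising from integrating the convective term by parts (using $\nabla\cdot\boldsymbol v^{n+1}=0$) exactly accounts for the discrepancy $\int(J^{n+1}-J^n)|\boldsymbol v^{n+1}|^2$, converting old-domain into new-domain kinetic energy. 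The interface boundary term generated by this integration by parts carries the factor $(\boldsymbol v^n-\boldsymbol w^{n+1})\cdot\boldsymbol n_F$, which vanishes because the geometry sub-problem sets $\boldsymbol v^n=\boldsymbol w^{n+1}$ on $\hat\Gamma$; the inlet/outlet boundary terms are cancelled against the remaining convective kinetic contribution by the dynamic-pressure conditions \eqref{inlet2}--\eqref{dp3}, leaving only the pressure-work right-hand side $-\int_{\Gamma_F^{in}}p_{in}\boldsymbol v^{n+1}\cdot\boldsymbol n_F-\int_{\Gamma_F^{out}}p_{out}\boldsymbol v^{n+1}\cdot\boldsymbol n_F$.

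Finally, I would bound this right-hand side. Using the trace inequality with constant $C_K$ and the Korn--Poincar\'e inequality with constant $C_P$, one has $\|\boldsymbol v^{n+1}\|_{L^2(\Gamma_F^{in}\cup\Gamma_F^{out})}\le C_KC_P\|\boldsymbol D(\boldsymbol v^{n+1})\|_{L^2(\Omega_F(t^{n+1}))}$, followed by Young's inequality tuned so that each of the two pressure terms absorbs $\tfrac{\mu_F}{2}\Delta t\|\boldsymbol D(\boldsymbol v^{n+1})\|^2$ into the viscous term. Since the diffusion provides $2\mu_F\Delta t\|\boldsymbol D(\boldsymbol v^{n+1})\|^2$, the two absorptions consume $\mu_F\Delta t\|\boldsymbol D(\boldsymbol v^{n+1})\|^2$, leaving exactly $\mathcal D^N=\mu_F\Delta t\sum_k\|\boldsymbol D(\boldsymbol v^{k})\|^2$ on the left. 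The leftover data terms are $\tfrac{\Delta t\,C_P^2C_K^2}{2\mu_F}\|p_{in}\|^2+\tfrac{\Delta t\,C_P^2C_K^2}{2\mu_F}\|p_{out}\|^2$, and collecting $\mathcal E^N$, $\mathcal D^N$, $\mathcal N_1^N$ and $\mathcal N_2^N$ on the left against $\mathcal E^0+\mathcal N_1^0$ and the data on the right gives \eqref{energy_inequality}.
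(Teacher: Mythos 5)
Your proposal is correct and follows essentially the same route as the paper's proof: the same test functions $\boldsymbol\zeta=\Delta t\,\boldsymbol\xi^{n+1}$, $\boldsymbol\phi=\Delta t\,\boldsymbol v^{n+1}$, $\psi=\Delta t\,p^{n+1}$, the same polarization identities for the inertial and elastic terms, the same use of the discrete Robin condition \eqref{cc2} to turn the lagged interface stress into the telescoping quantity $\mathcal{N}_1$ plus the square that cancels $\frac{\alpha\Delta t}{2}\|\boldsymbol v^{n+1}-\boldsymbol\xi^{n+1}\|^2_{L^2(\hat\Gamma)}$, the same Geometric Conservation Law treatment reconciling old-domain and new-domain kinetic energy with the $J^{n+\frac12}$ convective term, and the same trace--Korn--Poincar\'e--Young estimate absorbing the pressure data into half of the viscous dissipation. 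The only cosmetic difference is that you collect all four interface contributions on the left before completing squares, whereas the paper keeps the stress term on the right-hand side of \eqref{stab1} and performs the cancellation across the equality; the algebra is identical.
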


\begin{proof}
Take  $\boldsymbol \zeta =\Delta t \boldsymbol \xi^{n+1}$ in~\eqref{Sweak} and $\boldsymbol \phi = \Delta t\boldsymbol v^{n+1}, \psi =\Delta t p^{n+1}$ in~\eqref{Fweak}. Adding the equations and recasting the interface integrals in the fluid problem on the reference domain, we have
\begin{align}
& \frac{\rho_S}{2} \left( \| \boldsymbol \xi^{n+1} \|^2_{L^2(\hat{\Omega}_S)} - \| \boldsymbol \xi^{n} \|^2_{L^2(\hat{\Omega}_S)} + \| \boldsymbol \xi^{n+1} - \boldsymbol \xi^{n} \|^2_{L^2(\hat{\Omega}_S)} \right)
+\frac12 \left( \| \boldsymbol \eta^{n+1}\|^2_S - \| \boldsymbol \eta^{n}\|^2_S + \| \boldsymbol \eta^{n+1} - \boldsymbol \eta^n\|^2_S \right)
\notag \\
&
+\rho_F \int_{\Omega_F(t^{n})}  ({ \boldsymbol{v}}^{n+1}-{\boldsymbol{v}}^{n}) \cdot {\boldsymbol v}^{n+1} d \boldsymbol x
+\rho_F \Delta t \int_{\Omega_F( t^{n+\frac12})}   \left(   ({\boldsymbol{v}}^{n}-{\boldsymbol{w}}^{n+1} ) \cdot \nabla \right) {\boldsymbol{v}}^{n+1}  \cdot {\boldsymbol v}^{n+1} d \boldsymbol x 
\notag 
\\
&
+2\mu_F \Delta t \| \boldsymbol D(\boldsymbol v^{n+1}) \|^2_{L^2(\Omega_F(t^{n+1}))}
+\frac{\alpha \Delta t}{2} \left( \| \boldsymbol v^{n+1}  \|^2_{L^2(\hat\Gamma)} 
-\| \boldsymbol v^n   \|^2_{L^2(\hat\Gamma)} 
\right)
\notag 
\\
&
+\frac{\alpha \Delta t}{2} \left( 
\| \boldsymbol \xi^{n+1}-\boldsymbol v^n   \|^2_{L^2(\hat\Gamma)}
+\| \boldsymbol v^{n+1}  - \boldsymbol \xi^{n+1} \|^2_{L^2(\hat \Gamma)} 
\right)
\notag
\\
&
= 
 \Delta t \int_{\hat\Gamma} {J}^n \boldsymbol \sigma_F^n (\boldsymbol{F}^n)^{-T} \boldsymbol n_F^n \cdot (\boldsymbol v^{n+1} - \boldsymbol \xi^{n+1}) d \boldsymbol x
  -\Delta t \int_{\Gamma_F^{in}} p_{in} \boldsymbol v^{n+1} \cdot \boldsymbol n_F^{n+1} d x
 \notag
\\
&
  -\Delta t \int_{\Gamma_F^{out}} p_{out} \boldsymbol v^{n+1} \cdot \boldsymbol n_F^{n+1} dx
  +\frac{\rho_F \Delta t}{2} \int_{\Gamma_F^{in} \cup \Gamma_F^{out}} |\boldsymbol v^{n+1} |^2 \boldsymbol v^{n+1} \cdot \boldsymbol{n}_F^{n+1} dx. 
 \label{stab1}
\end{align}
We transform the integral containing the time-derivative of the fluid velocity to the reference domain as follows:
\begin{gather*}
\rho_F \int_{\Omega_F(t^{n})}  ( {\boldsymbol{v}}^{n+1}-{\boldsymbol{v}}^{n})  \cdot {\boldsymbol v}^{n+1} d \boldsymbol x 
=\rho_F \int_{\hat{\Omega}_F}  J^{n} \left( \boldsymbol{v}^{n+1}-{\boldsymbol{v}}^{n} \right)  \cdot \boldsymbol v^{n+1}  d \boldsymbol x.
\end{gather*}
Using identity 
\begin{align*}
&\int_{\hat{\Omega}_F}  J^{n} \left( \boldsymbol{v}^{n+1}-{\boldsymbol{v}}^{n} \right)  \cdot \boldsymbol v^{n+1}  d \boldsymbol x
\notag
\\
& \quad 
=
\frac12 \int_{\hat{\Omega}_F} \left(J^{n+1} |\boldsymbol{v}^{n+1}|^2 - J^n |\boldsymbol{v}^{n} |^2 \right) d \boldsymbol x
-\frac12 \int_{\hat{\Omega}_F} \left(J^{n+1}-J^n \right) |\boldsymbol{v}^{n+1} |^2 d \boldsymbol x
\notag
\\
& \quad 
+\frac{1}{2 } \int_{\hat{\Omega}_F} J^n |\boldsymbol{v}^{n+1} -\boldsymbol{v}^{n} |^2 d \boldsymbol x,
\end{align*}
we obtain
\begin{align}
\rho_F \int_{\Omega_F(t^{n})}  \left( {\boldsymbol{v}}^{n+1}-{\boldsymbol{v}}^{n} \right)  \cdot {\boldsymbol v}^{n+1} d \boldsymbol x 
=&\frac{\rho_F}{2 } \left( \|\boldsymbol v^{n+1} \|^2_{L^2(\Omega_F(t^{n+1}))}  -\|\boldsymbol v^{n} \|^2_{L^2(\Omega_F(t^{n}))}+\|{\boldsymbol v}^{n+1}-\boldsymbol v^{n} \|^2_{L^2(\Omega_F(t^{n}))}   \right)
\notag
\\
&-\frac{\rho_F}{2} \int_{\hat{\Omega}_F} \left( J^{n+1}-J^n \right) |\boldsymbol{v}^{n+1}|^2 d \boldsymbol x. \label{kineticTerm}
\end{align}
To handle the last term in~\eqref{kineticTerm}, we use the~\emph{Geometric Conservation Law}~\cite{boffi2004stability,nobile1999stability,lukavcova2013kinematic,donea2004arbitrary} given as
\begin{align*}
\| \boldsymbol v^{n+1} \|^2_{L^2(\Omega_F(t^{n+1}))}
-\| \boldsymbol v^{n+1} \|^2_{L^2(\Omega_F(t^{n}))}
= \int_{t^n}^{t^{n+1}}\left(  \int_{\Omega_F(t)}|\boldsymbol v^{n+1}|^2 \nabla \cdot \boldsymbol w d \boldsymbol x \right) dt.
\end{align*}
Since we consider a linear time variation for the displacement
of the points of the fluid domain, the domain velocity is constant in time interval $[t^n, t^{n+1}]$. In that case, it has been shown in~\cite{lesoinne1996geometric} that the Geometric Conservation Law is exactly satisfied if the midpoint formula is used for time-integration in two-dimensions,  yielding
\begin{align}
\| \boldsymbol v^{n+1} \|^2_{L^2(\Omega_F(t^{n+1}))}
-\| \boldsymbol v^{n+1} \|^2_{L^2(\Omega_F(t^{n}))}
= \Delta t \int_{\Omega_F(t^{n+\frac12})}|\boldsymbol v^{n+1}|^2 \nabla \cdot \boldsymbol w^{n+\frac12} d \boldsymbol x.
\label{GCL}
\end{align}
As in~\cite{nobile2008effective}, we note that since the domain velocity is piecewise constant, we have $\boldsymbol w^{n+\frac12}=\boldsymbol w^{n+1}$. Therefore, equation~\eqref{kineticTerm} can be written as
\begin{align}
\rho_F \int_{\Omega_F(t^{n})}  \left( {\boldsymbol{v}}^{n+1}-{\boldsymbol{v}}^{n} \right)  \cdot {\boldsymbol v}^{n+1} d \boldsymbol x 
=&\frac{\rho_F}{2 } \left( \|\boldsymbol v^{n+1} \|^2_{L^2(\Omega_F(t^{n+1}))}  -\|\boldsymbol v^{n} \|^2_{L^2(\Omega_F(t^{n}))}+\|{\boldsymbol v}^{n+1}-\boldsymbol v^{n} \|^2_{L^2(\Omega_F(t^{n}))}   \right)
\notag
\\
&-\frac{\rho_F \Delta t}{2}\int_{\Omega_F(t^{n+\frac12})}|\boldsymbol v^{n+1}|^2 \nabla \cdot \boldsymbol w^{n+1} d \boldsymbol x.
\label{timeterm}
\end{align}
For the advection term, we proceed as follows:
\begin{align}
\rho_F \Delta t \int_{\Omega_F(t^{n+\frac12})}   \left(({\boldsymbol{v}}^{n}-{\boldsymbol{w}}^{n+1}) \cdot \nabla \right)  {\boldsymbol{v}}^{n+1} \cdot  {\boldsymbol{v}}^{n+1}  d \boldsymbol x
=
 \frac{\rho_F \Delta t}{2}  \int_{\Omega_F(t^{n+\frac12})} \nabla \cdot {\boldsymbol w}^{n+1}  |{\boldsymbol v}^{n+1} |^2 d\boldsymbol x 
 \notag
 \\
 + \frac{\rho_F \Delta t}{2} \int_{\Gamma_F^{in} \cup \Gamma_F^{out}} |\boldsymbol v^{n+1} |^2 \boldsymbol v^{n+1} \cdot \boldsymbol{n}_F^{n+1} dx.  
  \label{advection_simplified} 
\end{align}
To handle the interface term in~\eqref{stab1}, using~\eqref{scheme2}, we have
\begin{align}
& \Delta t \int_{\hat\Gamma} {J}^n \boldsymbol \sigma_F^n (\boldsymbol{F}^n)^{-T} \boldsymbol n_F^n  \cdot (\boldsymbol v^{n+1} - \boldsymbol \xi^{n+1}) d \boldsymbol x
\notag 
\\
&
=\frac{ \Delta t}{\alpha} \int_{\hat\Gamma} {J}^n \boldsymbol \sigma_F^n (\boldsymbol{F}^n)^{-T} \boldsymbol n_F^n  \cdot \left(   {J}^n \boldsymbol \sigma_F^n (\boldsymbol{F}^n)^{-T} \boldsymbol n_F^n -{J}^{n+1} \boldsymbol \sigma_F^{n+1} (\boldsymbol{F}^{n+1})^{-T} \boldsymbol n_F^{n+1} \right) 
\notag 
\\
&
=
\frac{ \Delta t}{2 \alpha} \left( \| {J}^n \boldsymbol \sigma_F^n (\boldsymbol{F}^n)^{-T} \boldsymbol n_F^n  \|^2_{L^2(\hat\Gamma)} 
- \| {J}^{n+1} \boldsymbol \sigma_F^{n+1} (\boldsymbol{F}^{n+1})^{-T} \boldsymbol n_F^{n+1} \|^2_{L^2(\hat\Gamma)} \right)
\notag 
\\
&
\qquad
 +\frac{ \Delta t}{2 \alpha} \| {J}^n \boldsymbol \sigma_F^n (\boldsymbol{F}^n)^{-T} \boldsymbol n_F^n-{J}^{n+1} \boldsymbol \sigma_F^{n+1} (\boldsymbol{F}^{n+1})^{-T} \boldsymbol n_F^{n+1} \|^2_{L^2(\hat\Gamma) }
\notag
\\
&=
\frac{ \Delta t}{2 \alpha} \left( \| {J}^n \boldsymbol \sigma_F^n (\boldsymbol{F}^n)^{-T} \boldsymbol n_F^n  \|^2_{L^2(\hat\Gamma)} 
- \| {J}^{n+1} \boldsymbol \sigma_F^{n+1} (\boldsymbol{F}^{n+1})^{-T} \boldsymbol n_F^{n+1} \|^2_{L^2(\hat\Gamma)} \right)
\notag
\\
&
\qquad  +\frac{ \alpha \Delta t }{2 }  \| \boldsymbol v^{n+1}  - \boldsymbol \xi^{n+1} \|^2_{L^2(\hat\Gamma)}.
  \label{energyB}
\end{align}
To estimate the forcing terms, we use the Cauchy-Schwarz, Young's, Poincare and Korn's inequalities as follows:
\begin{align}
 &
 -\Delta t \int_{\Gamma_F^{in}} p_{in} \boldsymbol v_h^{n+1} \cdot \boldsymbol n_F
  -\Delta t \int_{\Gamma_F^{out}} p_{out} \boldsymbol v_h^{n+1} \cdot \boldsymbol n_F
  \notag
  \\
 &
  \leq 
  \frac{\Delta t C_P^2 C_K^2 }{2 \mu_F} \| p_{in} \|^2_{L^2(\Gamma_F^{in})}
  +  \frac{\Delta t C_P^2 C_K^2}{2 \mu_F} \| p_{out} \|^2_{L^2(\Gamma_F^{in})}
  +\Delta t \mu_F \| \boldsymbol D( \boldsymbol v^{n+1}) \|^2_{L^2(\Omega_F(t^{n+1}))}.
    \label{inequality} 
\end{align}
Using~\eqref{timeterm}-\eqref{inequality} in~\eqref{stab1} and summing from $n=0$ to $N-1$ completes the proof.

\end{proof}

\begin{remark}
Similarly as in~\cite{nobile2008effective,badia2009robin,badia2008fluid,gerardo2010analysis}, the method proposed here is developed using generalized Robin boundary conditions. However, in this work, generalized Robin boundary conditions are designed and discretized in a novel way, leading to an unconditionally stable scheme which does not require sub-iterations. As opposed to the previous work, where two combination parameters are introduced, we have only one combination parameter, $\alpha$. 

This method also exhibits similarities to the method proposed in~\cite{burman2009stabilization}. In particular, the weak form of the partitioned scheme presented in this work is similar to the incomplete version of the explicit method presented in~\cite{burman2009stabilization}, which was obtained by enforcing coupling conditions using Nitsche's penalty method.  However, only conditional stability was proved for the method presented in~\cite{burman2009stabilization} after a stabilization term was added.   
\end{remark}

\section{Convergence analysis}\label{conv}
To analyze the convergence of the fully discrete proposed method, we  assume that the fluid is described by the time-dependent Stokes equations, that the structure deformation is infinitesimal and that the fluid-structure interaction is linear. 
 These assumptions are common  in the analysis of partitioned schemes for FSI problems as the main difficulties related to the splitting between the fluid and structure sub-problems are still present~\cite{fernandez2015generalized,bukavc2016stability,burman2009stabilization,banks2014analysis2}. Therefore, to simplify the notation, in the following we will omit the hat notation. 
The resulting numerical method is given by:

\noindent \textbf{Structure sub-problem:} Find ${\boldsymbol \eta}^{n+1}$ and $\boldsymbol \xi^{n+1} = d_t {\boldsymbol \eta}^{n+1}$ such that
\begin{align}
& {\rho}_S d_t  {\boldsymbol \xi}^{n+1}  =  {\nabla} \cdot \boldsymbol \sigma_S (\boldsymbol \eta^{n+1}) &  \textrm{in}\; {\Omega}_S,  
\label{Ssolid}\\
& \alpha {\boldsymbol \xi}^{n+1} +  \boldsymbol \sigma_S ({\boldsymbol \eta}^{n+1}) {\boldsymbol n}_S=\alpha{\boldsymbol{v}}^{n}  -  \boldsymbol \sigma_F (\boldsymbol v^{n}, p^{n})  \boldsymbol n_F & \textrm{on} \; {\Gamma}. 
\end{align}

\noindent \textbf{Fluid sub-problem:}  Find $\boldsymbol v^{n+1}$ and $p^{n+1}$ such that
\begin{align}
& \rho_F  d_t  \boldsymbol{v}^{n+1}  = \nabla \cdot \boldsymbol\sigma_F({\boldsymbol v}^{n+1}, {p}^{n+1})  &  \textrm{in}\; {\Omega}_F, 
\\
& \nabla \cdot {\boldsymbol{v}}^{n+1} = 0 &  \textrm{in}\; {\Omega}_F, \\
&  \alpha \boldsymbol{v}^{n+1}
+ \boldsymbol \sigma_F (\boldsymbol v^{n+1}, p^{n+1}) \boldsymbol n_F
= 
 \alpha \boldsymbol \xi^{n+1} 
 + {\boldsymbol \sigma}_F(\boldsymbol v^{n}, p^{n}) {\boldsymbol n}_F
  & \textrm{on} \; {\Gamma}.
  \label{Sfluid}
\end{align}

To discretize~\eqref{Ssolid}-\eqref{Sfluid} in space, we use the finite element method. The finite element spaces are  defined as the subspaces $V^F_h \subset V^F, Q^F_h \subset Q^F$ and $V^S_h \subset V^S$ based on a conforming finite element triangulation with maximum triangle diameter $h$. We  assume that spaces $V^F_h$ and $Q^F_h$ are \textit{inf-sup} stable and that the fluid boundary conditions are~\eqref{inlet}-\eqref{outlet}.
The weak formulation of the scheme is given as follows: 

\noindent \textbf{Structure sub-problem:} Find $\boldsymbol \xi_h^{n+1} \in V^S_h$ and $\boldsymbol \eta_h^{n+1} \in V^S_h$, where $\boldsymbol \xi^{n+1}_h = d_t \boldsymbol \eta_h^{n+1}$, such that for all $\boldsymbol \zeta_h \in V^S_h$ we have
\begin{align}
\rho_S \int_{{\Omega}_S} d_t \boldsymbol \xi_h^{n+1} \cdot \boldsymbol \zeta_h d \boldsymbol x + a_S(\boldsymbol \eta_h^{n+1}, \boldsymbol \zeta_h)
+\alpha \int_{{\Gamma}} (\boldsymbol \xi_h^{n+1} - \boldsymbol v_h^n  ) \cdot \boldsymbol \zeta_h d \boldsymbol x
= -\int_{\Gamma} \boldsymbol \sigma_F (\boldsymbol v_h^{n}, p_h^{n})  \boldsymbol n_F \cdot \boldsymbol \zeta_h d\boldsymbol x. \label{SSweak}
\end{align}

\noindent \textbf{Fluid sub-problem:} Find $\boldsymbol v_h^{n+1} \in V^F_h$ and $p_h^{n+1} \in Q^F_h$ such that for all $\boldsymbol \phi_h \in V^F_h$ and $\psi_h \in Q^F_h$ we have
\begin{align}
&\rho_F \int_{\Omega_F}  d_t \boldsymbol{v}_h^{n+1}  \cdot {\boldsymbol \phi_h} d \boldsymbol x 
+a_F({\boldsymbol v}_h^{n+1}, \boldsymbol \phi_h)
-b_F({p}^{n+1}_h, \boldsymbol \phi_h)
+b_F(\psi_h, {\boldsymbol v}^{n+1}_h)
+ \alpha  \int_{\Gamma}(\boldsymbol v_h^{n+1} - \boldsymbol \xi_h^{n+1} ) \cdot \boldsymbol \phi_h d \boldsymbol x
\notag
\\
&
\quad
 =\int_{\Gamma} {\boldsymbol \sigma}_F({\boldsymbol v}_h^{n}, {p}_h^{n}) {\boldsymbol n}_F  \cdot \boldsymbol \phi_h d \boldsymbol x
 -\int_{\Gamma_F^{in}  } p_{in}(t)  \boldsymbol \phi_h \cdot \boldsymbol n_F dx
 -\int_{  \Gamma_F^{out}} p_{out}(t) \boldsymbol \phi_h \cdot \boldsymbol n_F  dx.
\label{SFweak}
 \end{align}

For spatial discretization, we  use the Lagrangian finite elements
of polynomial degree $k$ for all  variables except for the fluid pressure
for which we use elements of degree $r<k$.  
 % We assume that our finite element spaces satisfy the usual approximation properties, and that the fluid velocity-pressure spaces  satisfy the discrete \emph{inf-sup} condition. 
Assume that the continuous solution satisfies the following assumptions:
\begin{align}
&\boldsymbol v \in  L^{\infty}(0,T; H^{k+1}(\Omega_F)) \cap H^1(0,T; H^{k+1}(\Omega_F))\cap H^2(0,T; L^2(\Omega_F)), \label{reg1} \\
&\boldsymbol v|_{\Gamma} \in L^{\infty} (0,T; H^{k+1}(\Gamma)) \cap H^1(0,T; H^{k+1}(\Gamma)) , \\
& p  \in L^2(0,T; H^{r+1}(\Omega_F)),\;\;  p|_{\Gamma} \in H^{1}(0,T; L^2(\Gamma)), \\
&\boldsymbol \eta  \in W^{1,\infty}(0,T; H^{k+1}(\Omega_S)) \cap H^2(0,T; H^{k+1}(\Omega_S))\cap H^3(0,T; L^2(\Omega_S)). \label{reg2}
\end{align}

Let $a \ltt(\gtt) b$ denote that there exists a positive constant $C$, independent of $h$ and $\Delta t$, such that $a \leq(\geq) C b$.
 We introduce the following time discrete norms:
 \begin{equation*}
\|\boldsymbol \varphi\|_{L^2(0,T; X)} = \left(\Delta t \sum_{n=0}^{N-1}
\|\boldsymbol \varphi^{n+1}\|^2_{X} \right)^{\frac12}, \quad
\| \boldsymbol \varphi\|_{L^{\infty}(0,T; X)} = \max_{0 \le n \le N} \|\boldsymbol
\varphi^n \|_{X}, \label{tdiscnorm}
 \end{equation*}
where $X \in \{H^k(\Omega_F), H^k(\Omega_S), H^k(\Gamma),  S\}$.  Note that they are equivalent
to the continuous norms since we use piecewise constant approximations in
time. 
Furthermore, the following inequality holds:
\begin{equation*}
\Delta{t}\sum_{n=1}^{N-1} \| d_t\boldsymbol \varphi^{n+1} \|^2_{X}
\ltt \| \partial_t\boldsymbol \varphi \|^2_{L^2(0,T; X)}.
\label{ineq}
\end{equation*}

Let $P_h$ be the Lagrangian interpolation operator onto $V^S_h.$ Then, $I_h := P_h|_{\Gamma}$ is a Lagrangian interpolation operator.  Similar as in~\cite{bukavc2016stability,Fernandez2012incremental}, we introduce a Stokes-like projection
operator $(S_h, R_h): V^F \rightarrow V^F_h \times Q_h^F$, defined for all
$\boldsymbol v \in V^F$ by
\begin{align}
& (S_h \boldsymbol v, R_h \boldsymbol v) \in V^F_h \times Q^F_h,  \\
& (S_h \boldsymbol v)|_{\Gamma} = I_h (\boldsymbol v|_{\Gamma}),   \\
& a_F (S_h \boldsymbol v, \boldsymbol \varphi_h)
-b_F (R_h \boldsymbol v, \boldsymbol \varphi_h) 
= a_F (\boldsymbol v, \boldsymbol \varphi_h), \; \forall \boldsymbol \varphi_h \in V^F_h \; \textrm{such that} \; \boldsymbol \varphi_h|_{\Gamma}=0, \\
&b_F( q, S_h \boldsymbol v) = 0, \quad \forall q \in Q^F_h.   \label{press_proj}
\end{align} 
Projection operators $S_h$  and $I_h$ satisfy the following approximation properties
(see~\cite{ciarlet1978finite,bukavc2012fluid}):  
\begin{align}
& \| \boldsymbol D( \boldsymbol v - S_h \boldsymbol v) \|_{L^2(\Omega_F)} \ltt h^k \| \boldsymbol
v \|_{H^{k+1}(\Omega_F)} \quad \textrm{for all} \; \boldsymbol v \in V^F,
  \label{app1}
\\
&
\| \boldsymbol \xi - I_h \boldsymbol \xi \|_{L^2(\Gamma)}
 + h \| \boldsymbol \xi - I_h \boldsymbol \xi \|_{H^1(\Gamma)} \ltt h^{k+1} \| \boldsymbol \xi
\|_{H^{k+1}(\Gamma)}  \quad \textrm{for all} \; \boldsymbol \xi \in V^S. 
\end{align}
Let $\Pi_h$ be a projection operator onto $Q_h^F$ such that
\begin{equation}
\| p - \Pi_h p \|_{L^2(\Omega_F)} \ltt  h^{r+1} \| p \|_{H^{r+1}(\Omega_F)},
\quad \textrm{for all} \;  p \in Q^F.
\end{equation}
Let 
$R_h$ be the  Ritz projector onto  $V_h^S$ such that for all $\boldsymbol
\eta \in V^S$,
\begin{equation}
a_S(\boldsymbol \eta - R_h \boldsymbol \eta, \boldsymbol \chi_h) = 0  \quad
\textrm{for all} \; \boldsymbol \chi_h \in V_h^S. \label{Ritz}
\end{equation}
Then, the finite element theory for Ritz projections~\cite{ciarlet1978finite}
gives
\begin{equation}
\| \boldsymbol \eta - R_h \boldsymbol \eta \|_{S}  \ltt  h^{k} \|
\boldsymbol \eta \|_{H^{k+1}(\Gamma)} \quad \textrm{for all} \; \boldsymbol \eta \in
V^S.
 \label{app2}
\end{equation}
In the following, in addition to standard inequalities~\cite{bukavc2012fluid}, we will also use the discrete trace-inverse inequality: For a triangular domain $\Omega_F \subset \mathbb{R}^2$ there exists a positive
constant $C_{TI}$ depending on the angles in the finite element mesh such that
\begin{gather}
\| \boldsymbol v_h \|^2_{L^2(\Gamma)} \leq \frac{C_{TI} k^2}{h} \| \boldsymbol v_h \|^2_{L^2(\Omega_F)},  \label{traceinverse}
\end{gather}
for all $\boldsymbol v_h \in V_h.$

We assume that the continuous fluid velocity belongs to the space $V^{FD}=\{\boldsymbol v \in V^F | \; \nabla \cdot \boldsymbol v =0\}$. Since the test functions for the partitioned scheme do not satisfy the  kinematic coupling condition, we start by deriving the monolithic variational formulation with the test functions in $V_h^S \times V_h^{F} \times Q^F_h$: Find $(\boldsymbol \xi^{n+1} =\partial_t \boldsymbol \eta^{n+1}, \boldsymbol v^{n+1}, p^{n+1}) \in  V^{S}  \times V^F \times Q^F$ with $\boldsymbol v^{n+1} = \boldsymbol \xi^{n+1}$ on $\Gamma$ such that for all $(\boldsymbol \zeta_h, \boldsymbol \phi_h) \in V_h^{S}\times V_h^F$ we have
\begin{align}
& \rho_F \int_{\Omega_F} \partial_t \boldsymbol v^{n+1} \cdot \boldsymbol \phi_h 
+a_F(\boldsymbol v^{n+1}, \boldsymbol \phi_h)
-b_F(p^{n+1}, \boldsymbol \phi_h)
+ \rho_S \int_{\Omega_S} \partial_t \boldsymbol \xi_h^{n+1} \cdot  \boldsymbol \zeta_h 
+a_S(\boldsymbol \eta, \boldsymbol \zeta_h)
\notag \\
 &= \int_{\Gamma} \boldsymbol \sigma_F (\boldsymbol v^{n+1},p^{n+1}) \boldsymbol n_F \cdot (\boldsymbol \phi_h - \boldsymbol \zeta_h) 
- \int_{\Gamma_{in}} p_{in}(t^{n+1}) \boldsymbol \phi_h \cdot \boldsymbol n
- \int_{\Gamma_{out}} p_{out}(t^{n+1}) \boldsymbol \phi_h \cdot \boldsymbol n.
\label{monoweak}
\end{align}
Subtracting~\eqref{SSweak}-\eqref{SFweak} from~\eqref{monoweak}, we obtain the following error equation:
\begin{align}
& \rho_F \int_{\Omega_F} d_t( \boldsymbol v^{n+1}- \boldsymbol v_h^{n+1})  \cdot \boldsymbol \phi_h 
+a_F(\boldsymbol v^{n+1} - \boldsymbol v_h^{n+1}, \boldsymbol \phi_h)
-b_F (p^{n+1}-p_h^{n+1}, \boldsymbol \phi_h)
-b_F( \psi_h, \boldsymbol v_h^{n+1})
\notag \\
& \;
+ \rho_S \int_{\Omega_S}  d_t( \boldsymbol \xi^{n+1}- \boldsymbol \xi_h^{n+1})  \cdot \boldsymbol \zeta_h  
+a_S(\boldsymbol \eta^{n+1} - \boldsymbol \eta_h^{n+1}, \boldsymbol \zeta_h)
+\alpha \int_{\Gamma} (\boldsymbol \xi^{n+1} - \boldsymbol \xi_h^{n+1} - \boldsymbol v^{n}+\boldsymbol v_h^n) \cdot \boldsymbol \zeta_h d \boldsymbol x
\notag \\
& \;
+\alpha  \int_{\Gamma}(\boldsymbol v^{n+1} - \boldsymbol v_h^{n+1}-\boldsymbol \xi^{n+1} + \boldsymbol \xi_h^{n+1}) \cdot \boldsymbol \phi_h d \boldsymbol x
\notag \\
&
 = \int_{\Gamma} \boldsymbol \sigma_F (\boldsymbol v^{n} - \boldsymbol v_h^n,p^{n}-p_h^n) \boldsymbol n_F \cdot (\boldsymbol \phi_h - \boldsymbol \zeta_h) 
+\mathcal{R}_1 (\boldsymbol \phi_h, \boldsymbol \zeta_h),
 \label{erroreq}
\end{align}
 for all $(\boldsymbol \zeta_h, \boldsymbol \phi_h, \psi_h) \in V_h^{S}\times V_h^F \times Q_h^F$, where, since $\boldsymbol v^{n+1}=\boldsymbol \xi^{n+1}$ on  $\Gamma,$  
\begin{align*}
\mathcal{R}_1 (\boldsymbol \phi_h, \boldsymbol \zeta_h)  &= \rho_F \int_{\Omega_F} (d_t \boldsymbol v^{n+1} - \partial_t \boldsymbol v^{n+1}) \cdot \boldsymbol \phi_h  
+\rho_S \int_{\Omega_S} (d_t \boldsymbol \xi^{n+1} - \partial_t \boldsymbol \xi^{n+1}) \cdot \boldsymbol \zeta_h  
\notag \\
& \; +\alpha \int_{\Gamma} (\boldsymbol v^{n+1}  - \boldsymbol v^{n}) \cdot \boldsymbol \zeta_h d \boldsymbol x
+\int_{\Gamma} \boldsymbol \sigma_F (\boldsymbol v^{n+1} - \boldsymbol v^n,p^{n+1}-p^n) \boldsymbol n_F \cdot  (\boldsymbol \phi_h - \boldsymbol \zeta_h) .
\end{align*}

We split the error of the method as a sum of the approximation error, $\theta_r^{n+1}$,
and the truncation error, $\delta_r^{n+1},$ for $r \in \{F,P,\eta, \xi \}$
as follows:
\begin{align}
\boldsymbol e_F^{n+1} & =\boldsymbol v^{n+1}-\boldsymbol v_h^{n+1} = (\boldsymbol v^{n+1}-S_h
\boldsymbol v^{n+1})+(S_h \boldsymbol v^{n+1}-\boldsymbol v_h^{n+1}) = \boldsymbol \theta_F^{n+1}+\boldsymbol \delta_F^{n+1},
\label{error1}\\
e_P^{n+1} & =p^{n+1}-p_h^{n+1} = (p^{n+1}-\Pi_h p^{n+1})+(\Pi_h p^{n+1}-p_h^{n+1})
= \theta_P^{n+1}+\delta_P^{n+1}, 
\label{errorpom2}\\
\boldsymbol e_\eta^{n+1} & =\boldsymbol \eta^{n+1}-\boldsymbol \eta_h^{n+1} = (\boldsymbol \eta^{n+1}-R_h \boldsymbol \eta^{n+1})+(R_h \boldsymbol \eta^{n+1}-\boldsymbol
\eta_h^{n+1}) = \boldsymbol \theta_{\eta}^{n+1}+ \boldsymbol \delta_{\eta}^{n+1},\\
\boldsymbol e_{\xi}^{n+1} & =\boldsymbol \xi^{n+1}-\boldsymbol \xi_h^{n+1} = (\boldsymbol \xi^{n+1}-P_h \boldsymbol \xi^{n+1})+(P_h \boldsymbol \xi^{n+1}-\boldsymbol
\xi_h^{n+1}) = \boldsymbol \theta_{\xi}^{n+1}+\boldsymbol \delta_{\xi}^{n+1}.
\label{error2}
\end{align}
The main result of this section is stated in the following theorem.

\begin{theorem}\label{MainThm}
Consider the solution $(\boldsymbol \xi_h, \boldsymbol \eta_h, \boldsymbol v_h, p_h)$ of~\eqref{SSweak}-\eqref{SFweak}, with discrete initial data
given by $(\boldsymbol \xi_h^0, \boldsymbol \eta_h^0, \boldsymbol v_h^0, p_h^0) = (P_h \boldsymbol \xi^0, R_h \boldsymbol \eta^0, S_h \boldsymbol v^0, \Pi_h p^0)$.   Assume that the exact solution satisfies assumptions~\eqref{reg1}-\eqref{reg2} and that the following inequality is satisfied:
\begin{gather}
\Delta t \leq \frac{\rho_F}{\alpha C_{TI} k^2} h.
\label{CFLconv}
\end{gather}
 Then, the following estimate holds:
  \begin{align}
& \frac{\rho_F}{2} \|\boldsymbol e_F^{N}  \|^2_{L^2(\Omega_F)} 
+\frac{\rho_S}{2} \|\boldsymbol e_\xi^{N}   \|^2_{L^2(\Omega_S)} 
+\frac12 \| \boldsymbol e_{\eta}^{N} \|^2_{S}
+ \frac{\alpha \Delta t}{2}    \| \boldsymbol e_F^{N} \|^2_{L^2(\Gamma)} 
%+\frac{\Delta t}{2 \alpha} \| \boldsymbol{\sigma}_F(\boldsymbol e_F^N, e_P^N) \boldsymbol n_F\|^2_{L^2(\Gamma)}
+ \mu_F \Delta t \sum_{n=0}^{N-1} \| \boldsymbol{D} ( \boldsymbol e_F^{N} ) \|^2_{L^2(\Omega_F)}
\notag \\
&
  \ltt
e^T \left(
h^{2k+2}  \mathcal{A}_0
+h^{2r+2} \mathcal{A}_1
+h^{2k}  \mathcal{A}_2
+\Delta t^2 h^{2k+2} \mathcal{A}_3
+\Delta t^2 \mathcal{A}_4
+\Delta t \mathcal{A}_{5}
    \right),
\notag 
\end{align}
where
\begin{align}
\mathcal{A}_0 & =  \rho_S \| \boldsymbol \xi \|^2_{L^{\infty}(0,T; H^{k+1}(\Omega_S))} 
   +\rho_S \| \partial_t \boldsymbol \xi \|^2_{L^2(0,T; H^{k+1}(\Omega_S))},
 \notag \\
\mathcal{A}_1 & =    \frac{1 }{\mu_F}   \|p\|^2_{L^2(0,T; H^{r+1}(\Omega_F))},
  \notag \\
  \mathcal{A}_2 & = 
\rho_F \| \boldsymbol v \|^2_{L^{\infty}(0,T; H^{k+1}(\Omega_F))}
 + \| \boldsymbol \eta \|^2_{L^{\infty}(0,T; H^{k+1}(\Omega_S))}
 + \| \boldsymbol {\xi} \|^2_{L^2(0,T: H^{k+1}(\Omega_S))}
  \notag \\
  &
 \; \;\; 
  +\frac{ \rho_F^2}{\mu_F} \| \partial_t  \boldsymbol v \|^2_{L^2(0,T: H^{k+1}(\Omega_F))} 
+ \mu_F \| \boldsymbol v \|^2_{L^2(0,T; H^{k+1}(\Omega_F))},
       \notag \\
       \mathcal{A}_3 & =     \left(\frac{\alpha^2  }{\mu_F}+\alpha \right)  \| \partial_t \boldsymbol v \|^2_{L^2(0,T; H^{k+1}(\Gamma))},
\notag \\ 
\mathcal{A}_4 & =  
\frac{ \rho_F^2 }{ \mu_F} \| \partial_{tt} \boldsymbol v \|^2_{L^2(0,T; L^2(\Omega_F))} 
 + \rho_S  \| \partial_{tt} \boldsymbol \xi \|^2_{L^2(0,T; L^2(\Omega_S))} 
+\alpha  \left(\frac{\alpha}{2 \mu_F}+1\right) \| \partial_t \boldsymbol v \|^2_{L^2(0,T; L^2(\Gamma))}
\notag \\
&
\;\;\; + \frac{1}{\alpha} \| \partial_t \boldsymbol \sigma_F  \boldsymbol n_F  \|^2_{L^2(0,T: L^2(\Gamma))}
+ \|\partial_{tt} \boldsymbol \eta\|^2_{L^2(0,T; S)},
 \notag \\
\mathcal{A}_{5} & =   \frac{1}{\alpha}  \| \partial_t  \boldsymbol \sigma_F \boldsymbol n_F \|^2_{L^2(0,T; L^2(\Gamma))}.
\notag
\end{align}

\if 1=0
\begin{align}
& \frac{\rho_F}{2} \|\boldsymbol v^{N} - \boldsymbol v_h^{N} \|^2_{L^2(\Omega_F)} 
+\frac{\rho_S}{2} \|\boldsymbol \xi^{N}   - \boldsymbol \xi_h^{N} \|^2_{L^2(\Omega_S)} 
+\frac12 \| \boldsymbol {\eta}^{N} - \boldsymbol {\eta}_h^{N}\|^2_{S}
+ \frac{\alpha \Delta t}{2}    \| \boldsymbol v^{N} - \boldsymbol v_h^{N} \|^2_{L^2(\Gamma)} 
\notag \\
&
+ \mu_F \Delta t \sum_{n=0}^{N-1} \| \boldsymbol{D} ( \boldsymbol v^{N} - \boldsymbol v_h^{N} ) \|^2_{L^2(\Omega_F)}
%+  \frac{\Delta t}{2 \alpha}    \| \boldsymbol \sigma_F(\boldsymbol \delta_F^{N}, \delta_P^{N}) \boldsymbol n_F \|^2_{L^2(\Gamma)}
\notag \\
& 
 \ltt
e^T
\left( h^{2k+2} \left(
  \rho_S \| \boldsymbol \xi \|^2_{L^{\infty}(0,T; H^{k+1}(\Omega_S))} 
   +\rho_S \| \partial_t \boldsymbol \xi \|^2_{L^2(0,T; H^{k+1}(\Omega_S))}
 \right)
 \right.
 \notag \\
 &
+ h^{2s+2}
 \left( \frac{1 }{\mu_F}   \|p\|^2_{L^2(0,T; H^{s+1}(\Omega_F))}
  +\frac{1 }{\alpha} \| p \|^2_{L^2(0,T; H^{s+1}(\Gamma))}
  \right)
  \notag \\
  &
+ h^{2k} \left(
\rho_F \| \boldsymbol v \|^2_{L^{\infty}(0,T; H^{k+1}(\Omega_F))}
 + \| \boldsymbol \eta \|^2_{L^{\infty}(0,T; H^{k+1}(\Omega_S))}
 + \| \boldsymbol {\xi} \|^2_{L^2(0,T: H^{k+1}(\Omega_S))}
 \right.
  \notag \\
  &
 \; \;\; \left.
  +\frac{ \rho_F^2}{\mu_F} \| \partial_t  \boldsymbol v \|^2_{L^2(0,T: H^{k+1}(\Omega_F))} 
+ \mu_F \| \boldsymbol v \|^2_{L^2(0,T; H^{k+1}(\Omega_F))}
 +\frac{ \mu_F^2}{ \alpha}  \| \boldsymbol v \|^2_{L^2(0,T; H^{k+1}(\Gamma))}
 \right)
\notag 
\\
& 
+\Delta t^2 \left(
\frac{ \rho_F^2 }{ \mu_F} \| \partial_{tt} \boldsymbol v \|^2_{L^2(0,T; L^2(\Omega_F))} 
 + \rho_S  \| \partial_{tt} \boldsymbol \xi \|^2_{L^2(0,T; L^2(\Omega_S))} 
+\alpha  \left(\frac{\alpha}{2 \mu}+1\right) \| \partial_t \boldsymbol v \|^2_{L^2(0,T; L^2(\Gamma))}
\right.
\notag \\
&
\left.
\;\;\; + \frac{1}{\alpha} \| \partial_t \boldsymbol \sigma_F  \boldsymbol n_F  \|^2_{L^2(0,T: L^2(\Gamma))}
+ \|\partial_{tt} \boldsymbol \eta\|^2_{L^2(0,T; L^2(S))} 
\right)
\notag \\
&
   + \Delta t^2  h^{2k+2} \left(\frac{\alpha^2  }{\mu}+\alpha \right)  \| \partial_t \boldsymbol v \|^2_{L^2(0,T; H^{k+1}(\Gamma))}
    + \frac{\Delta t^2 h^{2s+2}}{\alpha} \|\partial_t p \|^2_{L^2(0,T;H^{s+1}(\Gamma))}
       +\Delta t  h^{2k+2} \alpha\| \boldsymbol \xi \|^2_{L^{\infty}(0,T; H^{k+1}(\Gamma))} 
 \notag \\
 &  \;    
 +\frac{\Delta t^2 h^{2k}  \mu_F^2}{\alpha} \|\partial_t \boldsymbol v \|^2_{L^2(0,T;H^{k+1}(\Gamma))}
 + \frac{\Delta t h^{2s+2}}{\alpha} \|\partial_t p \|^2_{L^2(0,T;H^{s+1}(\Gamma))}
 +\frac{\Delta t h^{2k} \mu_F^2}{\alpha} \|\partial_t \boldsymbol v \|^2_{L^2(0,T;H^{k+1}(\Gamma))}
 \notag
\\
& 
\left.
+\frac{\Delta t}{\alpha}  \| \partial_t  \boldsymbol \sigma_F \boldsymbol n_F \|^2_{L^2(0,T; L^2(\Gamma))}
\right)
\end{align}
\fi
\end{theorem}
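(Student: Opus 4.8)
The plan is to run the energy method on the monolithic error equation \eqref{erroreq}, pushing the approximation (projection) errors $\boldsymbol\theta_\bullet$ to the right-hand side and controlling the fully-discrete errors $\boldsymbol\delta_\bullet$, then to close with a discrete Gronwall argument. First I would test \eqref{erroreq} with the discrete error components scaled by $\Delta t$, namely $\boldsymbol\phi_h=\Delta t\,\boldsymbol\delta_F^{n+1}$, $\boldsymbol\zeta_h=\Delta t\,\boldsymbol\delta_\xi^{n+1}$, and $\psi_h=\Delta t\,\delta_P^{n+1}$, and split each error via \eqref{error1}--\eqref{error2}. Applying to the two discrete time-derivative terms and to the elastic term the polarization identity $2(a-b,a)=\|a\|^2-\|b\|^2+\|a-b\|^2$ already used in \eqref{kineticTerm} produces the telescoping energy $\tfrac{\rho_F}{2}\|\boldsymbol\delta_F^{n+1}\|^2_{L^2(\Omega_F)}+\tfrac{\rho_S}{2}\|\boldsymbol\delta_\xi^{n+1}\|^2_{L^2(\Omega_S)}+\tfrac12\|\boldsymbol\delta_\eta^{n+1}\|^2_S$ together with nonnegative numerical-dissipation squares; the viscous term supplies $2\mu_F\Delta t\,\|\boldsymbol D(\boldsymbol\delta_F^{n+1})\|^2_{L^2(\Omega_F)}$, and the two $\alpha$-interface terms combine, exactly as in \eqref{energyB}, into the telescoping boundary energy $\tfrac{\alpha\Delta t}{2}\|\boldsymbol\delta_F^{n+1}\|^2_{L^2(\Gamma)}$ plus a controllable square.

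Next I would exploit the projection orthogonalities to discard the inconvenient approximation-error cross terms. By the Ritz property \eqref{Ritz} the elastic cross term $a_S(\boldsymbol\theta_\eta^{n+1},\boldsymbol\delta_\xi^{n+1})$ vanishes (up to a projection/consistency piece coming from $\boldsymbol\delta_\xi$ versus $d_t\boldsymbol\delta_\eta$, which is moved to the residual), while the defining relations \eqref{press_proj} of the Stokes-like projection eliminate the interior viscous and pressure contributions of $(\boldsymbol\theta_F,\theta_P)$ and neutralize the incompressibility mismatch $b_F(\delta_P,\boldsymbol v_h^{n+1})$. The surviving $\boldsymbol\theta$-terms — the mass terms $\rho_F\int d_t\boldsymbol\theta_F^{n+1}\cdot\boldsymbol\delta_F^{n+1}$, $\rho_S\int d_t\boldsymbol\theta_\xi^{n+1}\cdot\boldsymbol\delta_\xi^{n+1}$, and the remaining interface and pressure-interpolation contributions — I would bound by Cauchy--Schwarz and Young's inequality, splitting off only small multiples of the kinetic and dissipation norms and invoking the approximation estimates \eqref{app1}--\eqref{app2}; these account for the $h^{2k+2}$, $h^{2r+2}$, and $h^{2k}$ terms collected in $\mathcal{A}_0,\mathcal{A}_1,\mathcal{A}_2$.

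The main obstacle is the explicitly lagged interface stress term $\int_{\Gamma}\boldsymbol\sigma_F(\boldsymbol e_F^{n}-,e_P^{n})\boldsymbol n_F\cdot(\boldsymbol\phi_h-\boldsymbol\zeta_h)$ on the right of \eqref{erroreq}, which encodes the loose coupling by tying the fluid-stress error at $t^n$ to the velocity errors at $t^{n+1}$. Mimicking the manipulation \eqref{energyB}, I would use the discrete Robin structure of \eqref{SSweak}--\eqref{SFweak} to re-express this lagged stress error through $\boldsymbol\delta_F$ at consecutive levels, producing a telescoping $\tfrac{\Delta t}{2\alpha}\|\boldsymbol\sigma_F(\boldsymbol\delta_F^{n},\delta_P^{n})\boldsymbol n_F\|^2_{L^2(\Gamma)}$-type quantity. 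The genuine danger is the uncontrolled boundary norm $\|\boldsymbol\delta_F^{n+1}\|_{L^2(\Gamma)}$: here the discrete trace–inverse inequality \eqref{traceinverse} gives $\|\boldsymbol\delta_F^{n+1}\|^2_{L^2(\Gamma)}\le \tfrac{C_{TI}k^2}{h}\|\boldsymbol\delta_F^{n+1}\|^2_{L^2(\Omega_F)}$, and the time-step restriction \eqref{CFLconv}, $\Delta t\le \tfrac{\rho_F}{\alpha C_{TI}k^2}h$, is precisely what lets the resulting factor $\alpha\Delta t\,\tfrac{C_{TI}k^2}{h}$ be absorbed into the kinetic term $\tfrac{\rho_F}{2}\|\boldsymbol\delta_F^{n+1}\|^2_{L^2(\Omega_F)}$. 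Making all constants close in this absorption is the delicate technical point, and it is the one place where the CFL condition is unavoidable.

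Finally I would estimate the consistency residual $\mathcal{R}_1(\boldsymbol\phi_h,\boldsymbol\zeta_h)$ term by term: Taylor expansion of $d_t\boldsymbol v^{n+1}-\partial_t\boldsymbol v^{n+1}$ and $d_t\boldsymbol\xi^{n+1}-\partial_t\boldsymbol\xi^{n+1}$ yields the $\Delta t^2$ contributions $\tfrac{\rho_F^2}{\mu_F}\|\partial_{tt}\boldsymbol v\|^2$, $\rho_S\|\partial_{tt}\boldsymbol\xi\|^2$, $\|\partial_{tt}\boldsymbol\eta\|^2_S$ of $\mathcal{A}_4$, while the time-lag differences $\boldsymbol v^{n+1}-\boldsymbol v^n$ and $\boldsymbol\sigma_F(\boldsymbol v^{n+1}-\boldsymbol v^n,p^{n+1}-p^n)\boldsymbol n_F$ on $\Gamma$ produce both the $\Delta t^2 h^{2k+2}$ term $\mathcal{A}_3$ and, crucially, the $\Delta t$ consistency term $\mathcal{A}_5=\tfrac1\alpha\|\partial_t\boldsymbol\sigma_F\boldsymbol n_F\|^2_{L^2(0,T;L^2(\Gamma))}$, whose presence is the origin of the $\mathcal{O}(\Delta t^{1/2})$ rate after taking square roots. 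Summing the resulting inequality from $n=0$ to $N-1$ telescopes the energy, dissipation, and boundary terms, the numerical-dissipation squares are dropped as nonnegative, and a discrete Gronwall lemma absorbs the contributions proportional to the accumulated energy, yielding the factor $e^T$. The initial error is zero by the choice $(\boldsymbol\xi_h^0,\boldsymbol\eta_h^0,\boldsymbol v_h^0,p_h^0)=(P_h\boldsymbol\xi^0,R_h\boldsymbol\eta^0,S_h\boldsymbol v^0,\Pi_h p^0)$, and a final triangle inequality $\|\boldsymbol e_\bullet^N\|\le\|\boldsymbol\delta_\bullet^N\|+\|\boldsymbol\theta_\bullet^N\|$ together with \eqref{app1}--\eqref{app2} converts the $\boldsymbol\delta$-bounds into the stated estimate on $\boldsymbol e_F^N,\boldsymbol e_\xi^N,\boldsymbol e_\eta^N$.
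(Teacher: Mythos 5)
Your overall architecture coincides with the paper's proof: testing the error equation~\eqref{erroreq} with $\Delta t\,\boldsymbol\delta_F^{n+1}$, $\Delta t\,\boldsymbol\delta_\xi^{n+1}$, $\Delta t\,\delta_P^{n+1}$; telescoping by polarization; using~\eqref{Ritz} and~\eqref{press_proj} to handle the projection cross terms; re-expressing the lagged interface stress through the discrete Robin relation (the paper's identity~\eqref{trstres}) so that it telescopes in the stress-error norm; extracting the $\Delta t^2$ and $\Delta t$ contributions $\mathcal{A}_4$, $\mathcal{A}_5$ from the consistency residual; and closing with Gronwall and the triangle inequality. However, the one step you yourself flag as ``the delicate technical point'' --- where and how the CFL condition~\eqref{CFLconv} is used --- is described incorrectly, and as written that step fails.

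The dangerous boundary quantity is not $\|\boldsymbol\delta_F^{n+1}\|_{L^2(\Gamma)}$; that norm sits harmlessly on the left-hand side and telescopes as $\tfrac{\alpha\Delta t}{2}\bigl(\|\boldsymbol\delta_F^{n+1}\|^2_{L^2(\Gamma)}-\|\boldsymbol\delta_F^{n}\|^2_{L^2(\Gamma)}\bigr)$. The real problem is the kinematic mismatch $\|\boldsymbol\delta_F^{n+1}-\boldsymbol\delta_\xi^{n+1}\|^2_{L^2(\Gamma)}$: re-estimating the square term in the stress telescoping~\eqref{t1} via~\eqref{trstres} produces $\tfrac{\alpha\Delta t}{2}\|\boldsymbol\delta_F^{n+1}-\boldsymbol\delta_\xi^{n+1}\|^2_{L^2(\Gamma)}$, which exactly consumes the corresponding dissipation on the left, and leftover fractions of this same norm survive on the right. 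The paper controls them (see~\eqref{deltaDiff}) by adding and subtracting $\boldsymbol\delta_F^n$: the piece $\|\boldsymbol\delta_\xi^{n+1}-\boldsymbol\delta_F^n\|^2_{L^2(\Gamma)}$ is absorbed by the interface numerical dissipation on the left, while the piece $\|\boldsymbol\delta_F^{n+1}-\boldsymbol\delta_F^n\|^2_{L^2(\Gamma)}$ is treated with the trace-inverse inequality~\eqref{traceinverse} applied to the \emph{increment}, giving $\tfrac{C_{TI}k^2\Delta t^2}{h}\|d_t\boldsymbol\delta_F^{n+1}\|^2_{L^2(\Omega_F)}$, which under~\eqref{CFLconv} is absorbed into the numerical dissipation $\tfrac{\rho_F\Delta t^2}{2}\sum_n\|d_t\boldsymbol\delta_F^{n+1}\|^2_{L^2(\Omega_F)}$; both sides of this absorption occur at every time step, so nothing accumulates. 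Your version --- trace-inverse on $\boldsymbol\delta_F^{n+1}$ itself and absorption of $\alpha\Delta t\,\tfrac{C_{TI}k^2}{h}\|\boldsymbol\delta_F^{n+1}\|^2_{L^2(\Omega_F)}\le\rho_F\|\boldsymbol\delta_F^{n+1}\|^2_{L^2(\Omega_F)}$ into the kinetic term --- cannot close: the kinetic energy telescopes to the single quantity $\tfrac{\rho_F}{2}\|\boldsymbol\delta_F^{N}\|^2_{L^2(\Omega_F)}$, whereas your bound contributes a term of size $\rho_F\|\boldsymbol\delta_F^{n+1}\|^2_{L^2(\Omega_F)}$ at each of the $N$ steps. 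Handling those by Gronwall would require per-step coefficients of order $\Delta t$, but yours are of order one, so the Gronwall factor is $e^{CT/\Delta t}$ rather than $e^{CT}$, and the convergence estimate is destroyed. In short, you invoke the correct CFL condition but attach it to the wrong term and the wrong absorption target; the proof as proposed does not go through without the paper's splitting in~\eqref{deltaDiff}.
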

\begin{proof}
Rearranging the error equation~\eqref{erroreq}, using $\boldsymbol \theta_F^{n+1} = \boldsymbol \theta_{\xi}^{n+1}$ on $\Gamma$, and taking the property~\eqref{Ritz} of the Ritz projection operator into account, we obtain
\begin{align}
& \rho_F \int_{\Omega_F} d_t \boldsymbol \delta_F^{n+1} \cdot \boldsymbol \phi_h 
+a_F(\boldsymbol \delta_F^{n+1}, \boldsymbol \phi_h)
-b_F (\delta_P^{n+1}, \boldsymbol \phi_h)
-b_F( \psi_h, \boldsymbol v_h^{n+1})
+ \rho_S \int_{\Omega_S}  d_t \boldsymbol \delta_{\xi}^{n+1}  \cdot \boldsymbol \zeta_h 
\notag \\
& \; 
+a_S( \boldsymbol \delta_{\eta}^{n+1}, \boldsymbol \zeta_h)
+\alpha \int_{\Gamma} (\boldsymbol \delta_{\xi}^{n+1} - \boldsymbol \delta_F^{n}) \cdot \boldsymbol \zeta_h d \boldsymbol x
+\alpha \int_{\Gamma}(\boldsymbol \delta_F^{n+1} -\boldsymbol \delta_{\xi}^{n+1}) \cdot \boldsymbol \phi_h d \boldsymbol x
\notag \\
&
 =  \int_{\Gamma} \boldsymbol \sigma_F (\boldsymbol e_F^{n} ,e_P^n) \boldsymbol n_F \cdot (\boldsymbol \phi_h - \boldsymbol \zeta_h) 
 -\rho_F \int_{\Omega_F} d_t  \boldsymbol \theta_F^{n+1} \cdot \boldsymbol \phi_h 
 -a_F(\boldsymbol \theta_F^{n+1}, \boldsymbol \phi_h)
 \notag 
 \\
 &
  +b_F (\theta_P^{n+1}, \boldsymbol \phi_h)
 -\rho_S \int_{\Omega_S}  d_t \boldsymbol \theta_{\xi}^{n+1}  \cdot \boldsymbol \zeta_h 
-\alpha \int_{\Gamma} (\boldsymbol \theta_{\xi}^{n+1} - \boldsymbol \theta_F^{n}) \cdot \boldsymbol \zeta_h d \boldsymbol x
+\mathcal{R}_1 (\boldsymbol \phi_h, \boldsymbol \zeta_h).
 \label{error_s}
\end{align}
Let $\boldsymbol \phi_h = \Delta t\boldsymbol \delta_F^{n+1}, \boldsymbol \zeta_h =\Delta t \boldsymbol \delta_{\xi}^{n+1}$ and  $\psi_h = \Delta t\delta_P^{n+1}$. Thanks to~\eqref{press_proj}, the pressure terms simplify as follows:
\begin{gather*}
- \Delta t  b_F(\delta_P^{n+1}, \boldsymbol \delta_F^{n+1})
-\Delta t  b_F(\delta_P^{n+1}, \boldsymbol v_h^{n+1})
 = -\Delta t  b_F(\delta_P^{n+1}, S_h \boldsymbol v^{n+1})= 0.
\end{gather*}
Equation~\eqref{error_s} now becomes
\begin{align}
& \frac{\rho_F}{2}\left( \|\boldsymbol \delta_F^{n+1} \|^2_{L^2(\Omega_F)} 
-  \|\boldsymbol \delta_F^{n} \|^2_{L^2(\Omega_F)} 
+  \|\boldsymbol \delta_F^{n+1}-\boldsymbol \delta_F^{n} \|^2_{L^2(\Omega_F)}\right)
+2 \mu_F \Delta t \| {\boldsymbol D}( \boldsymbol \delta_F^{n+1})\|^2_{L^2(\Omega_F)}
\notag \\
& \;
+\frac{\rho_S}{2}\left( \|\boldsymbol \delta_\xi^{n+1} \|^2_{L^2(\Omega_S)} 
-  \|\boldsymbol \delta_\xi^{n} \|^2_{L^2(\Omega_S)} 
+  \|\boldsymbol \delta_\xi^{n+1}-\boldsymbol \delta_\xi^{n} \|^2_{L^2(\Omega_S)}\right)
+\Delta t a_S( \boldsymbol \delta_{\eta}^{n+1}, \boldsymbol \delta_\xi^{n+1})
\notag \\
& \;
+ \frac{\alpha \Delta t }{2} \left(   \| \boldsymbol \delta_{F}^{n+1} \|^2_{L^2(\Gamma)}  - \|  \boldsymbol \delta_{F}^{n} \|^2_{L^2(\Gamma)} 
+ \|  \boldsymbol \delta_{\xi}^{n+1} - \boldsymbol \delta_{F}^{n} \|^2_{L^2(\Gamma)}
 + \| \boldsymbol \delta_{F}^{n+1} - \boldsymbol \delta_{\xi}^{n+1} \|^2_{L^2(\Gamma)}
  \right) 
\notag \\
& 
 = \Delta t \int_{\Gamma} \boldsymbol \sigma_F (\boldsymbol e_F^n, e_P^{n}) \boldsymbol n_F \cdot (\boldsymbol \delta_{F}^{n+1} - \boldsymbol \delta_{\xi}^{n+1} ) 
-\Delta t\rho_F \int_{\Omega_F} d_t \boldsymbol \theta_F^{n+1} \cdot \boldsymbol \delta_{F}^{n+1}
 \notag \\
 & \;
 -  \Delta t a_F( \boldsymbol \theta_F^{n+1}, \boldsymbol \delta_{F}^{n+1})
+\Delta t b_F(\theta_P^{n+1}, \boldsymbol  \delta_{F}^{n+1})
-\Delta t \rho_S \int_{\Omega_S}  d_t \boldsymbol \theta_{\xi}^{n+1}  \cdot \boldsymbol \delta_{\xi}^{n+1}
\notag \\
& \;
-\alpha \Delta t \int_{\Gamma} (\boldsymbol \theta_{\xi}^{n+1} - \boldsymbol \theta_F^{n}) \cdot \boldsymbol \delta_{\xi}^{n+1}
 +\Delta t \mathcal{R}_1 (\boldsymbol \delta_{F}^{n+1}, \boldsymbol \delta_{\xi}^{n+1}).
 \label{error_eng}
\end{align}
For  term $\Delta t a_S( \boldsymbol \delta_{\eta}^{n+1}, \boldsymbol \delta_{\xi}^{n+1})$ we proceed as follows: 
 \begin{gather*}
\Delta t a_S( \boldsymbol \delta_{\eta}^{n+1}, \boldsymbol \delta_{\xi}^{n+1})= \Delta t a_S(\boldsymbol \delta_{\eta}^{n+1}, d_t \boldsymbol \delta_{\eta}^{n+1}+P_h \boldsymbol{\xi}^{n+1}-R_h d_t \boldsymbol \eta^{n+1})
=\frac12 \| \boldsymbol \delta_{\eta}^{n+1}\|^2_{S}
 -\frac12 \| \boldsymbol \delta_{\eta}^{n}\|^2_{S}  
   \notag \\
    +\frac{\Delta t^2}{2} \| d_t \boldsymbol \delta_{\eta}^{n+1}\|^2_S
 + \Delta t a_S(\boldsymbol \delta_{\eta}^{n+1}, P_h \boldsymbol{\xi}^{n+1}-R_h d_t \boldsymbol \eta^{n+1}).
 \end{gather*}  
 Note that $P_h \boldsymbol{\xi}^{n+1}-R_h d_t \boldsymbol \eta^{n+1} = P_h \boldsymbol \xi^{n+1}-\boldsymbol \xi^{n+1}+\boldsymbol \xi^{n+1}-R_h d_t \boldsymbol \eta^{n+1} = -\boldsymbol{\theta}_{\xi}^{n+1}+d_t \boldsymbol{\theta}_{\eta}^{n+1}+\partial_t \boldsymbol \eta^{n+1}-d_t \boldsymbol \eta^{n+1}.$ Hence, using property~\eqref{Ritz} of the Ritz projection operator, Cauchy-Schwartz and Young's inequalities, we have
 \begin{align}
\Delta t a_S( \boldsymbol \delta_{\eta}^{n+1}, P_h \boldsymbol{\xi}^{n+1}-R_h d_t \boldsymbol \eta^{n+1})
 \leq 
 \Delta t \| \boldsymbol{\theta}_{\xi}^{n+1} \|^2_S+\frac{\Delta t }{4} \|\boldsymbol \delta_{\eta}^{n+1}\|^2_S+\Delta t  \mathcal{R}_2(\boldsymbol \delta_{\eta}^{n+1}),
 \end{align}
 where $\mathcal{R}_2(\boldsymbol \delta_{\eta}^{n+1}) = a_S(\boldsymbol\delta_{\eta}^{n+1}, \partial_t \boldsymbol \eta^{n+1}-d_t \boldsymbol \eta^{n+1})$.

 To estimate the first term on the right hand side of~\eqref{error_eng}, similarly as in~\cite{bukavc2016stability}, we note that $\boldsymbol \delta_{F}^{n+1} -\boldsymbol \delta_{\xi}^{n+1}  = -(\boldsymbol v_h^{n+1}-\boldsymbol \xi_h^{n+1})$ on $\Gamma$. Furthermore, adding and subtracting the  continuous velocity and pressure in~\eqref{Sfluid}, the following relation holds on $\Gamma$:
 \begin{align}
&   \boldsymbol \delta_{F}^{n+1}- \boldsymbol \delta_{\xi}^{n+1}  
= \frac{1}{\alpha} \left(  \bsigma_F(\boldsymbol e_F^{n}, e_P^{n}) \boldsymbol n_F  
-  \bsigma_F(\boldsymbol e_F^{n+1}, e_P^{n+1})\boldsymbol n_F
+  \bsigma_F(\boldsymbol v^{n+1} -\boldsymbol v^{n}, p^{n+1} -p^{n})\boldsymbol n_F
 \right). 
 \label{trstres}
\end{align}
Employing  identity~\eqref{trstres}, we have
  \begin{align}
& \Delta t \int_{\Gamma} \boldsymbol \sigma_F (\boldsymbol e_F^{n}, e_P^{n}) \boldsymbol n_F \cdot (\boldsymbol \delta_{F}^{n+1} - \boldsymbol \delta_{\xi}^{n+1} ) 
 \notag \\
 & \quad = \underbrace{  \frac{\Delta t}{\alpha}  \int_{\Gamma}\boldsymbol \sigma_F (\boldsymbol e_F^{n}, e_P^{n}) \boldsymbol n_F \cdot  \left(  \bsigma_F(\boldsymbol e_F^{n}, e_P^{n}) \boldsymbol n_F  
-  \bsigma_F(\boldsymbol e_F^{n+1}, e_P^{n+1})\boldsymbol n_F \right) }_{\mathcal{T}_1} 
 \notag \\
& \quad \;  +\underbrace{ \frac{\Delta t }{\alpha}  \int_{\Gamma} \boldsymbol \sigma_F (\boldsymbol e_F^{n}, e_P^{n}) \boldsymbol n_F \cdot   \bsigma_F(\boldsymbol v^{n+1} -\boldsymbol v^{n}, p^{n+1} -p^{n})\boldsymbol n_F.}_{\mathcal{T}_2}  
 \end{align}
Using the polarized identity, $\mathcal{T}_1$ is given  as 
\begin{align}
\mathcal{T}_1 &= - \frac{\Delta t}{2 \alpha}    \| \boldsymbol \sigma_F(\boldsymbol e_F^{n+1}, e_P^{n+1}) \boldsymbol n_F \|^2_{L^2(\Gamma)}
  +    \frac{\Delta t}{2 \alpha}  \| \boldsymbol \sigma_F(\boldsymbol e_F^{n}, e_P^{n}) \boldsymbol n_F  \|^2_{L^2(\Gamma)}
   \notag \\
  & \;  +\frac{\Delta t}{2 \alpha}  \| \boldsymbol \sigma_F(\boldsymbol e_F^{n+1}, e_P^{n+1}) \boldsymbol n_F -\boldsymbol \sigma_F(\boldsymbol e_F^{n}, e_P^{n}) \boldsymbol n_F  \|^2_{L^2(\Gamma)}.
  \label{t1}
\end{align}
To estimate the last term in~\eqref{t1}, we again use identity~\eqref{trstres} and Young's inequality as follows:
\begin{align}
  &  \frac{\Delta t}{2 \alpha}  \left\| \boldsymbol \sigma_F(\boldsymbol e_F^{n+1}, e_P^{n+1}) \boldsymbol n_F -\boldsymbol \sigma_F(\boldsymbol e_F^{n}, e_P^{n}) \boldsymbol n_F  \right\|^2_{L^2(\Gamma)}
    \notag \\
 & \quad  =
       \frac{\Delta t}{2 \alpha}   \left\|  \bsigma_F(\boldsymbol v^{n+1} -\boldsymbol v^{n}, p^{n+1} -p^{n})\boldsymbol n_F
     -\alpha (\boldsymbol \delta_{F}^{n+1}-\boldsymbol \delta_{\xi}^{n+1})  \right\|^2_{L^2(\Gamma)}
            \notag \\
 & \quad
       =       \frac{\Delta t}{2 \alpha}  \left\| \bsigma_F(\boldsymbol v^{n+1} -\boldsymbol v^{n}, p^{n+1} -p^{n})\boldsymbol n_F  \right\|^2_{L^2(\Gamma)} 
       +   \frac{\alpha \Delta t }{2}  \|  \boldsymbol \delta_{F}^{n+1}-\boldsymbol \delta_{\xi}^{n+1}  \|^2_{L^2(\Gamma)}
    \notag   \\
&        \quad \;      - \Delta t \int_{\Gamma} (\boldsymbol \delta_{F}^{n+1}  -\boldsymbol \delta_{\xi}^{n+1}  ) \cdot \bsigma_F(\boldsymbol v^{n+1} -\boldsymbol v^{n}, p^{n+1} -p^{n})\boldsymbol n_F
            \notag \\
  &  \quad    \leq    
           \frac{\Delta t}{2 \alpha}   \left\| \bsigma_F(\boldsymbol v^{n+1} -\boldsymbol v^{n}, p^{n+1} -p^{n})\boldsymbol n_F  \right\|^2_{L^2(\Gamma)} 
       +   \frac{\alpha \Delta t}{2}  \| \boldsymbol \delta_{F}^{n+1}-\boldsymbol \delta_{\xi}^{n+1} \|^2_{L^2(\Gamma)}
    \notag   \\
&   \quad      
  +\frac{\alpha \Delta t}{12}  \| \boldsymbol \delta_{F}^{n+1}-\boldsymbol \delta_{\xi}^{n+1} \|^2_{L^2(\Gamma)}
                +\frac{3\Delta t}{ \alpha}  \left\| \bsigma_F(\boldsymbol v^{n+1} -\boldsymbol v^{n}, p^{n+1} -p^{n})\boldsymbol n_F \right\|^2_{L^2(\Gamma)}.
\end{align}
Finally, we estimate $\mathcal{T}_2$ using the Cauchy-Schwartz inequality and Young's inequality  as
\begin{align}
\mathcal{T}_2 & 
 \leq
\frac{ \Delta t^2}{2 \alpha }  \left\|\boldsymbol \sigma_F(\boldsymbol e_F^{n}, e_P^{n}) \boldsymbol n_F \right\|^2_{L^2(\Gamma)}
+\frac{  1}{ 2 \alpha } \left\| \boldsymbol \sigma_F\left( \boldsymbol v^{n+1}-\boldsymbol v^{n}, p^{n+1}-p^n \right) 
\boldsymbol n_F \right\|^2_{L^2(\Gamma)}.
\end{align}
We bound the remaining terms in~\eqref{error_eng} as follows. Using Cauchy-Schwartz, Young's, Poincar\'e - Friedrichs, and Korn's inequalities, we have
 \begin{align*}
&-\Delta t\rho_F \int_{\Omega_F} d_t \boldsymbol \theta_F^{n+1} \cdot \boldsymbol \delta_{F}^{n+1}
- \Delta t a_F ( \boldsymbol \theta_F^{n+1}, \boldsymbol \delta_{F}^{n+1})
+\Delta t b_F (\theta_P^{n+1},  \boldsymbol  \delta_{F}^{n+1}) 
-\Delta t \rho_S \int_{\Omega_S}  d_t \boldsymbol \theta_{\xi}^{n+1}  \cdot \boldsymbol \delta_{\xi}^{n+1}
\\
&\quad   \ltt \frac{ \Delta t \rho_F^2}{\mu_F} \| d_t  \boldsymbol \theta_F^{n+1}\|^2_{L^2(\Omega_F)} 
+ \Delta t \mu_F \| \boldsymbol{D} (\boldsymbol \theta_F^{n+1}) \|^2_{L^2(\Omega_F)}+\frac{ \Delta t }{\mu_F}  \|\theta_P^{n+1}\|^2_{L^2(\Omega_F)}
+\frac{\mu_F \Delta t}{4}\|\boldsymbol D (\boldsymbol \delta_F^{n+1}) \|^2_{L^2(\Omega_F)}
\notag
\\
&\qquad 
+\Delta t \rho_S \| d_t \boldsymbol \theta_\xi^{n+1} \|^2_{L^2(\Omega_S)}
+\frac{\Delta t \rho_S }{4} \| \boldsymbol \delta_\xi^{n+1}\|^2_{L^2(\Omega_S)}.
\end{align*}
Next, noting that $\boldsymbol \theta_F^{n+1} = \boldsymbol \theta_{\xi}^{n+1}$ on $\Gamma$ and adding and subtracting $\boldsymbol \delta_F^{n+1}$,  we have
\begin{align}
&-\alpha \Delta t \int_{\Gamma} (\boldsymbol \theta_{\xi}^{n+1} - \boldsymbol \theta_F^{n}) \cdot \boldsymbol \delta_{\xi}^{n+1}
\notag \\
& \quad 
=
-\alpha \Delta t \int_{\Gamma}(\boldsymbol \theta_F^{n+1} -\boldsymbol \theta_{F}^{n}) \cdot \boldsymbol \delta_F^{n+1}
-\alpha \Delta t \int_{\Gamma}(\boldsymbol \theta_F^{n+1} -\boldsymbol \theta_{F}^{n}) \cdot (\boldsymbol \delta_\xi^{n+1}-\boldsymbol \delta_F^{n+1})
\notag \\
&\quad \ltt
\Delta t^3 \left(\frac{\alpha^2 }{\mu_F}+ \alpha \right) \|d_t \boldsymbol \theta_F^{n+1} \|^2_{L^2(\Gamma)}
+\frac{ \mu_F \Delta t }{4} \|\boldsymbol D(\boldsymbol \delta_F^{n+1}) \|^2_{L^2(\Omega_F)}
+\frac{\alpha \Delta t }{12} \|\boldsymbol \delta_F^{n+1}-\boldsymbol \delta_\xi^{n+1} \|^2_{L^2(\Gamma)}.
\notag
\end{align}

Combining the estimates above with equation~\eqref{error_eng}, summing from $n=0, \ldots, N-1$  and taking into account the assumption on the initial data, we have 
\begin{align}
& \frac{\rho_F}{2} \|\boldsymbol \delta_F^{N} \|^2_{L^2(\Omega_F)} 
+\frac{\rho_S}{2} \|\boldsymbol \delta_\xi^{N} \|^2_{L^2(\Omega_S)} 
+\frac12 \| \boldsymbol \delta_{\eta}^{N}\|^2_{S}
+ \frac{\alpha \Delta t}{2}    \| \boldsymbol \delta_{F}^{N} \|^2_{L^2(\Gamma)} 
+  \frac{\Delta t}{2 \alpha}    \| \boldsymbol \sigma_F(\boldsymbol e_F^{N}, e_P^{N}) \boldsymbol n_F \|^2_{L^2(\Gamma)}
\notag \\
& \;
+\frac{3}{2} \mu_F \Delta t \sum_{n=0}^{N-1} \| {\boldsymbol D}( \boldsymbol \delta_F^{n+1})\|^2_{L^2(\Omega_F)}
+  \frac{\rho_F \Delta t^2}{2} \sum_{n=0}^{N-1} \|d_t\boldsymbol \delta_F^{n+1} \|^2_{L^2(\Omega_F)}
+\frac{\rho_S \Delta t^2}{2}  \sum_{n=0}^{N-1} \| d_t \boldsymbol \delta_\xi^{n+1} \|^2_{L^2(\Omega_S)}
\notag \\
& \;
    +\frac{\Delta t^2}{2}  \sum_{n=0}^{N-1} \| d_t \boldsymbol \delta_{\eta}^{n+1}\|^2_S
+ \frac{\alpha \Delta t}{2}  \sum_{n=0}^{N-1}
 \|  \boldsymbol \delta_{\xi}^{n+1} - \boldsymbol \delta_{F}^{n} \|^2_{L^2(\Gamma)}
\notag \\
& 
\ltt
 \Delta t \sum_{n=0}^{N-1} \| \boldsymbol{\theta}_{\xi}^{n+1} \|^2_S
  +\frac{ \Delta t \rho_F^2}{\mu_F} \sum_{n=0}^{N-1} \| d_t  \boldsymbol \theta_F^{n+1}\|^2_{L^2(\Omega_F)} 
+ \Delta t \mu_F \sum_{n=0}^{N-1} \| \boldsymbol{D} (\boldsymbol \theta_F^{n+1}) \|^2_{L^2(\Omega_F)}
+\frac{ \Delta t }{\mu_F}  \sum_{n=0}^{N-1} \|\theta_P^{n+1}\|^2_{L^2(\Omega_F)}
 \notag  \\
 & \;
 +\Delta t \rho_S \sum_{n=0}^{N-1} \| d_t \boldsymbol \theta_\xi^{n+1} \|^2_{L^2(\Omega_S)}
  + \Delta t^3 \left(\frac{\alpha^2 }{\mu_F}+ \alpha  \right) \sum_{n=0}^{N-1}  \|d_t \boldsymbol \theta_F^{n+1} \|^2_{L^2(\Gamma)}
 \notag \\
 &  \;    
 + \frac{\Delta t+1}{\alpha}   \sum_{n=0}^{N-1}  \left\|\boldsymbol \sigma_F\left(\boldsymbol v^{n+1}-\boldsymbol v^{n}, p^{n+1}-p^n \right) \boldsymbol n_F  \right\|^2_{L^2(\Gamma)} 
    \notag   \\
&   \;      
+\frac{\Delta t^2}{2 \alpha }  \sum_{n=0}^{N-1} \left\|\boldsymbol \sigma_F(\boldsymbol e_F^{n}, e_P^{n}) \boldsymbol n_F \right\|^2_{L^2(\Gamma)}
   +\frac{ \alpha \Delta t}{6}  \sum_{n=0}^{N-1} \| \boldsymbol \delta_{F}^{n+1}-\boldsymbol \delta_{\xi}^{n+1} \|^2_{L^2(\Gamma)}
   +\frac{\Delta t \rho_S }{4}  \sum_{n=0}^{N-1} \| \boldsymbol \delta_\xi^{n+1}\|^2_{L^2(\Omega_S)}
\notag \\
&\;
+\frac{\Delta t}{4}  \sum_{n=0}^{N-1} \|\boldsymbol \delta_{\eta}^{n+1}\|^2_S
   +\Delta t \sum_{n=0}^{N-1} \mathcal{R}_1 (\boldsymbol \delta_{F}^{n+1}, \boldsymbol \delta_{\xi}^{n+1})
   +\Delta t   \sum_{n=0}^{N-1}\mathcal{R}_2(\boldsymbol \delta_{\eta}^{n+1}).
 \label{error_eng2}
\end{align}

To estimate the approximation and consistency errors, we use Lemmas~\ref{cons1} and~\ref{lemma_interpolation}, leading to the following inequality:
\begin{align}
& \frac{\rho_F}{2} \|\boldsymbol \delta_F^{N} \|^2_{L^2(\Omega_F)} 
+\frac{\rho_S}{2} \|\boldsymbol \delta_\xi^{N} \|^2_{L^2(\Omega_S)} 
+\frac12 \| \boldsymbol \delta_{\eta}^{N}\|^2_{S}
+ \frac{\alpha \Delta t}{2}    \| \boldsymbol \delta_{F}^{N} \|^2_{L^2(\Gamma)} 
+  \frac{\Delta t}{2 \alpha}    \| \boldsymbol \sigma_F(\boldsymbol e_F^{N}, e_P^{N}) \boldsymbol n_F \|^2_{L^2(\Gamma)}
\notag \\
& \;
+ \mu_F \Delta t \sum_{n=0}^{N-1} \| {\boldsymbol D}( \boldsymbol \delta_F^{n+1})\|^2_{L^2(\Omega_F)}
+  \frac{\rho_F \Delta t^2}{2}  \sum_{n=0}^{N-1} \|d_t\boldsymbol \delta_F^{n+1} \|^2_{L^2(\Omega_F)}
+\frac{\rho_S \Delta t^2}{2}  \sum_{n=0}^{N-1} \| d_t \boldsymbol \delta_\xi^{n+1} \|^2_{L^2(\Omega_S)}
\notag \\
& \;
    +\frac{\Delta t^2}{2}  \sum_{n=0}^{N-1} \| d_t \boldsymbol \delta_{\eta}^{n+1}\|^2_S
+ \frac{\alpha \Delta t}{2}  \sum_{n=0}^{N-1}
 \|  \boldsymbol \delta_{\xi}^{n+1} - \boldsymbol \delta_{F}^{n} \|^2_{L^2(\Gamma)}
\notag \\
& 
 \ltt
h^{2k} \| \boldsymbol {\xi} \|^2_{L^2(0,T: H^{k+1}(\Omega_S))}
  +\frac{ \rho_F^2}{\mu_F} h^{2k}\| \partial_t  \boldsymbol v \|^2_{L^2(0,T: H^{k+1}(\Omega_F))} 
+ \mu_F h^{2k} \| \boldsymbol v \|^2_{L^2(0,T; H^{k+1}(\Omega_F))}
\notag 
\\
& 
\;
+\frac{1 }{\mu_F} h^{2r+2}  \|p\|^2_{L^2(0,T; H^{r+1}(\Omega_F))}
  +\rho_S h^{2k+2}\| \partial_t \boldsymbol \xi \|^2_{L^2(0,T; H^{k+1}(\Omega_S))}
 \notag  \\
 & \;
    + \Delta t^2 \left(\frac{\alpha^2  }{\mu_F}+\alpha   \right) h^{2k+2}  \| \partial_t \boldsymbol v \|^2_{L^2(0,T; H^{k+1}(\Gamma))}
     +\frac{ \Delta t^2 \rho_F^2 }{ \mu_F} \| \partial_{tt} \boldsymbol v \|^2_{L^2(0,T; L^2(\Omega_F))} 
 \notag\\
&  \;  
 + \Delta t^2 \rho_S  \| \partial_{tt} \boldsymbol \xi \|^2_{L^2(0,T; L^2(\Omega_S))}
+\alpha \Delta t^2 \left(\frac{\alpha}{\mu_F}+1 \right) \| \partial_t \boldsymbol v \|^2_{L^2(0,T; L^2(\Gamma))}
 \notag \\
 & \;  
 + \frac{ \Delta t (\Delta t +1)}{\alpha} \| \partial_t \boldsymbol \sigma_F  \boldsymbol n_F  \|^2_{L^2(0,T: L^2(\Gamma))}
+\Delta t^2  \|\partial_{tt} \boldsymbol \eta\|^2_{L^2(0,T; S)}   
+\frac{  \Delta t^2}{2 \alpha } \sum_{n=0}^{N-1} \left\|\boldsymbol \sigma_F(\boldsymbol e_F^{n}, e_P^{n}) \boldsymbol n_F \right\|^2_{L^2(\Gamma)}
    \notag   \\
&   \;      
   +\frac{\alpha \Delta t}{4} 
   \sum_{n=0}^{N-1} \| \boldsymbol \delta_{F}^{n+1}-\boldsymbol \delta_{\xi}^{n+1} \|^2_{L^2(\Gamma)}
+\frac{\Delta t \rho_S}{2}  \sum_{n=0}^{N-1} \| \boldsymbol \delta_\xi^{n+1}\|^2_{L^2(\Omega_S)}
+ \frac{\Delta t}{2}\sum_{n=0}^{N-1} \| \boldsymbol \delta_{\eta}^{n+1} \|^2_S.
 \label{errorN}
\end{align}
We estimate term $\displaystyle\frac{\alpha \Delta t}{4} \sum_{n=0}^{N-1}  \|\boldsymbol \delta_F^{n+1} - \boldsymbol \delta_{\xi}^{n+1}  \|^2_{L^2(\Gamma)}$ by adding and subtracting $\boldsymbol \delta_F^n$ and using trace-inverse inequality~\eqref{traceinverse} as follows: 
\begin{align}
&
\frac{\alpha \Delta t}{4} \sum_{n=0}^{N-1}  \| \boldsymbol \delta_F^{n+1} - \boldsymbol \delta_{\xi}^{n+1}  \|^2_{L^2(\Gamma)} 
=\frac{\alpha \Delta t}{4} \sum_{n=0}^{N-1}  \| \boldsymbol \delta_F^{n+1}- \boldsymbol \delta_F^{n}+ \boldsymbol \delta_F^{n} - \boldsymbol \delta_{\xi}^{n+1}  \|^2_{L^2(\Gamma)} 
\notag \\
& \quad 
\leq 
\frac{\alpha \Delta t}{2} \sum_{n=0}^{N-1}  \| \boldsymbol \delta_F^{n+1} - \boldsymbol \delta_{F}^{n}  \|^2_{L^2(\Gamma)} 
+\frac{\alpha \Delta t}{2} \sum_{n=0}^{N-1}  \| \boldsymbol \delta_{\xi}^{n+1} -\boldsymbol \delta_F^{n}  \|^2_{L^2(\Gamma)} 
\notag \\
& \quad 
\leq 
\frac{\alpha C_{TI} k^2 \Delta t}{2 h} \sum_{n=0}^{N-1}  \| \boldsymbol \delta_F^{n+1} - \boldsymbol \delta_{F}^{n}  \|^2_{L^2(\Omega_F)} 
+\frac{\alpha \Delta t}{2} \sum_{n=0}^{N-1}  \| \boldsymbol \delta_{\xi}^{n+1} -\boldsymbol \delta_F^{n}  \|^2_{L^2(\Gamma)}. 
\label{deltaDiff}
\end{align}
Combining~\eqref{deltaDiff} with~\eqref{errorN}, we get
\begin{align}
& \frac{\rho_F}{2} \|\boldsymbol \delta_F^{N} \|^2_{L^2(\Omega_F)} 
+\frac{\rho_S}{2} \|\boldsymbol \delta_\xi^{N} \|^2_{L^2(\Omega_S)} 
+\frac12 \| \boldsymbol \delta_{\eta}^{N}\|^2_{S}
+ \frac{\alpha \Delta t}{2}    \| \boldsymbol \delta_{F}^{N} \|^2_{L^2(\Gamma)} 
+  \frac{\Delta t}{2 \alpha}    \| \boldsymbol \sigma_F(\boldsymbol e_F^{N}, e_P^{N}) \boldsymbol n_F \|^2_{L^2(\Gamma)}
\notag \\
& \;
+ \mu_F \Delta t \sum_{n=0}^{N-1} \| {\boldsymbol D}( \boldsymbol \delta_F^{n+1})\|^2_{L^2(\Omega_F)}
+ \frac{\Delta t^2}{2} \left( \rho_F  - \frac{\alpha C_{TI} k^2 \Delta t}{h}  \right) \sum_{n=0}^{N-1} \|d_t\boldsymbol \delta_F^{n+1} \|^2_{L^2(\Omega_F)}
\notag \\
& \;
+\frac{\rho_S \Delta t^2}{2}  \sum_{n=0}^{N-1} \| d_t \boldsymbol \delta_\xi^{n+1} \|^2_{L^2(\Omega_S)}
    +\frac{\Delta t^2}{2}  \sum_{n=0}^{N-1} \| d_t \boldsymbol \delta_{\eta}^{n+1}\|^2_S
\notag \\
& 
 \ltt
h^{2k} \| \boldsymbol {\xi} \|^2_{L^2(0,T: H^{k+1}(\Omega_S))}
  +\frac{ \rho_F^2}{\mu_F} h^{2k}\| \partial_t  \boldsymbol v \|^2_{L^2(0,T: H^{k+1}(\Omega_F))} 
+ \mu_F h^{2k} \| \boldsymbol v \|^2_{L^2(0,T; H^{k+1}(\Omega_F))}
\notag 
\\
& 
\;
+\frac{1 }{\mu_F} h^{2r+2}  \|p\|^2_{L^2(0,T; H^{r+1}(\Omega_F))}
  +\rho_S h^{2k+2}\| \partial_t \boldsymbol \xi \|^2_{L^2(0,T; H^{k+1}(\Omega_S))}
 \notag  \\
 & \;
    + \Delta t^2 \left(\frac{\alpha^2  }{\mu_F}+\alpha   \right) h^{2k+2}  \| \partial_t \boldsymbol v \|^2_{L^2(0,T; H^{k+1}(\Gamma))}
     +\frac{ \Delta t^2 \rho_F^2 }{ \mu_F} \| \partial_{tt} \boldsymbol v \|^2_{L^2(0,T; L^2(\Omega_F))} 
 \notag\\
&  \;  
 + \Delta t^2 \rho_S  \| \partial_{tt} \boldsymbol \xi \|^2_{L^2(0,T; L^2(\Omega_S))}
+\alpha \Delta t^2 \left(\frac{\alpha}{\mu_F}+1 \right) \| \partial_t \boldsymbol v \|^2_{L^2(0,T; L^2(\Gamma))}
 \notag \\
 & \;  
 + \frac{ \Delta t (\Delta t +1)}{\alpha} \| \partial_t \boldsymbol \sigma_F  \boldsymbol n_F  \|^2_{L^2(0,T: L^2(\Gamma))}
+\Delta t^2  \|\partial_{tt} \boldsymbol \eta\|^2_{L^2(0,T; S)}   
+\frac{  \Delta t^2}{2 \alpha } \sum_{n=0}^{N-1} \left\|\boldsymbol \sigma_F(\boldsymbol e_F^{n}, e_P^{n}) \boldsymbol n_F \right\|^2_{L^2(\Gamma)}
    \notag   \\
&   \;      
+\frac{\Delta t \rho_S}{2}  \sum_{n=0}^{N-1} \| \boldsymbol \delta_\xi^{n+1}\|^2_{L^2(\Omega_S)}
+ \frac{\Delta t}{2}\sum_{n=0}^{N-1} \| \boldsymbol \delta_{\eta}^{n+1} \|^2_S.
 \label{errorNF}
\end{align}

We recall that the error between the exact and the discrete solution is the sum of the approximation error and the
truncation error. Thus, using the triangle inequality, approximation properties~\eqref{app1}-\eqref{app2} and the Gronwall lemma, we
prove the desired estimate.
 \end{proof}

Using Taylor-Hood elements, i.e. $k = 2, r = 1$, for the fluid problem and piecewise quadratic elements for the solid problem, we have the following 
estimate.
\begin{corollary}
Consider algorithm~\eqref{SSweak}-\eqref{SFweak}. Suppose that $(V^F_h,Q_h^F)$ is given by $\mathbb{P}_2-\mathbb{P}_1$ Taylor-Hood approximation elements and $V_h^S$ is given by $\mathbb{P}_2$ approximation elements. 
Under the assumptions of Theorem~\ref{MainThm}, we have
   \begin{align}
& \frac{\rho_F}{2} \|\boldsymbol e_F^{N}  \|^2_{L^2(\Omega_F)} 
+\frac{\rho_S}{2} \|\boldsymbol e_\xi^{N}   \|^2_{L^2(\Omega_S)} 
+\frac12 \| \boldsymbol e_{\eta}^{N} \|^2_{S}
+ \frac{\alpha \Delta t}{2}    \| \boldsymbol e_F^{N} \|^2_{L^2(\Gamma)} 
+ \mu_F \Delta t \sum_{n=0}^{N-1} \| \boldsymbol{D} ( \boldsymbol e_F^{N} ) \|^2_{L^2(\Omega_F)}
\notag \\
&
  \ltt
e^T \left(
h^4+\Delta t  \right).
\notag 
\end{align}
\end{corollary}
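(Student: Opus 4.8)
The plan is to specialize the general error estimate of Theorem~\ref{MainThm} to the concrete choice $k=2$, $r=1$ and then to collect the resulting powers of $h$ and $\Delta t$ according to their leading order. No new analysis is required; the work is purely a bookkeeping exercise, combined with the observation that $h$ and $\Delta t$ may be assumed bounded, say $h \le 1$ and $\Delta t \le 1$, so that higher powers are dominated by lower ones.

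First I would substitute $k=2$ and $r=1$ into the six factors appearing in Theorem~\ref{MainThm}. This gives $h^{2k+2}=h^{6}$, $h^{2r+2}=h^{4}$, and $h^{2k}=h^{4}$, so the three purely spatial contributions become $h^{6}\mathcal{A}_0 + h^{4}\mathcal{A}_1 + h^{4}\mathcal{A}_2$. Since $h^{6}\le h^{4}$ for $h\le 1$, the first of these is absorbed into the others, leaving a net spatial contribution of order $h^{4}$. For the mixed and temporal terms I would similarly rewrite $\Delta t^{2}h^{2k+2}=\Delta t^{2}h^{6}$ and note that $\Delta t^{2}h^{6}\le \Delta t$ and $\Delta t^{2}\le \Delta t$ whenever $h,\Delta t\le 1$; hence $\Delta t^{2}h^{6}\mathcal{A}_3$, $\Delta t^{2}\mathcal{A}_4$, and $\Delta t\,\mathcal{A}_5$ all reduce to contributions of order $\Delta t$. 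The term $\Delta t\,\mathcal{A}_5$ is the one that is genuinely linear in $\Delta t$ and therefore dictates the half-order temporal rate $\mathcal{O}(\Delta t^{\frac12})$ once the square root of the energy is taken.

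Finally, I would absorb the coefficients $\mathcal{A}_0,\dots,\mathcal{A}_5$ into the generic constant hidden in $\ltt$. The only point that must be checked is that each $\mathcal{A}_i$ is finite and independent of $h$ and $\Delta t$, which follows directly from the regularity assumptions~\eqref{reg1}-\eqref{reg2}: these guarantee that all the Bochner norms of $\boldsymbol v$, $p$, $\boldsymbol \xi$, $\boldsymbol \eta$ and $\partial_t \boldsymbol \sigma_F \boldsymbol n_F$ entering $\mathcal{A}_0,\dots,\mathcal{A}_5$ are bounded. Collecting the surviving orders then yields the claimed bound $\ltt e^{T}(h^{4}+\Delta t)$. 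I do not anticipate any genuine obstacle here; the statement is a direct corollary of Theorem~\ref{MainThm}, and the only mild care needed is the boundedness of $h$ and $\Delta t$ used to discard the higher-order terms.
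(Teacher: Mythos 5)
Your proposal is correct and matches the paper's (implicit) argument exactly: the paper states this corollary without a separate proof, as it follows from Theorem~\ref{MainThm} by the direct substitution $k=2$, $r=1$, absorbing $h^{6}$, $\Delta t^{2}h^{6}$, and $\Delta t^{2}$ into $h^{4}$ and $\Delta t$ for $h,\Delta t\le 1$, and hiding the bounded factors $\mathcal{A}_0,\dots,\mathcal{A}_5$ in the constant of $\ltt$. No gaps.
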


The following lemmas are used in the proof of Theorem~\ref{MainThm}.                                                              
\begin{lemma} \label{cons1}
The following estimate holds:
\begin{align*}
& \Delta t \sum_{n=0}^{N-1} \big(\mathcal{R}_1 (\boldsymbol \delta_{F}^{n+1}, \boldsymbol \delta_{\xi}^{n+1})+\mathcal{R}_2(\boldsymbol \delta_{\eta}^{n+1}) \big)  \\
&
\ltt
\Delta t^2 \left(  \frac{\rho_F^2 }{ \mu_F}\| \partial_{tt} \boldsymbol v \|^2_{L^2(0,T; L^2(\Omega_F))}
+\rho_S \| \partial_{tt} \boldsymbol \xi \|^2_{L^2(0,T; L^2(\Omega_S))}
+\alpha \left(\frac{\alpha}{\mu_F}+1 \right) \| \partial_t \boldsymbol v\|^2_{L^2(0,T; L^2(\Gamma))}
\right.
 \\
 & \quad 
 \left. 
 + \frac{1}{\alpha}  \| \partial_t  \boldsymbol \sigma_F \boldsymbol n_F \|^2_{L^2(0,T; L^2(\Gamma))}
  + \| \partial_{tt} \boldsymbol \eta\|^2_{L^2(0,T; S)} 
\right)
   + \frac{\mu_F \Delta t}{2} \sum_{n=0}^{N-1} \| \boldsymbol D(\boldsymbol \delta_F^{n+1})\|^2_{L^2(\Omega_F)} 
    \\
 & \quad 
  + \frac{ \Delta t \rho_S }{4}\| \boldsymbol \delta_{\xi}^{n+1} \|^2_{L^2(\Omega_S)} 
  +\frac{\alpha \Delta t}{10}  \| \boldsymbol \delta_{F}^{n+1}- \boldsymbol \delta_{\xi}^{n+1} \|^2_{L^2(\Gamma)}
    +\frac{ \Delta t}{4} \sum_{n=0}^{N-1}   \| \boldsymbol \delta_{\eta}^{n+1}\|^2_S.
\end{align*}
\end{lemma}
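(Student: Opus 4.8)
The plan is to estimate each of the five contributions to $\mathcal{R}_1(\boldsymbol \delta_F^{n+1},\boldsymbol \delta_\xi^{n+1})$ together with $\mathcal{R}_2(\boldsymbol \delta_\eta^{n+1})$ separately, in each case splitting by Young's inequality into a \emph{consistency factor}, measuring how far a backward difference quotient or a piecewise-constant-in-time reconstruction is from the exact time derivative, and a discrete-error factor that can be absorbed by the dissipative terms on the left-hand side of~\eqref{error_eng}. The consistency factors are handled by the two standard integral-remainder bounds $\Delta t\sum_{n=0}^{N-1}\|d_t\boldsymbol w^{n+1}-\partial_t\boldsymbol w^{n+1}\|^2_X\ltt\Delta t^2\|\partial_{tt}\boldsymbol w\|^2_{L^2(0,T;X)}$ and $\Delta t\sum_{n=0}^{N-1}\|\boldsymbol w^{n+1}-\boldsymbol w^n\|^2_X\ltt\Delta t^2\|\partial_t\boldsymbol w\|^2_{L^2(0,T;X)}$, obtained by writing each difference as an integral of a derivative over $[t^n,t^{n+1}]$ and applying Cauchy--Schwarz; these produce exactly the $\Delta t^2$ prefactors in the statement.

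First I would treat the two volume consistency terms. For $\rho_F\int_{\Omega_F}(d_t\boldsymbol v^{n+1}-\partial_t\boldsymbol v^{n+1})\cdot\boldsymbol \delta_F^{n+1}$ I apply Cauchy--Schwarz and a Young splitting weighted so that the discrete-error part appears as $\tfrac{\mu_F}{4}\|\boldsymbol D(\boldsymbol \delta_F^{n+1})\|^2_{L^2(\Omega_F)}$ after passing from $\|\boldsymbol \delta_F^{n+1}\|_{L^2(\Omega_F)}$ to $\|\boldsymbol D(\boldsymbol \delta_F^{n+1})\|_{L^2(\Omega_F)}$ via Poincar\'e--Friedrichs and Korn; the complementary factor carries the weight $\rho_F^2/\mu_F$ and yields the $\tfrac{\rho_F^2}{\mu_F}\|\partial_{tt}\boldsymbol v\|^2$ term. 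The solid term $\rho_S\int_{\Omega_S}(d_t\boldsymbol \xi^{n+1}-\partial_t\boldsymbol \xi^{n+1})\cdot\boldsymbol \delta_\xi^{n+1}$ is simpler: plain Young gives an absorbable $\tfrac{\rho_S}{4}\|\boldsymbol \delta_\xi^{n+1}\|^2_{L^2(\Omega_S)}$ (later closed by Gronwall) and the $\rho_S\|\partial_{tt}\boldsymbol \xi\|^2$ term. For $\mathcal{R}_2$, I use Cauchy--Schwarz in the $\|\cdot\|_S$ norm, producing $\tfrac14\|\boldsymbol \delta_\eta^{n+1}\|^2_S$ and $\|\partial_t\boldsymbol \eta^{n+1}-d_t\boldsymbol \eta^{n+1}\|^2_S$, the latter giving $\|\partial_{tt}\boldsymbol \eta\|^2_{L^2(0,T;S)}$. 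The interface stress term $\int_\Gamma\boldsymbol \sigma_F(\boldsymbol v^{n+1}-\boldsymbol v^n,p^{n+1}-p^n)\boldsymbol n_F\cdot(\boldsymbol \delta_F^{n+1}-\boldsymbol \delta_\xi^{n+1})$ pairs directly with the jump $\boldsymbol \delta_F^{n+1}-\boldsymbol \delta_\xi^{n+1}$: Young with weight of order $\alpha$ sends part into the interface jump budget and the rest into $\tfrac1\alpha\|\partial_t\boldsymbol \sigma_F\boldsymbol n_F\|^2$ after the increment estimate $\boldsymbol \sigma_F(\boldsymbol v^{n+1}-\boldsymbol v^n,p^{n+1}-p^n)\boldsymbol n_F=\int_{t^n}^{t^{n+1}}\partial_t(\boldsymbol \sigma_F\boldsymbol n_F)$.

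The step I expect to be the main obstacle is the remaining interface term $\alpha\int_\Gamma(\boldsymbol v^{n+1}-\boldsymbol v^n)\cdot\boldsymbol \delta_\xi^{n+1}$, because its test function is $\boldsymbol \delta_\xi^{n+1}$ whereas the energy furnishes no standalone control of $\|\boldsymbol \delta_\xi\|_{L^2(\Gamma)}$. I would resolve this by writing $\boldsymbol \delta_\xi^{n+1}=(\boldsymbol \delta_\xi^{n+1}-\boldsymbol \delta_F^{n+1})+\boldsymbol \delta_F^{n+1}$: the jump part is absorbed into the interface dissipation $\tfrac{\alpha}{10}\|\boldsymbol \delta_F^{n+1}-\boldsymbol \delta_\xi^{n+1}\|^2_{L^2(\Gamma)}$ and contributes the $\alpha\|\partial_t\boldsymbol v\|^2_{L^2(\Gamma)}$ piece, while for the $\boldsymbol \delta_F^{n+1}$ part I use the \emph{continuous} trace inequality followed by Korn and Poincar\'e to bound $\|\boldsymbol \delta_F^{n+1}\|_{L^2(\Gamma)}\ltt\|\boldsymbol D(\boldsymbol \delta_F^{n+1})\|_{L^2(\Omega_F)}$ (deliberately avoiding the discrete trace--inverse bound~\eqref{traceinverse}, which would introduce an unwanted $h^{-1}$), choosing the Young weight $\alpha^2/\mu_F$ so the absorbable part is $\tfrac{\mu_F}{4}\|\boldsymbol D(\boldsymbol \delta_F^{n+1})\|^2$ and the data part is $\tfrac{\alpha^2}{\mu_F}\|\partial_t\boldsymbol v\|^2_{L^2(\Gamma)}$; this is precisely the origin of the combined coefficient $\alpha\big(\tfrac{\alpha}{\mu_F}+1\big)$ in the statement.

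The delicate bookkeeping throughout is to keep every absorbed constant strictly within the budget furnished by the left-hand side of~\eqref{error_eng}: the two $\tfrac{\mu_F}{4}$ contributions (one from the fluid volume term, one from the $\boldsymbol \delta_F^{n+1}$ part of the last interface term) sum to the $\tfrac{\mu_F}{2}\|\boldsymbol D(\boldsymbol \delta_F^{n+1})\|^2$ recorded in the statement, the jump contributions from the stress term and from the splitting are weighted to sum to at most $\tfrac{\alpha}{10}\|\boldsymbol \delta_F^{n+1}-\boldsymbol \delta_\xi^{n+1}\|^2_{L^2(\Gamma)}$, and the $\tfrac{\rho_S}{4}$ and $\tfrac14$ terms each arise once. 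Summing over $n=0,\dots,N-1$ and collecting the consistency factors through the two integral-remainder bounds then delivers the claimed estimate.
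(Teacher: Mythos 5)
Your proposal is correct and follows essentially the same route as the paper's proof: the same term-by-term Young/Cauchy--Schwarz splittings with the weights $\rho_F^2/\mu_F$, $\rho_S$, and $1/\alpha$, the same key splitting $\boldsymbol \delta_{\xi}^{n+1}=(\boldsymbol \delta_{\xi}^{n+1}-\boldsymbol \delta_F^{n+1})+\boldsymbol \delta_F^{n+1}$ for the term $\alpha\int_\Gamma(\boldsymbol v^{n+1}-\boldsymbol v^n)\cdot\boldsymbol \delta_\xi^{n+1}$ (with trace/Korn/Poincar\'e on the $\boldsymbol \delta_F^{n+1}$ part producing the $\alpha(\alpha/\mu_F+1)$ coefficient), and the same integral-remainder consistency bounds (the paper's Lemma~\ref{consistency}) to generate the $\Delta t^2$ factors.
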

\begin{proof}
Rearranging and using Cauchy-Schwartz, Young's, Poincar\'e - Friedrichs, and Korn's inequalities, we have
\begin{align*}
\Delta t\mathcal{R}_1  (\boldsymbol \delta_{F}^{n+1}, \boldsymbol \delta_{\xi}^{n+1}) 
 &
 =
 \Delta t\rho_F \int_{\Omega_F} (d_t \boldsymbol v^{n+1} - \partial_t \boldsymbol v^{n+1}) \cdot \boldsymbol \delta_{F}^{n+1}  
+\Delta t\rho_S \int_{\Omega_S} (d_t \boldsymbol \xi^{n+1} - \partial_t \boldsymbol \xi^{n+1}) \cdot \boldsymbol \delta_{\xi}^{n+1}
\notag \\
& \; 
+\alpha \Delta t \int_{\Gamma} (\boldsymbol v^{n+1}  - \boldsymbol v^{n}) \cdot \boldsymbol \delta_{F}^{n+1} d \boldsymbol x
+\alpha \Delta t \int_{\Gamma} (\boldsymbol v^{n+1}  - \boldsymbol v^{n}) \cdot (\boldsymbol \delta_{\xi}^{n+1} -\boldsymbol \delta_{F}^{n+1}) d \boldsymbol x
\notag,\\
 & \;  
 +\Delta t\int_{\Gamma} \boldsymbol \sigma_F (\boldsymbol v^{n+1} - \boldsymbol v^n,p^{n+1}-p^n) \boldsymbol n_F \cdot  (\boldsymbol \delta_{F}^{n+1} -\boldsymbol \delta_{\xi}^{n+1}) 
 \notag \\
  & \ltt 
  \frac{\Delta t \rho_F^2 }{ \mu_F} \| d_t \boldsymbol v^{n+1}-\partial_t \boldsymbol v^{n+1}\|^2_{L^2(\Omega_F)} 
  +\frac{\mu_F \Delta t}{2}   \| \boldsymbol D(\boldsymbol \delta_F^{n+1})\|^2_{L^2(\Omega_F)} 
\notag\\
&  \; 
 +  \Delta t \rho_S  \| d_t \boldsymbol \xi^{n+1}-\partial_t \boldsymbol \xi^{n+1}\|^2_{L^2(\Omega_S)} 
  + \frac{ \Delta t \rho_S }{4}\| \boldsymbol \delta_{\xi}^{n+1} \|^2_{L^2(\Omega_S)} 
\notag\\
& \;
+\alpha \Delta t \left(\frac{\alpha}{\mu_F}+1 \right) \| \boldsymbol v^{n+1} - \boldsymbol v^n \|^2_{L^2(\Gamma)}
+\frac{\alpha \Delta t}{12}  \| \boldsymbol \delta_{F}^{n+1}- \boldsymbol \delta_{\xi}^{n+1} \|^2_{L^2(\Gamma)}
 \notag \\
 & \;  
 + \frac{ \Delta t}{\alpha} \|\boldsymbol \sigma_F (\boldsymbol v^{n+1} - \boldsymbol v^n,p^{n+1}-p^n) \boldsymbol n_F  \|^2_{L^2(\Gamma)}.
\end{align*}

Furthermore, using Cauchy-Schwartz and Young's inequalities, we have
\begin{align*}
\Delta t \mathcal{R}_2(\boldsymbol \delta_{\eta}^{n+1}) 
&= \Delta t a_S(\boldsymbol\delta_{\eta}^{n+1}, \partial_t \boldsymbol \eta^{n+1}-d_t \boldsymbol \eta^{n+1}) 
\\
&
\leq \Delta t  \| d_t \boldsymbol \eta^{n+1}-\partial_t \boldsymbol \eta^{n+1}\|^2_S 
+\frac{\Delta t}{4}  \| \boldsymbol \delta_{\eta}^{n+1}\|^2_S.
\end{align*}
The final estimate follows by summing from $n=0$ to $N-1$ and applying Lemma~\ref{consistency}.
\end{proof}

\begin{lemma}[Consistency errors] \label{consistency}
Assume $X \in \{\Omega, \Gamma\}$. The following inequalities hold:
\begin{align*}
&\Delta t \sum_{n=0}^{N-1}\| d_t \boldsymbol \varphi^{n+1}-\partial_t \boldsymbol \varphi^{n+1}\|^2_{L^2(X)} \ltt \Delta t^2 \|\partial_{tt} \boldsymbol \varphi\|^2_{L^2(0,T;L^2(X))}, \\
&\Delta t \displaystyle\sum_{n=0}^{N-1}\| \boldsymbol \varphi^{n+1} -\boldsymbol \varphi^{n}   \|^2_{L^2(X)}\ltt \Delta t^2  \| \partial_t  \boldsymbol \varphi  \|^2_{L^2(0,T; L^2(X))}.
\end{align*}
\end{lemma}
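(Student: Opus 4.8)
The plan is to prove both inequalities by exact integral representations of the time-discretization errors followed by the Cauchy--Schwarz inequality in the time variable; no structural idea is needed here, the content is purely in bookkeeping the remainders, so the constants turn out explicit (one can take $1$ and $1/3$ respectively) and only the $\ltt$ form is recorded. I would handle the two estimates in increasing order of difficulty, beginning with the second.

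For the second inequality, I would write the increment via the fundamental theorem of calculus as $\boldsymbol \varphi^{n+1}-\boldsymbol \varphi^n = \int_{t^n}^{t^{n+1}} \partial_t \boldsymbol \varphi(s)\,ds$, move the $L^2(X)$ norm inside the integral by Minkowski, and apply Cauchy--Schwarz in $s$ on the interval of length $\Delta t$. This yields $\| \boldsymbol \varphi^{n+1}-\boldsymbol \varphi^n\|^2_{L^2(X)} \leq \Delta t \int_{t^n}^{t^{n+1}} \| \partial_t \boldsymbol \varphi(s)\|^2_{L^2(X)}\,ds$. Multiplying by $\Delta t$, summing over $n=0,\dots,N-1$, and recognizing the telescoped time integrals as $\| \partial_t \boldsymbol \varphi\|^2_{L^2(0,T;L^2(X))}$ then gives the claim.

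For the first inequality, the key is an exact representation of the backward-difference error. Taylor's formula with integral remainder, expanded about $t^{n+1}$, gives $\boldsymbol \varphi^n = \boldsymbol \varphi^{n+1}-\Delta t\,\partial_t \boldsymbol \varphi^{n+1}+\int_{t^n}^{t^{n+1}}(s-t^n)\,\partial_{tt}\boldsymbol \varphi(s)\,ds$, so that $d_t \boldsymbol \varphi^{n+1}-\partial_t \boldsymbol \varphi^{n+1} = -\frac{1}{\Delta t}\int_{t^n}^{t^{n+1}}(s-t^n)\,\partial_{tt}\boldsymbol \varphi(s)\,ds$. Taking the $L^2(X)$ norm, using $|s-t^n|\leq \Delta t$ on the interval, and applying Cauchy--Schwarz in $s$ produces $\| d_t \boldsymbol \varphi^{n+1}-\partial_t \boldsymbol \varphi^{n+1}\|^2_{L^2(X)} \ltt \Delta t \int_{t^n}^{t^{n+1}} \| \partial_{tt}\boldsymbol \varphi(s)\|^2_{L^2(X)}\,ds$. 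Multiplying by $\Delta t$ and summing telescopes the integrals to $\| \partial_{tt}\boldsymbol \varphi\|^2_{L^2(0,T;L^2(X))}$, delivering the stated $\mathcal{O}(\Delta t^2)$ bound.

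The only place demanding care --- and the closest thing to an obstacle --- is getting the Taylor remainder right, that is, the sign and the weight $(s-t^n)$: expanding about the wrong node, or dropping the weight, would leave a spurious first-order term and destroy the factor $\Delta t^2$. Once the exact identity for $d_t \boldsymbol \varphi^{n+1}-\partial_t \boldsymbol \varphi^{n+1}$ is in hand, the remainder of the argument is the same Cauchy--Schwarz-and-sum routine used for the increment estimate. The regularity hypotheses $\partial_{tt}\boldsymbol \varphi\in L^2(0,T;L^2(X))$ and $\partial_t \boldsymbol \varphi \in L^2(0,T;L^2(X))$ are exactly what is needed to keep every time integral finite, and these follow from the assumptions~\eqref{reg1}--\eqref{reg2} on the continuous solution when the estimate is applied to the relevant components in Lemma~\ref{cons1}.
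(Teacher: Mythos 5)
Your proof is correct and complete. Note that the paper itself does not prove this lemma at all: its ``proof'' consists of the single line ``See \cite{bukavc2016stability} for proof,'' outsourcing the argument to an earlier reference. What you have written is the standard argument that such a reference contains --- the fundamental theorem of calculus plus Cauchy--Schwarz in time for the increment bound, and Taylor's formula with integral remainder (expanded about $t^{n+1}$, with the weight $(s-t^n)$) plus Cauchy--Schwarz for the backward-difference consistency error --- so in effect you have supplied the self-contained proof the paper omits. Your identity $d_t \boldsymbol\varphi^{n+1}-\partial_t \boldsymbol\varphi^{n+1} = -\frac{1}{\Delta t}\int_{t^n}^{t^{n+1}}(s-t^n)\,\partial_{tt}\boldsymbol\varphi(s)\,ds$ is exact, the pointwise evaluation $\partial_t\boldsymbol\varphi(t^{n+1})$ is legitimate since $\partial_{tt}\boldsymbol\varphi \in L^2(0,T;L^2(X))$ embeds $\partial_t\boldsymbol\varphi$ into $C([0,T];L^2(X))$, and your accounting of the constants (sharp value $1/3$ for the Taylor term if one keeps the weight, $1$ for the increment) is consistent with the $\ltt$ form of the statement.
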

\begin{proof}
See~\cite{bukavc2016stability} for proof.
\end{proof}

\begin{lemma}[Interpolation errors] \label{lemma_interpolation} The following inequalities hold:
\begin{gather*}
\Delta t \sum_{n=0}^{N-1} \| d_t \boldsymbol \theta_F^{n+1}\|^2_{L^2(\Omega_F)} \le  \| \partial_t \boldsymbol \theta_F\|^2_{L^2(0,T;L^2(\Omega_F))} \ltt h^{2k} \|\partial_t \boldsymbol v
\|^2_{L^2(0,T;H^{k+1}(\Omega_F))}, \\
\Delta t \sum_{n=0}^{N-1} \| d_t \boldsymbol \theta_{\xi}^{n+1}\|^2_{L^2(\Omega_S)} \le  \|  \partial_t \boldsymbol \theta_{\xi}\|^2_{L^2(0,T;L^2(\Omega_S))} \ltt  h^{2k+2} \|\partial_t \boldsymbol \xi\|^2_{L^2(0,T;H^{k+1}(\Omega_S))}, \\
\Delta t \sum_{n=0}^{N-1} \|\boldsymbol D(\boldsymbol \theta_F^{n+1})\|^2_{L^2(\Omega_F)} \ltt \Delta t \sum_{n=0}^{N-1} h^{2k} \| \boldsymbol v^{n+1}\|^2_{H^{k+1}(\Omega_F)}  \ltt  h^{2k} \| \boldsymbol v\|^2_{L^2(0,T;H^{k+1}(\Omega_F))}, \\
\Delta t \sum_{n=0}^{N-1} \| \boldsymbol \theta_{\eta}^{n+1} \|^2_S \ltt 
 h^{2k} \|\boldsymbol \eta\|^2_{L^2(0,T;H^{k+1}(\Omega_S))}, \qquad
\Delta t \sum_{n=0}^{N-1} \|\theta_p^{n+1}\|^2_{L^2(\Omega_F)} \ltt h^{2r+2} \| p \|^2_{L^2(0,T; H^{r+1}(\Omega_F))}. 
\end{gather*}
\end{lemma}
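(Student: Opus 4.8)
The plan is to prove each of the five inequalities by the same two-step template: reduce the time-discrete left-hand side to a continuous-in-time projection error, then invoke the appropriate approximation property recorded above. The estimates separate naturally into the two involving a difference quotient $d_t$ (the first two lines) and the three that do not.

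For the two $d_t$-estimates I would first use that $S_h$ and $P_h$ are linear and act only in space, so they commute with $d_t$ and $\partial_t$; hence $d_t\boldsymbol\theta_F^{n+1}=d_t\boldsymbol v^{n+1}-S_h(d_t\boldsymbol v^{n+1})$, $\partial_t\boldsymbol\theta_F=\partial_t\boldsymbol v-S_h\partial_t\boldsymbol v$, and similarly for $\boldsymbol\theta_\xi$. The first inequality on each line, $\Delta t\sum_n\|d_t\boldsymbol\theta^{n+1}\|^2\le\|\partial_t\boldsymbol\theta\|^2_{L^2(0,T;L^2)}$, is then exactly the difference-quotient bound stated earlier in the excerpt, obtained by writing $d_t\boldsymbol\theta^{n+1}=\tfrac1{\Delta t}\int_{t^n}^{t^{n+1}}\partial_t\boldsymbol\theta\,dt$ and applying Cauchy--Schwarz. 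It remains to control $\|\partial_t\boldsymbol\theta_F\|_{L^2(0,T;L^2(\Omega_F))}$ and $\|\partial_t\boldsymbol\theta_\xi\|_{L^2(0,T;L^2(\Omega_S))}$, which follow by integrating in time the pointwise $L^2$ approximation estimates for the Stokes projection (rate $h^k$) and for the Lagrange interpolant (rate $h^{k+1}$) applied to $\partial_t\boldsymbol v$ and $\partial_t\boldsymbol\xi$; this accounts for the orders $h^{2k}$ and $h^{2k+2}$, respectively.

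For the three remaining estimates I would apply the relevant approximation property pointwise in $n$ and sum. Specifically, I bound $\|\boldsymbol D(\boldsymbol\theta_F^{n+1})\|_{L^2(\Omega_F)}$ by~\eqref{app1}, $\|\boldsymbol\theta_\eta^{n+1}\|_S$ by the Ritz estimate~\eqref{app2}, and $\|\theta_P^{n+1}\|_{L^2(\Omega_F)}$ by the pressure-projection bound, obtaining factors $h^{2k}$, $h^{2k}$, and $h^{2r+2}$ after squaring. Multiplying by $\Delta t$, summing over $n$, and identifying the resulting sums with the time-discrete $L^2(0,T;X)$ norms --- which are equivalent to the continuous norms for the piecewise-constant-in-time interpolants, as noted above --- produces the stated right-hand sides.

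The one ingredient that is not a direct quotation of a listed property is the full $L^2(\Omega_F)$ control of the Stokes-projection error $\boldsymbol\theta_F$ (and of $\partial_t\boldsymbol\theta_F$) at rate $h^k$: property~\eqref{app1} bounds only the symmetric-gradient seminorm $\|\boldsymbol D(\boldsymbol\theta_F)\|_{L^2}$, whereas the first line of the lemma requires the $L^2$ norm of the function itself. I would supply this from the standard $L^2$ Stokes-projection estimate available in the cited references~\cite{ciarlet1978finite,bukavc2012fluid} (which in fact yields the sharper $h^{k+1}$, a fortiori the needed $h^k$), or via an Aubin--Nitsche duality argument on the Stokes operator; a direct Poincar\'e--Korn reduction is not available since $\boldsymbol\theta_F$ does not vanish on $\Gamma$, where it equals the interface interpolation error. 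This $L^2$ Stokes-projection bound is the main technical point; the rest is routine bookkeeping with the difference-quotient inequality and the time-norm equivalence.
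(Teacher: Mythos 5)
Your proof is correct and takes essentially the same route as the paper, whose own proof simply states that the last three bounds follow from the approximation properties~\eqref{app1}--\eqref{app2} and defers the two $d_t$-estimates (commuting the spatial projections with the time derivative and invoking the difference-quotient inequality) to~\cite{bukavc2016stability}. Your one substantive addition --- observing that the first estimate requires $L^2(\Omega_F)$ control of the Stokes-projection error, which the strain-seminorm bound~\eqref{app1} does not literally provide, and supplying it from the $L^2$ estimate in the cited references or by an Aubin--Nitsche argument --- correctly fills in the detail the paper leaves implicit.
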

\begin{proof}
The last three inequalities follow directly from approximation properties~\eqref{app1}-\eqref{app2}. For other inequalities, see~\cite{bukavc2016stability} for more details.
\end{proof}

\begin{remark}
The sub-optimal order of convergence in time that is shown in this paper is often obtained in partitioned methods for the interaction between a fluid and thick structure. In particular, sub-optimal accuracy has been shown for the partitioned  method based on Nitsche's approach in~\cite{burman2009stabilization} and for the Robin-Neumann method in~\cite{fernandez2015generalized}. Extending the algorithm to optimal accuracy could be achieved by using higher-order extrapolations in the design of the generalized Robin coupling conditions, but it is out of scope of this paper.
\end{remark}

\section{Numerical examples}~\label{numerics}
To demonstrate the performance of the proposed numerical scheme, we present three numerical examples.
%approach the presented fluid-structure interaction problem in three different aspects. 
In the first example, we investigate the accuracy of the linearized FSI problem~\eqref{Ssolid}-\eqref{Sfluid} considered in Section~\ref{conv} and compare the approximated solution to a manufactured one. We consider the same benchmark problem in the second example, but apply it to a moving domain FSI problem~\eqref{fsi1}-\eqref{fsi2}. In both of these examples, the convergence rates are calculated using different combination parameters, $\alpha$, in order to show the theory is satisfied and in some cases, exceeded. In our final example, we model pressure propagation in a two-dimensional channel with physiologically realistic parameters for blood flow  and show the comparison of the results obtained using the proposed partitioned scheme and a monolithic method.

\subsection{Example 1}

In the first numerical example, we use the method of manufactured solutions to verify the theoretical convergence results from Section~\ref{conv}. 
We define the  structure and fluid domains as upper and lower parts of the unit square, respectively, i.e. ${\Omega}_S=(0,1) \times (\frac{1}{2},1)$ and ${\Omega}_F=(0,1) \times (0, \frac{1}{2})$. 
The true solutions for the structure displacement, $\boldsymbol \eta$, the fluid velocity, $\boldsymbol v$, and the fluid pressure, $p$, are defined as:
\begin{align}
 &  \begin{bmatrix}
	\eta_x \\ \eta_y
   \end{bmatrix}
   = 
    \begin{bmatrix}
	10^{-3}2x(1-x)y(1-y)e^t \\ 10^{-3}x(1-x)y(1-y) e^t \label{true_eta}
   \end{bmatrix},
   \\
   &\begin{bmatrix}
   	v_x \\ v_y
   \end{bmatrix}
   =
   \begin{bmatrix}
   	10^{-3}2x(1-x)y(1-y)e^t \\ 10^{-3}x(1-x)y(1-y)e^t \label{true_u}
   \end{bmatrix},
   \\
   &p  =-10^{-3} e^t \lambda_S \left(2(1-2x)y(1-y) + x(1-x)(1-2y)\right). \label{true_p}
\end{align}
We note that the fluid velocity is not divergence-free. Therefore, we add a forcing term to the conservation of mass equation. We also add forcing terms in both the fluid and structure equations~\eqref{Ssolid}-\eqref{Sfluid}, resulting in the following system:
\begin{align*}
& \rho_F \partial_t \boldsymbol{v}  = \nabla \cdot \boldsymbol\sigma_F(\boldsymbol v, p) + \boldsymbol{f}_F&  \textrm{in}\; \Omega_F \times(0,T), 
%\label{fluid} 
\\ 
&\nabla \cdot \boldsymbol{v} = s & \textrm{in}\; \Omega_F \times(0,T),
% \label{compressibility} 
\\ 
& \partial_{t} {\boldsymbol \eta}  =  \boldsymbol \xi &  \textrm{in}\; {\Omega}_S\times(0,T), 
%\label{structure_derivative}
\\
&{\rho}_S \partial_{t} {\boldsymbol \xi}  =  {\nabla} \cdot \boldsymbol \sigma_S(\boldsymbol \eta) + \boldsymbol{f}_S&  \textrm{in}\; {\Omega}_S\times(0,T),  
%\label{structure}
\\
& \boldsymbol v= \boldsymbol 0 & \textrm{on} \; \partial \Omega_F / {\Gamma} \times (0,T),
\\
& \boldsymbol \eta = \boldsymbol 0 & \textrm{on} \; \partial \Omega_S / {\Gamma} \times (0,T).
\end{align*}
Using the exact solutions, we compute  forcing terms $\boldsymbol f_F, \boldsymbol f_S$ and $s$.

Implementing our methodology using finite elements was facilitated through the use of the FreeFem++  software~\cite{hecht2012new}.  For space discretization, $\mathbb{P}_1$ elements were used for both the structure velocity and displacement, where $\mathbb{P}_1$ bubble - $\mathbb{P}_1$ elements were used for the fluid velocity and pressure, respectively.  We  set  parameters $\lambda_S, \rho_S,  \mu_S, \rho_F \text{ and } \mu_F$ equal to one. The simulations were performed until the final time $T= 0.3$ s was reached. Figure~\ref{comp_actual} shows the comparison of the computed and exact fluid velocity (top)  and structure displacement (bottom) obtained with $\alpha=10$. An excellent agreement is observed.
\begin{figure}[ht]
\centering{
\includegraphics[scale=0.15]{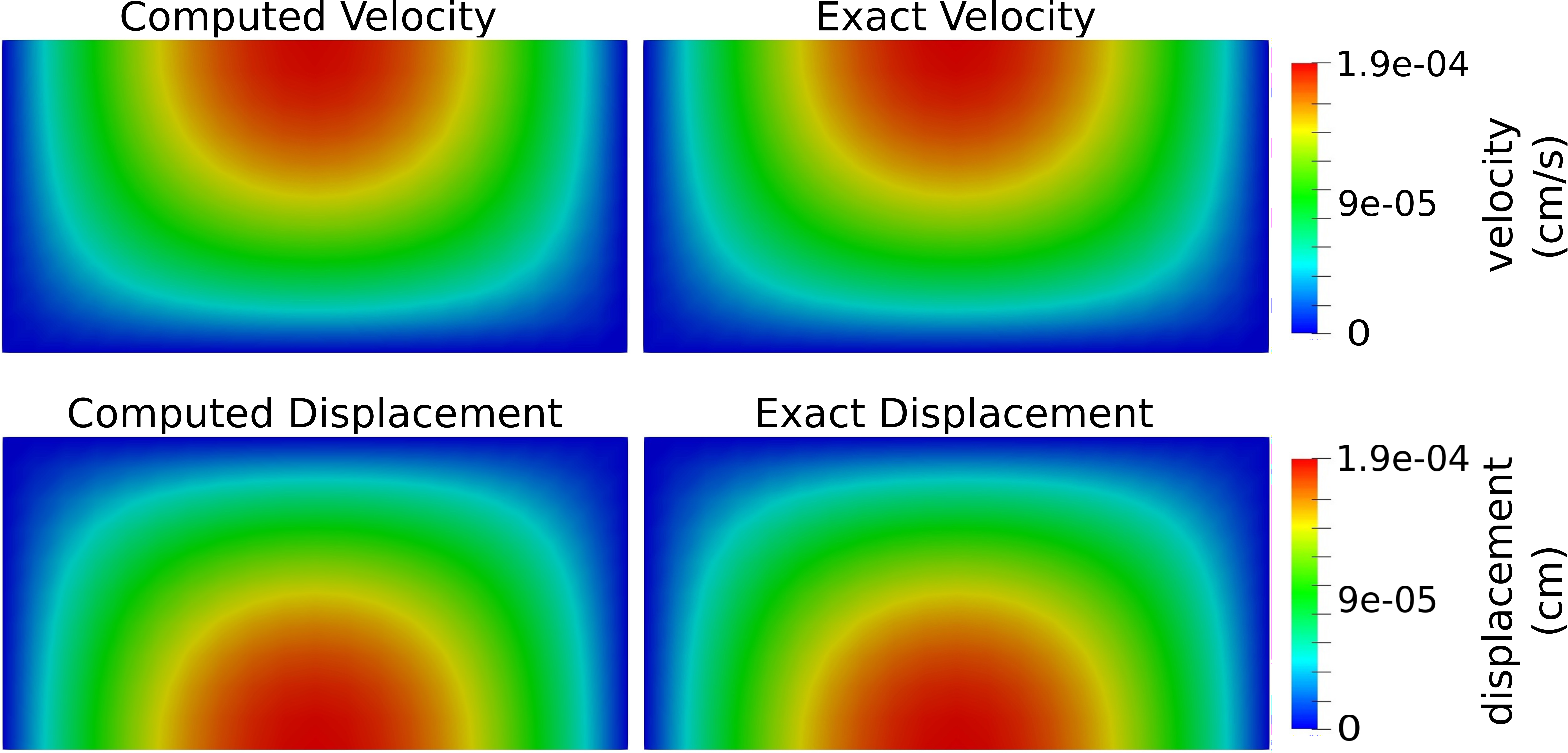}
}
\caption{Example 1: A comparison of the computed and exact fluid velocity (top) and structure displacement (bottom) at $T=0.3$ s.}
\label{comp_actual}
\end{figure}

In conjunction with comparing the numerical results to the actual solution, we compute  convergence rates as described in Theorem~\ref{MainThm} in addition to analyzing how well the coupling conditions are satisfied at the interface. In particular, we compute the following errors for the structure displacement and velocity, and fluid velocity:
\begin{align*}
e_{\boldsymbol \eta} = \frac{ \left\Vert \boldsymbol \eta -\boldsymbol \eta_{ref} \right\Vert^2_{S}}{\left\Vert \boldsymbol \eta_{ref} \right\Vert^2_S},
\quad
e_{\boldsymbol \xi}=\frac{\left\Vert \boldsymbol \xi - \boldsymbol \xi_{ref} \right\Vert_{L^2({\Omega}_S)}}{\left\Vert \boldsymbol \xi_{ref} \right\Vert_{L^2({\Omega}_S)}}, 
\quad
e_F=\frac{\left\Vert \boldsymbol v- \boldsymbol v_{ref} \right\Vert_{L^2(\Omega_F)}}{\left\Vert \boldsymbol v_{ref} \right\Vert_{L^2(\Omega_F)}},
\end{align*}
as well as  the error for the kinematic coupling condition:
\begin{align*}
e_{ke}=\frac{\left \Vert \boldsymbol v - \boldsymbol \xi \right \Vert_{\Gamma}}{\left \Vert\boldsymbol v \right\Vert_{\Gamma}},
\end{align*}
and error for the dynamic coupling condition:
\begin{align*}
e_{\boldsymbol \sigma}=\frac{\left \Vert \boldsymbol \sigma_F \boldsymbol n_F - \boldsymbol \sigma_S \boldsymbol n_F \right \Vert_{\Gamma}}{\left\Vert \boldsymbol \sigma_F \boldsymbol n_F \right \Vert_{\Gamma}}.
\end{align*}
In order to compute the convergence rates, we start  with an initial time step  $\Delta t = 0.01$ and mesh size $h=0.1$, and divide them by two for four iterations. Each variable is then evaluated with differing alphas equaling 1, 10, 100, 200, and 500.

\begin{figure}[ht]
\centering{
\includegraphics[scale=0.55]{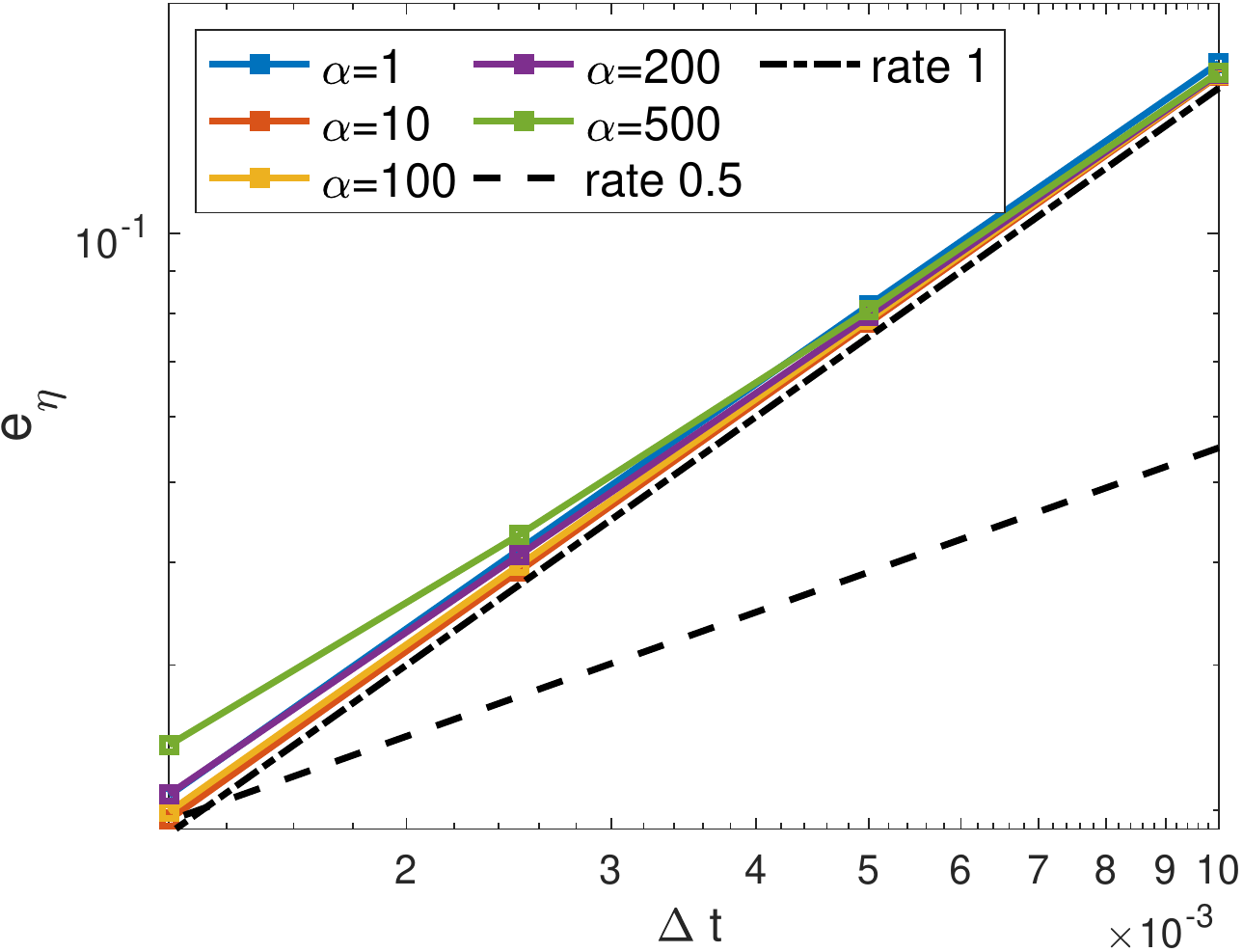}
\includegraphics[scale=0.55]{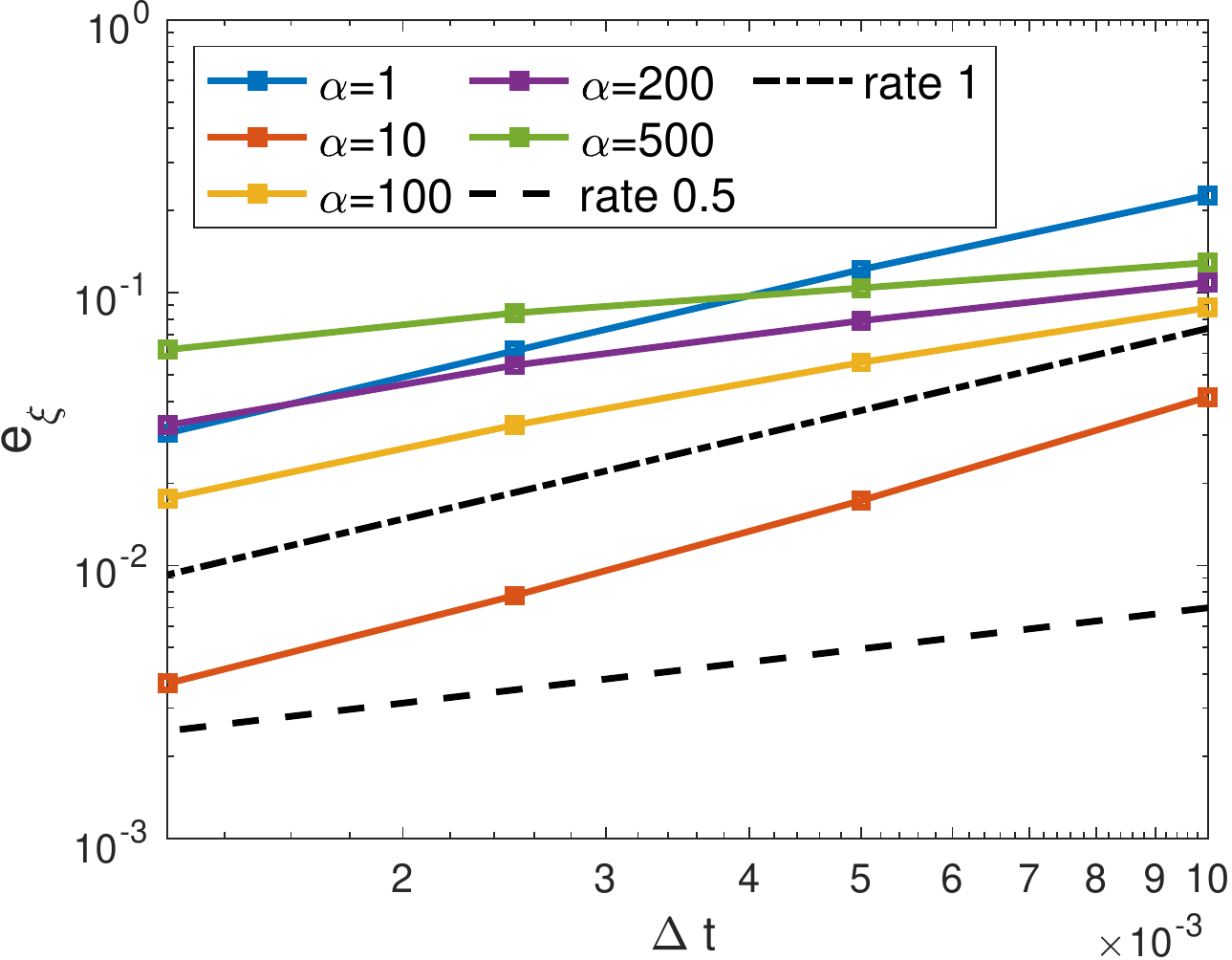}
\includegraphics[scale=0.55]{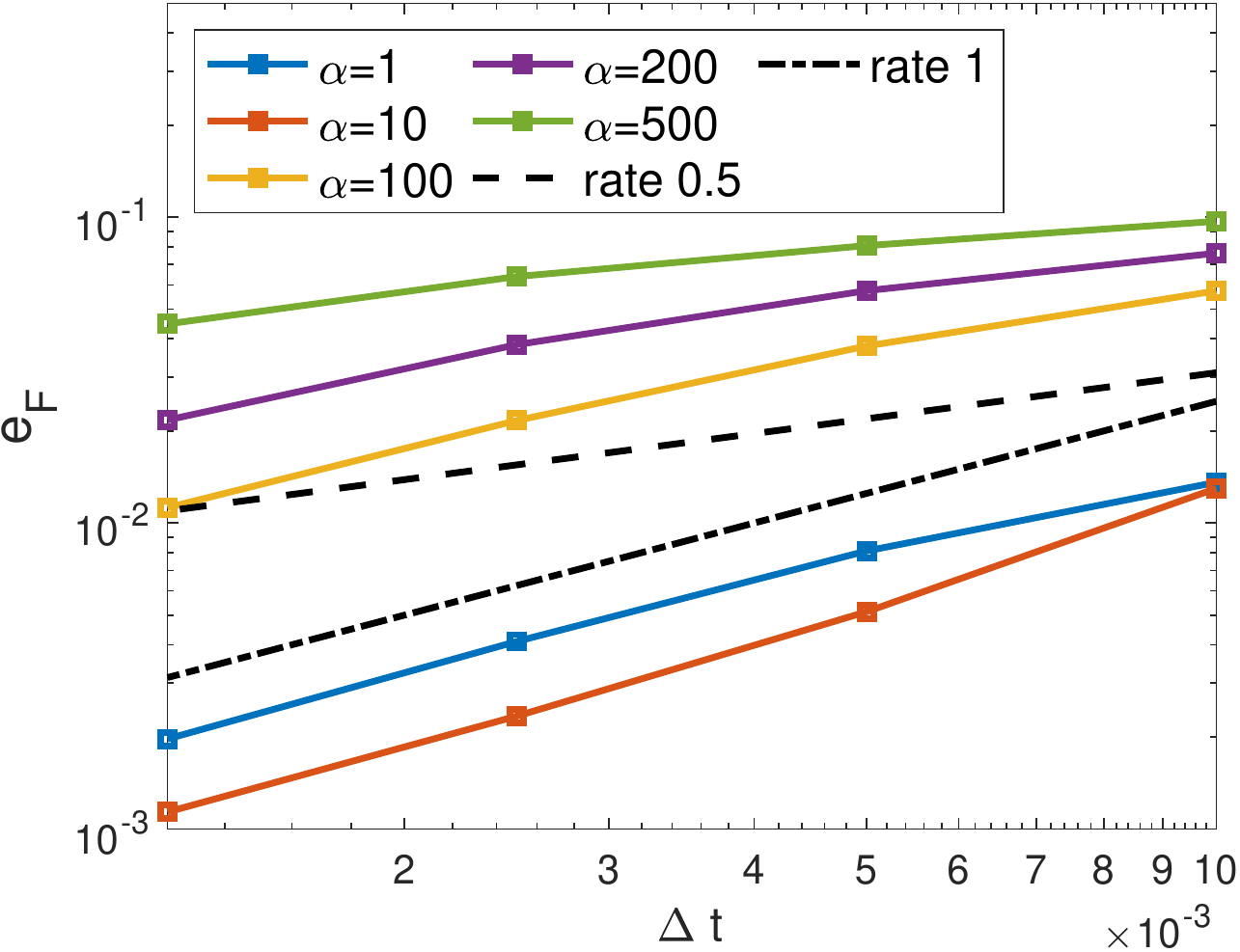}
}
\caption{Example 1: Errors  for the solid displacement $\boldsymbol \eta$ (top-left), solid velocity $\boldsymbol \xi$ (top-right), and fluid velocity $\boldsymbol v$ (bottom) at the final time $T=0.3$ s. }
\label{eta_xi_v}
\end{figure}
Figure~\ref{eta_xi_v} shows the convergence rates for the structure displacement   (top left), structure velocity   (top right) and fluid velocity   (bottom) computed at the final time.
We observe that the convergence rates for the structure displacement are  close to one across all values of $\alpha$. The convergence rates for the structure velocity are first-order, or better, when $\alpha$ is equal to 1 and 10. As $\alpha$ increases, the convergence rates begin to decrease, compromising condition~\eqref{CFLconv} used in the convergence analysis. Similar holds for the fluid velocity, which has the best convergence rates for $\alpha$ values of 10 and 100, and the worst when $\alpha$ increases to 500.

\begin{figure}[h!]
    \centering{
        \includegraphics[scale=0.55]{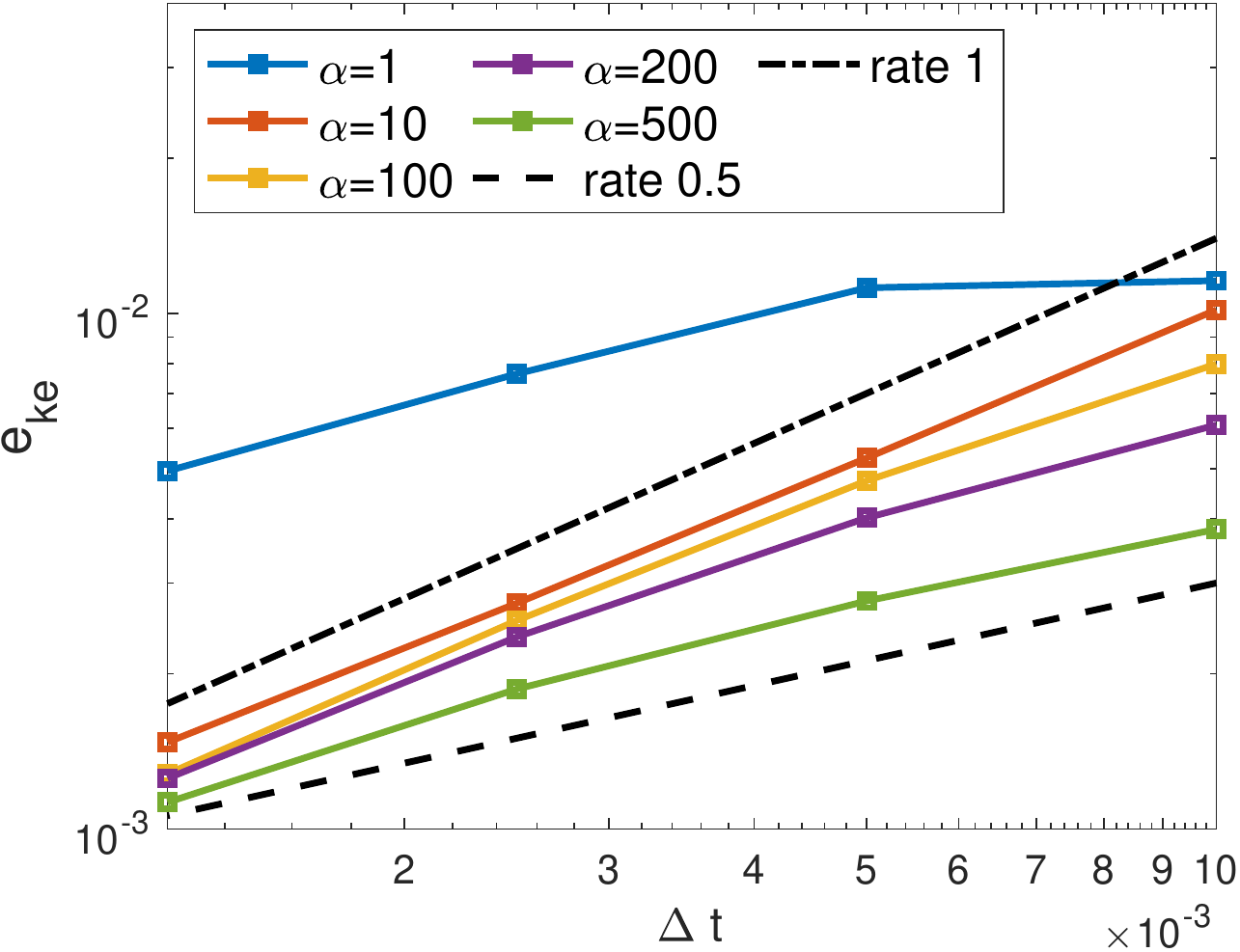}
        \includegraphics[scale=0.55]{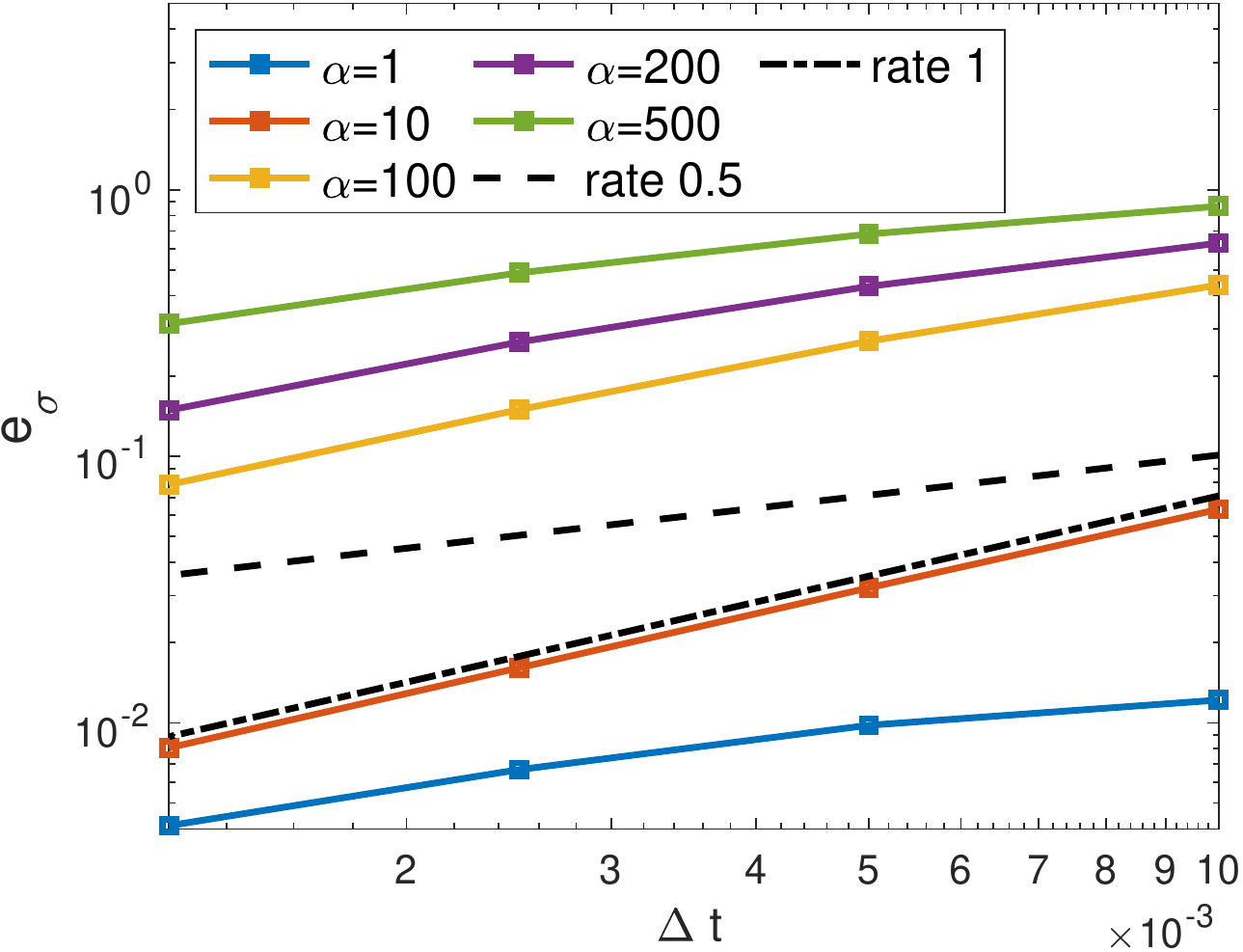}
        }
        \caption{Example 1: Kinematic (left) and dynamic (right) coupling condition errors at the final time $T=0.3$ s.}
        \label{kinematic_dynamic}
\end{figure}
In addition to the errors related to Theorem~\ref{MainThm}, we  investigate the relation between the combination parameter $\alpha$ and how well the coupling conditions are satisfied. In particular, the generalized Robin boundary condition~\eqref{lcomb} will turn into the dynamic coupling condition~\eqref{dynamic} as $\alpha \rightarrow 0$, and it will approach the kinematic coupling condition~\eqref{kinematic} as $\alpha \rightarrow \infty.$ Therefore, we compute errors $e_{ke}$ and $e_{\boldsymbol \sigma}$ as we take $\alpha=1, 10, 100, 200,$ and $500$.   In this case, to better approximate the fluid and structure stresses, we used  $\mathbb{P}_2$ elements  for fluid and structure velocities and the structure displacement, and $\mathbb{P}_1$ elements  for pressure. Figure~\ref{kinematic_dynamic} shows errors $e_{ke}$ (left) and $e_{\boldsymbol \sigma}$ (right) computed with the following time and mesh sizes:
\begin{gather}
(\Delta t, h) \in \left\{ \left( \frac{10^{-2}}{2^k}, \frac{0.0625}{2^k} \right)\right\}_{k=0}^3.
\label{discPar}
\end{gather}
We observe that, with the exception of $\alpha=1$, the convergence rates are closer to one for smaller values of $\alpha$, and they decrease to 0.5 as $\alpha$ increases to 500. We also note that the error in the kinematic coupling condition decreases as $\alpha$ increases, while the opposite holds for the dynamic coupling condition. However, for all the considered cases, the relative error in the kinematic coupling condition is significantly smaller than the relative  error in the dynamic coupling condition.

\subsection{Example 2}
In the second example, we study the accuracy of the proposed method applied to a moving domain FSI problem~\eqref{fsi1}-\eqref{fsi2}. 
We use the same manufactured solutions,~\eqref{true_eta}-\eqref{true_p}, as in Example~1. Furthermore, we  define the true solution for the fluid domain displacement to be $\boldsymbol \eta_F = \boldsymbol \eta$, and the true solution for the fluid domain velocity to be $\boldsymbol w  =\partial_t \boldsymbol \eta_F$. Similar to Example~1, we add  forcing terms to equations~\eqref{fsi1},~\eqref{fsi11} and~\eqref{fsi12}. 
To update the fluid domain, we solve 
\begin{align*}
&- \Delta {\boldsymbol \eta}^{n+1}_F = \boldsymbol f_D & \textrm{in} \; \hat{\Omega}_F,  \\
&  {\boldsymbol \eta}^{n+1}_F = 0 & \textrm{on} \; \hat{\Gamma}^{in}_F \cup \hat{\Gamma}^{out}_F, \\
&  {\boldsymbol \eta}^{n+1}_F = {\boldsymbol \eta}^{n+1} & \textrm{on} \; \hat{\Gamma}.
\end{align*}
As for $\boldsymbol f_F, \boldsymbol f_S$ and $s$, we compute $\boldsymbol f_D$ using the exact solution.
Every other aspect of this example remains unchanged, meaning the error calculations, space and time discretization specifications, and parameters are the same as in Example~1.

\begin{figure}[ht]
\centering{
\includegraphics[scale=0.55]{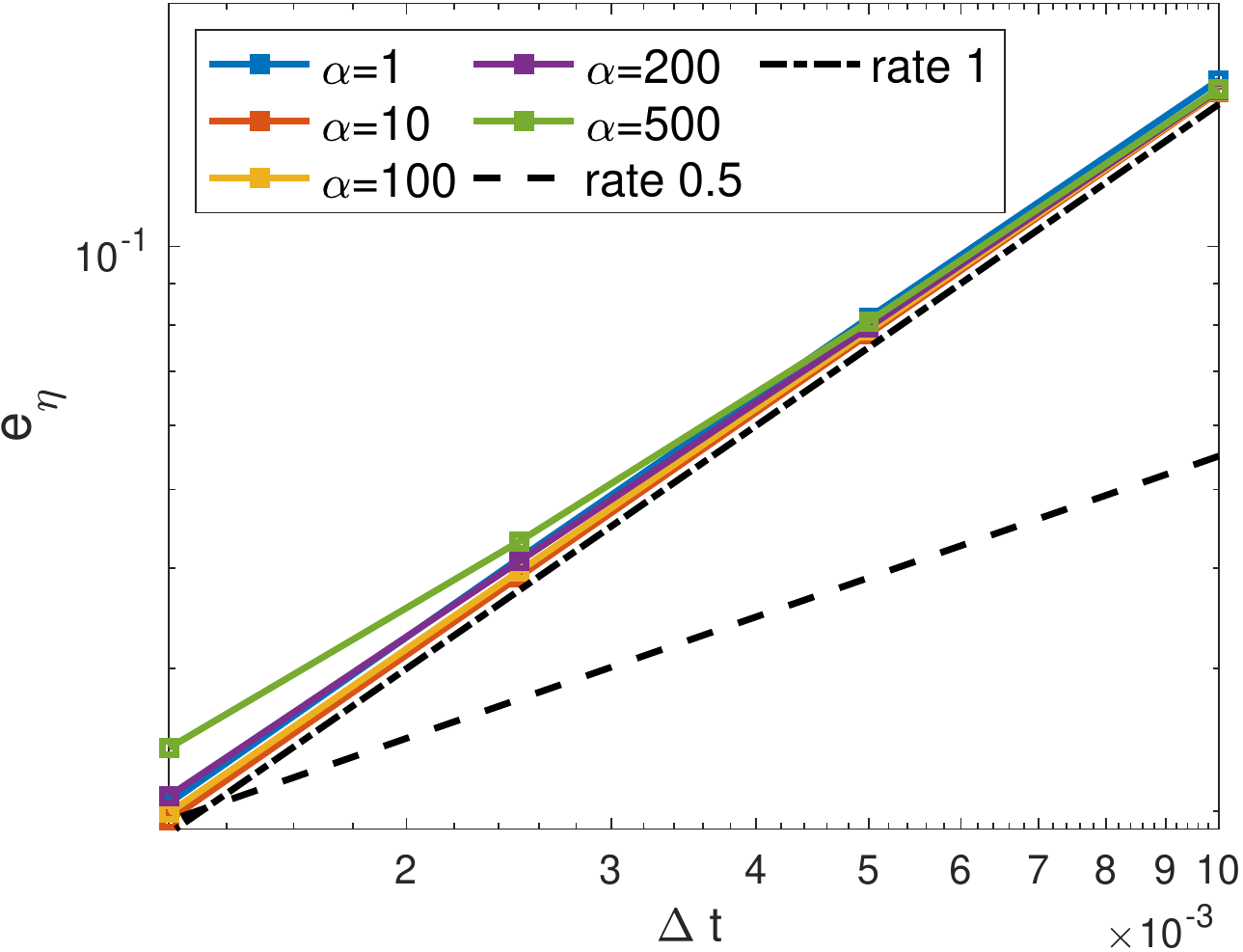}
\includegraphics[scale=0.55]{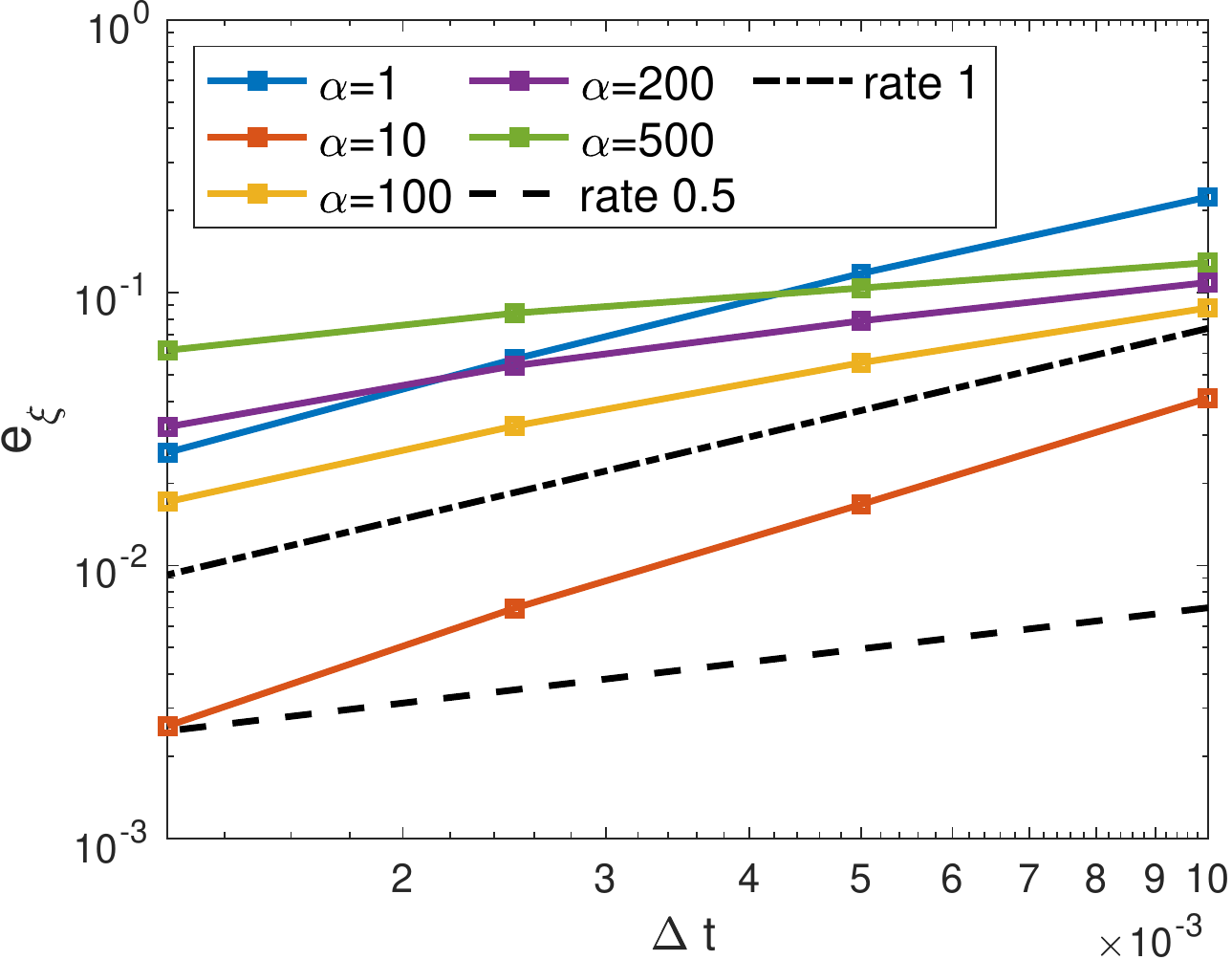}
\includegraphics[scale=0.55]{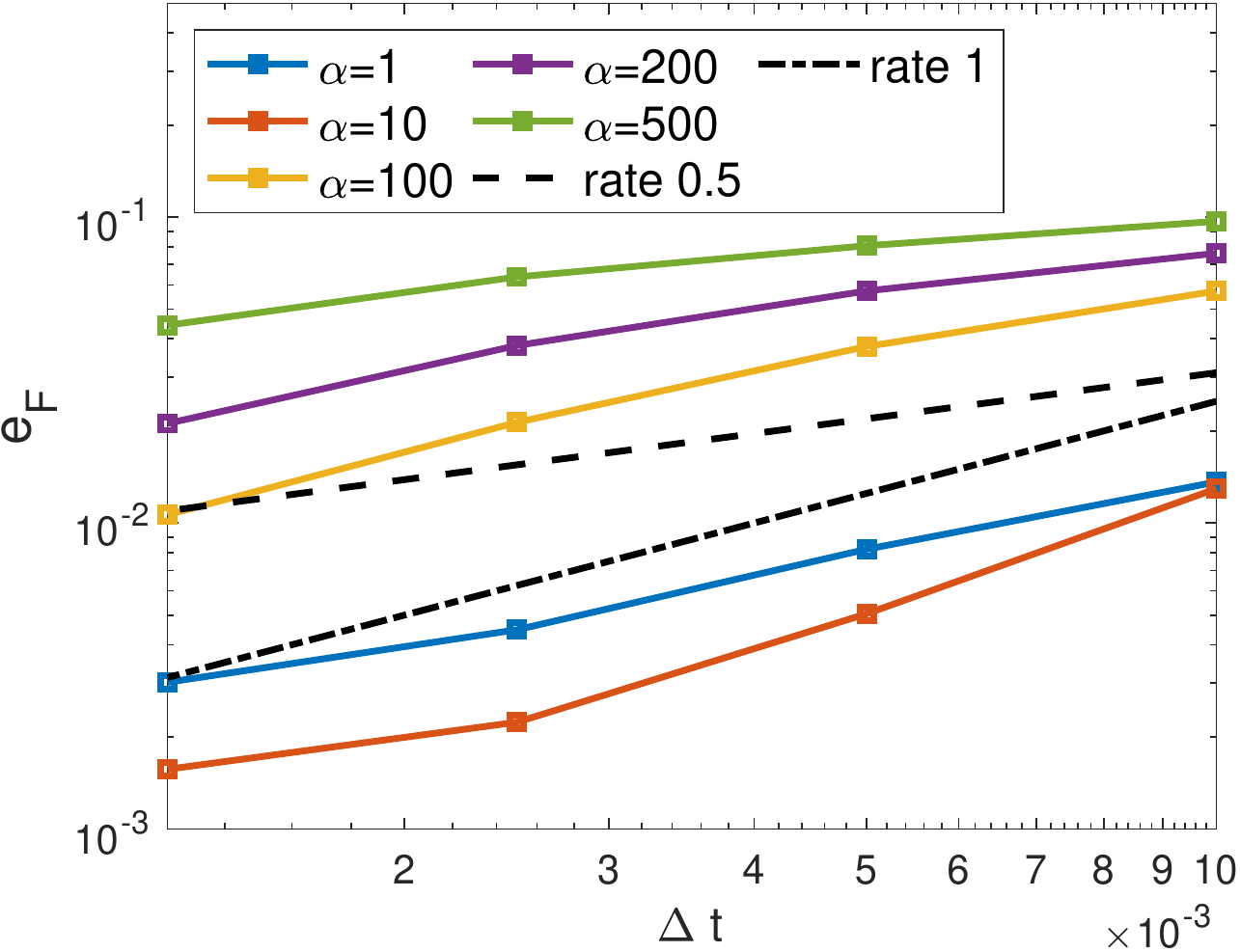}
}
\caption{Example 2: Errors  for the solid displacement $\boldsymbol \eta$ (top-left), solid velocity $\boldsymbol \xi$ (top-right), and fluid velocity $\boldsymbol v$ (bottom) at the final time $T=0.3$ s.}
\label{eta_xi_v2}
\end{figure}
Figure~\ref{eta_xi_v2} shows the errors for the structure displacement (top left), structure velocity (top right) and fluid velocity (bottom) obtained at $T=0.3$ s. Similar behavior is observed as in Example 1. For all values of $\alpha$, the convergence rates for the solid displacement are close to one, while the errors are roughly the same with the very slight exception of when $\alpha=500$. The convergence rates for solid velocity decrease from one to 0.5 as the values of $\alpha$ increase, while the errors themselves grow as $\alpha$ increases with the exception of $\alpha = 1$.  In a similar trend, the rates for the fluid velocity decrease and the errors increase
 as $\alpha$ grows, with the exception of $\alpha=1$. For all variables, the best convergence rates and the smallest errors are obtained with $\alpha=10$.

\begin{figure}[h!]
    \centering{
        \includegraphics[scale=0.55]{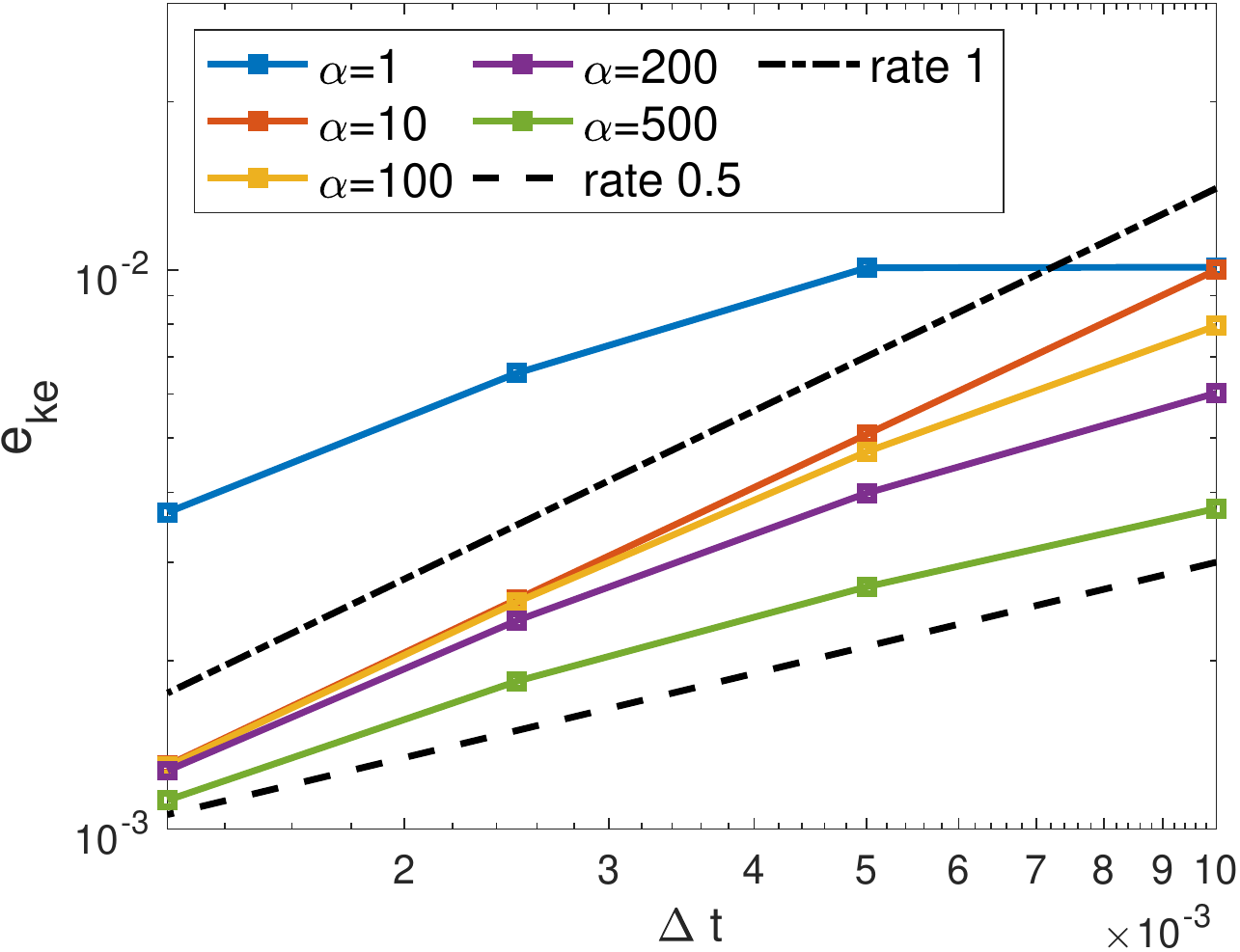}
        \includegraphics[scale=0.55]{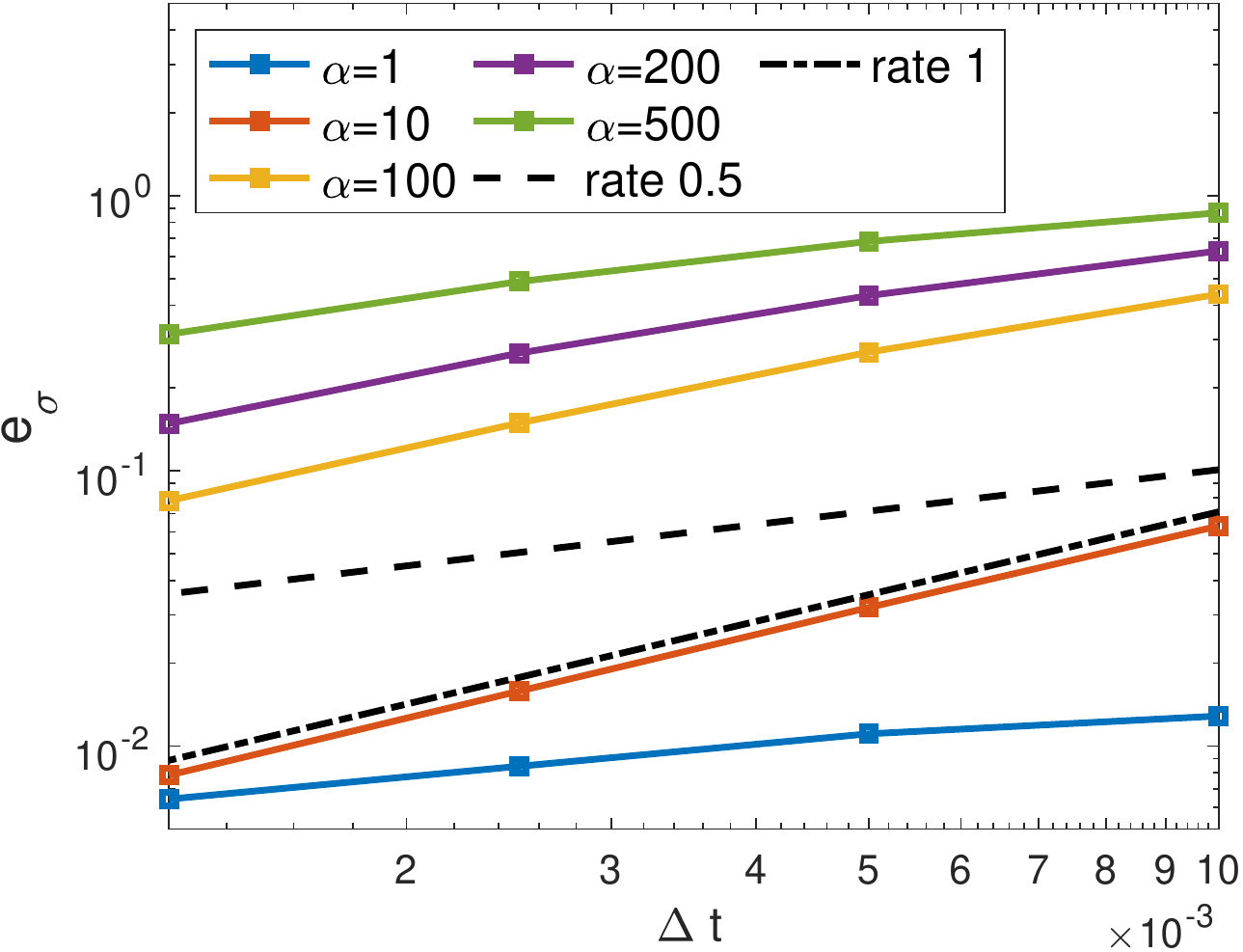}
        }
        \caption{Example 2: Kinematic (left) and dynamic (right) coupling condition errors at the final time $T=0.3$ s.}
        \label{kinematic_dynamic2}
\end{figure}
Likewise to Example~1, we calculate the errors in approximating coupling conditions using a $\mathbb{P}_1$ space discretization for  pressure and $\mathbb{P}_2$ for all other variables. The temporal and spatial discretization parameters are the same as described in~\eqref{discPar}. Figure~\ref{kinematic_dynamic2} shows the kinematic coupling condition error (left) and the dynamic coupling condition error (right) at $T=0.3$ s obtained using different values of $\alpha$.
Similar to what we observed in Example~1, as $\alpha$ increases, the error decreases for the kinematic coupling condition with the reversed result for the dynamic coupling condition. As for convergence rates, we obtain  values around 0.5 using $\alpha=1$ and values very close to one using $\alpha=10$, which then decrease back down to 0.5 as $\alpha$ increases.

\subsection{Example 3}

The third example focuses on a classical benchmark problem used in the validation of FSI solvers~\cite{bukavc2014modular}. We consider the fluid flow in a two-dimensional channel interacting with a deformable wall.  The  reference fluid and structure domains are defined as $\hat{\Omega}_F = (0,6) \times (0,0.5)$ and $\hat\Omega_S= (0,6) \times (0.5,0.6)$, respectively. We consider the moving domain FSI problem~\eqref{fsi1}-\eqref{fsi2},  where we add a linearly elastic spring term, $\gamma \boldsymbol \eta$, to the elastodynamic equation, yielding:
\begin{align*}
\rho_S \partial_t \boldsymbol \xi + \gamma \boldsymbol \eta = \nabla \cdot \boldsymbol \sigma_S(\boldsymbol \eta)
\qquad \textrm{in} \; \hat{\Omega}_S \times (0,T).
\end{align*}
Term $\gamma \boldsymbol \eta$ is obtained from
the axially symmetric model, and it represents a spring keeping the top and
bottom boundaries in a two-dimensional model connected~\cite{bukavc2014modular}.

The parameters used in this example, $\rho_F$ = 1 g/cm$^3, \mu_F$ = 0.035 g/cm s$, \rho_S=1.1$ g/cm$^3, \mu_S=5.75 \cdot 10^5$ dyne/cm$^2$, $ \gamma=4 \cdot 10^6$  dyne/cm$^4$  and $\lambda_S=1.7 \cdot 10^6$  dyne/cm$^2$, are within physiologically realistic values of blood flow in compliant arteries.  In this example, we use $\alpha=100$.  
The flow is driven by prescribing a time-dependent pressure drop at the inlet and outlet sections, as defined in~\eqref{inlet}-\eqref{outlet}, where
\begin{align}
    p_{in}(t)=\left\{
                \begin{array}{ll}
                  \frac{p_{max}}{2} \left[1-\cos \left(\frac{2 \pi t}{t_{max}}\right) \right],    & \text{if } t \leq t_{max}\\
                  0,    &\text{if }t > t_{max}
                \end{array}
              \right.
             ,\text{        }p_{out}=0 \text{ } &\forall t \in (0,T).
\end{align}
The pressure pulse is in effect for $t_{max} = 0.03$ s with maximum pressure $p_{max}=1.333 \times 10^4$ dyne/$\text{cm}^2$.  The final time is $T=12$ ms.
We use $\mathbb{P}_1$ bubble - $\mathbb{P}_1$ elements   for the fluid velocity and pressure, respectively, and  $\mathbb{P}_1$ elements for  the structure velocity and displacement. The results are obtained using $\Delta t=10^{-5}$ on a mesh containing 7,500 elements in the fluid domain and 1,200 elements in the structure domain. 

\begin{figure}[h!]
    \centering{
        \includegraphics[scale=0.55]{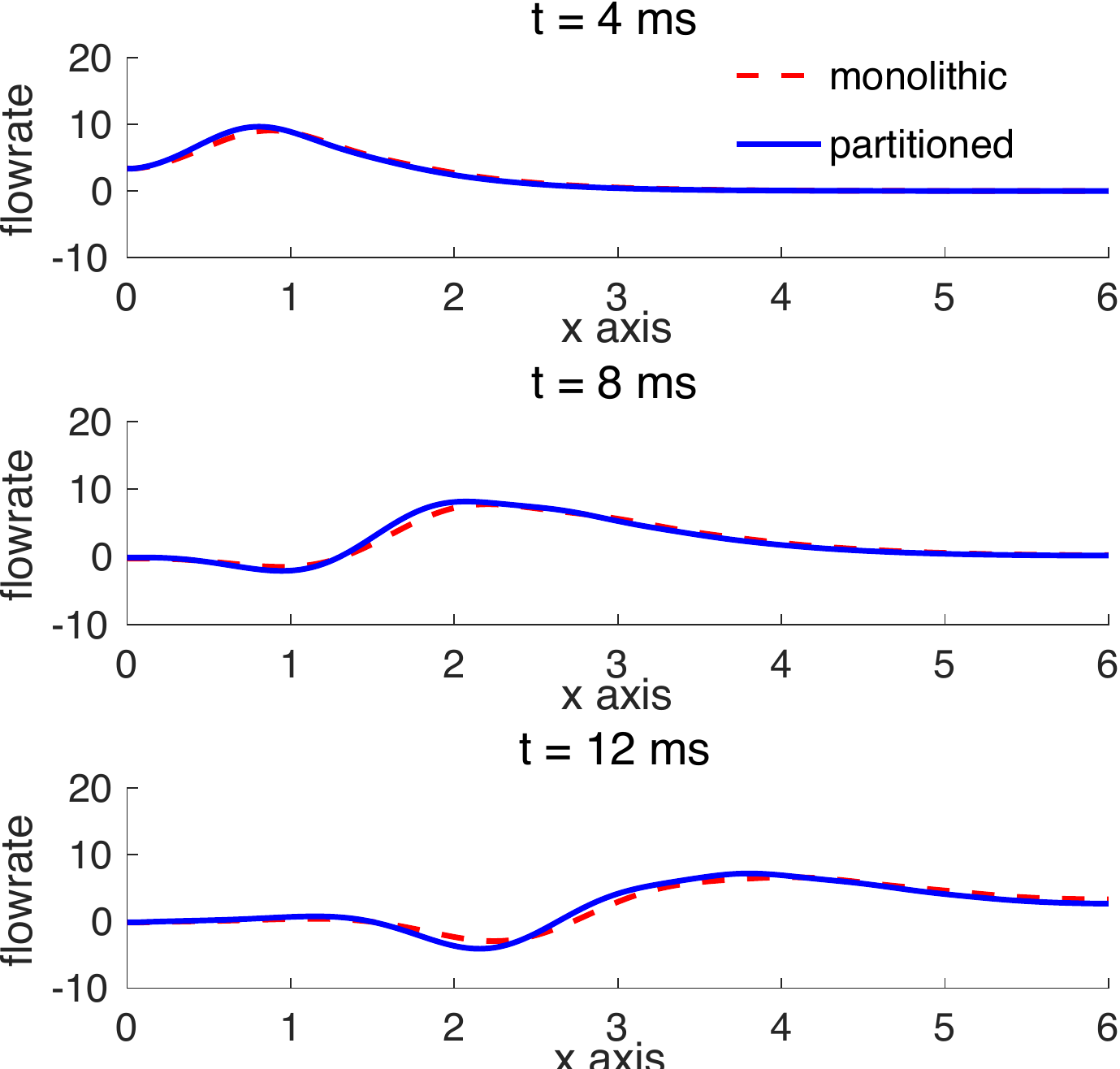}
        }
        \caption{Fluid flowrate vs. x-axis compared with a monolithic scheme.}
        \label{monolithiccomparisonF}
\end{figure}
\begin{figure}[h!]
    \centering{
        \includegraphics[scale=0.55]{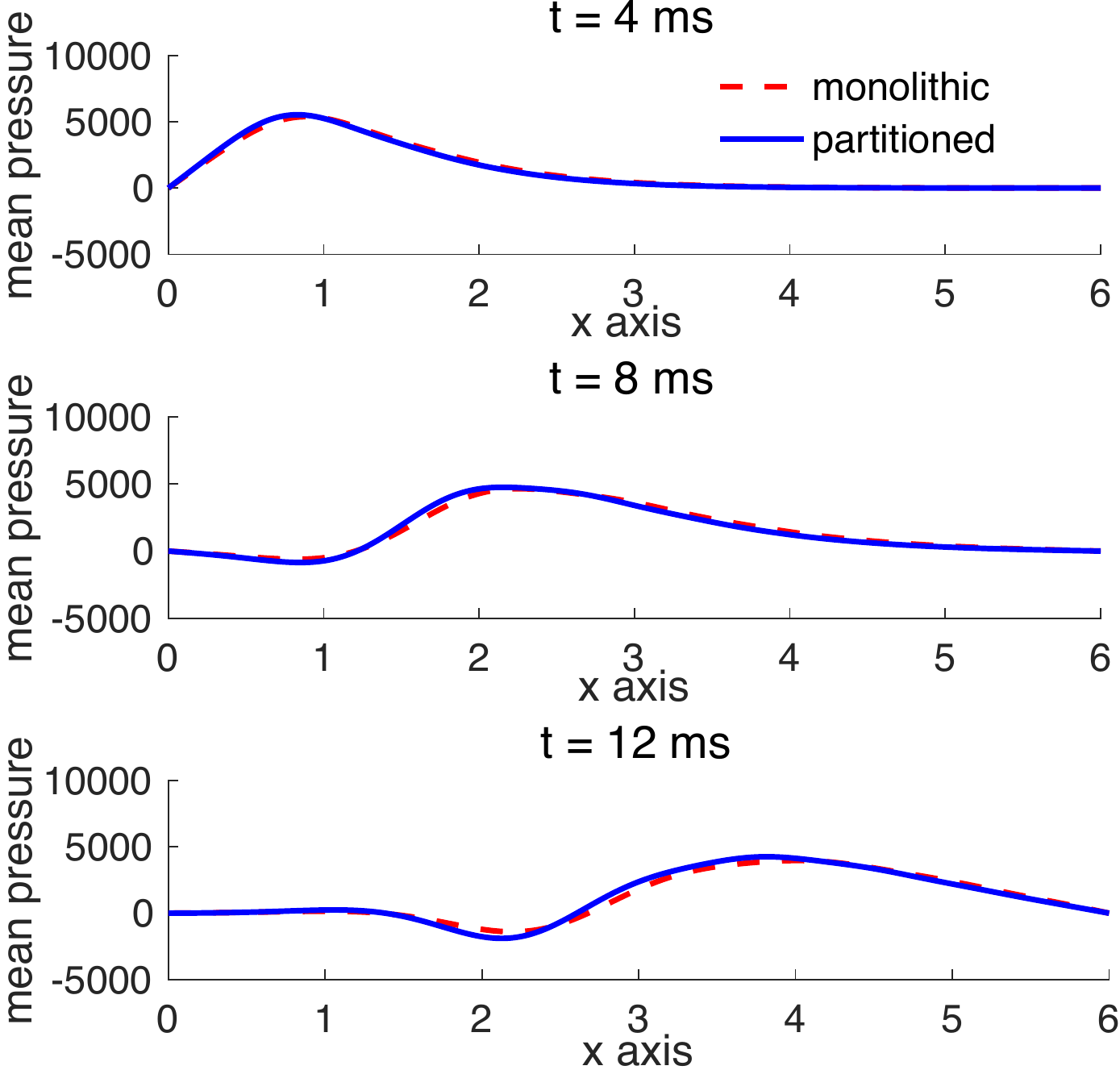}
        }
        \caption{Fluid pressure vs. x-axis compared with a monolithic scheme.}
        \label{monolithiccomparisonP}
\end{figure}

\begin{figure}[h!]
    \centering{
        \includegraphics[scale=0.55]{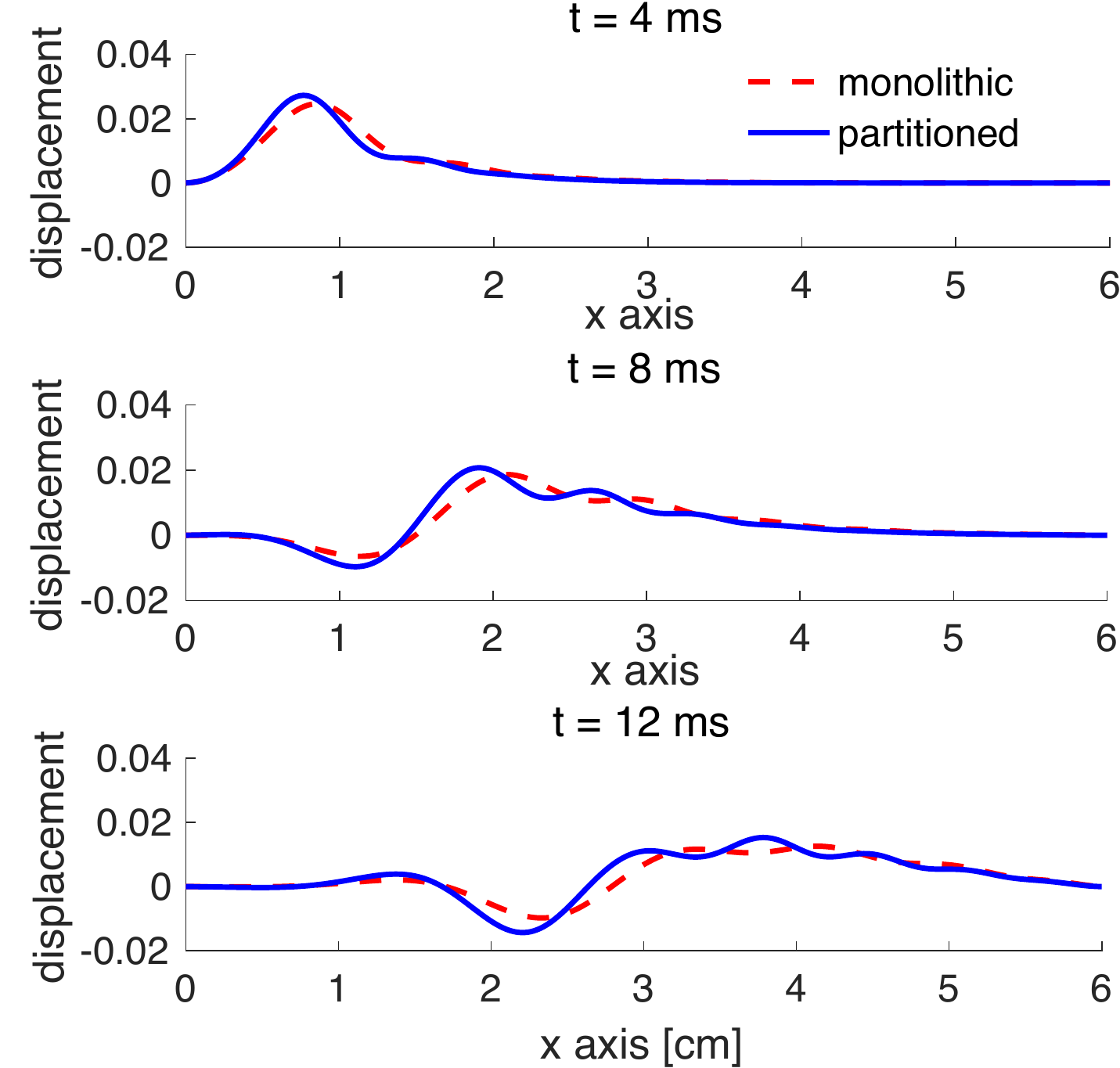}
        }
        \caption{Fluid-structure interface displacement vs. x-axis compared with a monolithic scheme.}
        \label{monolithiccomparisonD}
\end{figure}

Figures~\ref{monolithiccomparisonF},~\ref{monolithiccomparisonP} and~\ref{monolithiccomparisonD} show a comparison of the flowrate, mean pressure and fluid-structure interface displacement obtained using the proposed numerical method and a monolithic scheme used in~\cite{bukavc2014modular,quaini2009algorithms} at times $t=4, 8,$ and 12 ms. A good agreement is observed in all cases, even with small discrepancies in the interface displacement. We note that the time step used in the simulations obtained with a monolithic solver is $\Delta t=10^{-4}$. As expected, due to the splitting error, a smaller time-step was needed in the partitioned scheme.

\section{Conclusions}
We present a novel partitioned, non-iterative method for FSI problems with thick structures. The presented method is based on generalized Robin boundary conditions, which are designed by linearly combining kinematic and dynamic coupling conditions using a combination parameter, $\alpha$. 
Thanks to a novel design of Robin boundary conditions used in the fluid and structure sub-problems, we prove unconditional stability of the semi-discrete numerical method applied to a  moving domain FSI problem.  Convergence analysis was performed for a fully-discrete, linearized problem, yielding $\mathcal{O} (\Delta t^{\frac12})$ accuracy in time and optimal accuracy in space.  
The theoretically obtained results are verified in numerical examples.   In particular, using the method of manufactured solutions, we compute the relative errors between the numerical and exact solutions on both fixed domain and moving domain problems. In particular, we compute the convergence rates for different values of the combination parameter $\alpha$, and note that increasing values of $\alpha$ will lead to a decrease of convergence rates from one to 0.5 for a fixed $\Delta t$. We also compare our results to the ones obtained using a monolithic scheme on a benchmark problem of pressure propagation in a two-dimensional channel,  obtaining a good agreement. However, due to the splitting error and sub-optimal accuracy, a smaller time step was used in the partitioned scheme. 
 An extension of the proposed method to higher-order accuracy will be considered in our future work. 
 
 {\color{black}
 One of the drawbacks of the proposed method is its dependence on the combination parameter $\alpha$,  
which is, generally, problem dependent. In other work where similar combination parameters are introduced, such as~\cite{gerardo2010analysis}, the authors suggest to use
\begin{gather}
\alpha = \frac{\rho_S H_S}{\Delta t} + \beta H_S \Delta t, 
\label{alphaformula}
\end{gather}
where $H_S$ is the height of the solid domain and 
$$\beta=\frac{E}{1-\nu^2}(4\rho_1^2 - 2(1-\nu)\rho_2^2),$$
with $E$ denoting the Young's modulus, $\nu$ denoting the Poisson's ratio and  $\rho_1$ and $\rho_2$ denoting the mean and Gaussian curvatures of the fluid-structure interface, respectively. However, this choice of $\alpha$ is proposed to ensure convergence of a subiterative solution procedure when solving strongly coupled  FSI problems. Since we do not need subiterations to achieve stability, we do not require similar conditions on $\alpha$. Indeed,   using~\eqref{alphaformula} to compute $\alpha$ in our method gives results that are not optimally accurate.  Therefore, $\alpha$ needs to be estimated separately for each  problem.

}

\section{Acknowledgments} 
This work was partially supported by NSF under grants DMS
 1619993 and  1912908, and DCSD 1934300. We would  like to thank Prof. Catalin Trenchea for helpful
discussions and suggestions.

\bibliographystyle{ieeetr}
\bibliography{FSIthickLinearMovingR1}
\end{document}